\definecolor{red}{rgb}{0.6,0,0}
\theoremstyle{plain}
\newtheorem{theorem}{Theorem}[section]
\newtheorem{corollary}[theorem]{Corollary}
\newtheorem{lemma}[theorem]{Lemma}
\newtheorem{claim}[theorem]{Claim}
\newtheorem{proposition}[theorem]{Proposition}
\newtheorem{definition-lemma}[theorem]{Definition-Lemma}
\newtheorem{defn}[theorem]{Definition}
\newtheorem{setting}[theorem]{Setting}
\newtheorem{remark}[theorem]{Remark}
\def\ideal#1.{I_{#1}}
\def\ring#1.{\mathcal {O}_{#1}}
\def\fring#1.{\hat{\mathcal {O}}_{#1}}
\def\proj#1.{\mathbb P(#1)}
\def\pr #1.{\mathbb P^{#1}}
\def\af #1.{\mathbb A^{#1}}
\def\Hz #1.{\mathbb F_{#1}}
\def\Hbz #1.{\overline{\mathbb F}_{#1}}
\def\pic#1.{\operatorname {Pic}\,(#1)}
\def\pico#1.{\operatorname{Pic}^0(#1)}
\def\picg#1.{\operatorname {Pic}^G(#1)}
\def\ner#1.{NS (#1)}
\def\rdown#1.{\llcorner#1\lrcorner}
\def\rup#1.{\ulcorner#1\urcorner}
\def\cone#1.{\operatorname {NE}(#1)}
\def\ccone#1.{\overline{\operatorname {NE}}(#1)}
\def\coef#1.{\frac{(#1-1)}{#1}}
\def\vit#1.{D_{\langle #1 \rangle}}
\def\mm#1.{\overline {M}_{0,#1}}
\def\H1#1.{H^1(#1,{\ring #1.})}
\def\ac#1.{\overline {\mathbb F}_{#1}}
\def\adj#1.{\frac {#1-1}{#1}}
\def\spn#1.{\overline{#1}}
\def\ses#1.#2.#3.{0\to #1\to #2\to #3 \to 0}
\def\pek#1.#2.{\Cal P^{#1}(#2)}
\def\plk#1.#2.{\Cal P^{\leq #1}(#2)}
\def\ev#1.{\operatorname{ev_{#1}}}
\def\bminv#1.{(\nu_1,s_1;\nu_2,s_2;\dots ;\nu_{#1},s_{#1};\nu_{r+1})}
\def\zinv#1.{(\nu_1,s_1;\nu_2,s_2;\dots ;\nu_{#1},s_{#1};0)}
\def\iinv#1.{(\nu_1,s_1;\nu_2,s_2;\dots ;\nu_{#1},s_{#1};\infty)}
\def\map#1.#2.{#1 \longrightarrow #2}
\def\rmap#1.#2.{#1 \dasharrow #2}
\def\emb#1.#2.{#1 \hookrightarrow #2}
\def\Supp{\operatorname{Supp}}
\def\dim{\operatorname{dim}}
\def\cod{\operatorname{codim}}
\def\deg{\operatorname{deg}}
\def\det{\operatorname{det}}
\def\N{\mathbb N}
\def\e{\Cal E}
\def\ZZ{\mathbb Z}
\def\e1{E_1}
\def\e2{E_2}
\def\OO{\mathcal O}
\newcommand\Q{{\mathbb{Q}}}
\newcommand\R{{\mathbb{R}}}
\begin{document}
\title{The Minimal Model Program for threefolds in characteristic five.}
\author{Christopher Hacon} 
\address{Department of Mathematics \\  
University of Utah\\  
Salt Lake City, UT 84112, USA}
\email{hacon@math.utah.edu}
\author{Jakub Witaszek} 
\address{Department of Mathematics \\  
University of Michigan\\  
Ann Arbor, MI 48109, USA}
\email{jakubw@umich.edu}

\begin{abstract}
We show the validity of the Minimal Model Program for threefolds in characteristic five.
\end{abstract}

\subjclass[2010]{14E30, 14J17, 13A35}
\keywords{Minimal Model Program, positive characteristic}
\maketitle



\section{Introduction}
One of the fundamental goals of algebraic geometry is to classify all algebraic varieties which, conjecturally, can be achieved by means of the Minimal Model Program (MMP). A major part of the MMP is now known to hold in characteristic zero (see \cite{bchm06}) and in the last few years substantial progress has been achieved in positive characteristic as well. Indeed, it has been shown that the program
is valid for surfaces over excellent base schemes (see \cite{tanaka12,tanaka16_excellent}) and for three-dimensional varieties of characteristic $p > 5$ (\cite{hx13}, see also \cites{ctx13,birkar13,BW14,GNT06,DW19}).

However, little is known beyond these cases and new phenomena discovered by Cascini and Tanaka (\cite{CT06PLT}) suggest that the low characteristic MMP is much more subtle. Moreover, in view of \cite{CTW15a}, it has become apparent that understanding the geometry of low characteristic threefolds is the most natural step towards tackling the MMP in higher dimensions. 

In \cite{HW18}, following some ideas of \cite{GNT06}, we shed some light on the geometry of threefolds in all characteristics $p \leq 5$. In particular, we show that the relative MMP can be run over $\mathbb{Q}$-factorial singularities and in families. As a consequence, we establish, among other things, inversion of adjunction, normality of plt centres up to a universal homeomorphism as well as the existence of Koll\'ar's components and dlt modifications. 

The goal of this article is to extend the Minimal Model Program for threefolds to characteristic $p=5$ in full generality. We believe that the methods developed in this paper will be useful in tackling the MMP for threefolds in characteristics $2$ and $3$ as well as the MMP in higher dimensions.

Our main result is the following.
\begin{theorem} \label{theorem:flips} Let $(X,\Delta)$ be a $\Q$-factorial dlt three-dimensional pair over a perfect field $k$ of characteristic $p=5$. If $f \colon X\to Z$ is a $(K_X+\Delta)$-flipping contraction, then the flip $f^+ \colon X^+\to Z$ exists.
\end{theorem}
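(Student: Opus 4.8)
The standard reduction, going back to \cite{hx13}, is to realize the flip as $X^+ = \Proj_Z \bigoplus_{m \ge 0} f_* \OO_X(m(K_X+\Delta))$, so the entire problem reduces to showing that this flip algebra is finitely generated over $\OO_Z$. Since the question is local on $Z$, I may assume $Z$ is affine and even, after taking completions, that $Z$ is the spectrum of a complete local ring (the usual descent arguments, or the techniques of \cite{HW18} on running the MMP over complete local rings, let one pass back and forth). First I would run the standard reductions: by replacing $(X,\Delta)$ with a dlt modification (which exists in characteristic $5$ by \cite{HW18}) and using special termination, it suffices to handle the case of a \emph{pl flip}, i.e.\ one may assume there is a component $S = \lfloor \Delta \rfloor$ with $-S$ and $-(K_X+\Delta)$ both $f$-ample. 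The point of this reduction is that the restriction $S \to f(S)$ is then again a birational contraction of a surface, where the MMP is already known over excellent bases by \cite{tanaka12,tanaka16_excellent}.

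**Restriction to the divisor and the key finiteness.** The heart of the argument is to compare the flip algebra $R = \bigoplus_m f_*\OO_X(\lfloor m(K_X+\Delta)\rfloor)$ with its restriction $R_S = \bigoplus_m f_*\OO_S(\lfloor m(K_S+\Delta_S)\rfloor)$, where $K_S + \Delta_S = (K_X+\Delta)|_S$ is the different. The restriction map $R \to R_S$ has kernel generated by a single element (the section cutting out $S$), so $R$ is finitely generated over $\OO_Z$ provided $R_S$ is — this is the Hacon--Xu restriction trick. Now $R_S$ is a (truncated) canonical-type algebra on the surface $S$, and finite generation of such algebras over an excellent base follows from the two-dimensional MMP together with base-point-freeness on surfaces (again \cite{tanaka12,tanaka16_excellent}). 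The subtlety special to characteristic $5$ — and the reason $p>5$ was needed in \cite{hx13} — is that one must know $S$ is normal, or at least normal \emph{up to a universal homeomorphism}, and that $(S,\Delta_S)$ is again of the right singularity type (klt/dlt after the homeomorphism), so that the adjunction machinery applies. Precisely this is supplied by \cite{HW18}: inversion of adjunction and normality of plt centres up to universal homeomorphism in characteristic $5$. Passing the finite generation of $R_S$ through a universal homeomorphism of $S$ is harmless because finite generation of a graded ring is insensitive to replacing it by a finite, purely inseparable extension in the relevant sense.

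**The main obstacle.** The crux, and the place where I expect essentially all the work to go, is the step ``restrict to $S$ and control the different'': one needs that after adjunction the boundary $\Delta_S$ still has the shape required to invoke the surface results, and — more delicately — one needs the lifting of sections from $S$ to $X$ to be surjective in high enough degree, or at least cofinally, so that the kernel-generated-by-one-element argument goes through. In characteristic zero this surjectivity is Kawamata--Viehweg vanishing; in characteristic $5$ it must be replaced by the $F$-singularity / Frobenius-splitting techniques adapted to the threefold setting, exactly as developed in \cite{HW18} (and ultimately relying on the bound $p \ne 2,3$ to guarantee enough $F$-regularity of the relevant surface pairs and of the cone singularities appearing on $S$). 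So the logical skeleton is: (i) reduce to a pl flip over a complete local base via dlt modifications and special termination; (ii) restrict the flip algebra to $S$; (iii) finite generation of the restricted algebra from surface MMP in characteristic $5$; (iv) descend finite generation from $S$ back to $X$, using the one-element kernel together with the section-lifting/$F$-splitting results of \cite{HW18}. Step (iv), specifically the section lifting in characteristic $5$, is the hard part.
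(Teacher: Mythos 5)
There is a genuine gap, and it sits exactly at the step you identify as ``the hard part.'' Your skeleton is essentially the Hacon--Xu argument for $p>5$: reduce to a pl-flip, restrict the flip algebra to $S$, and recover finite generation by lifting sections from $S$ via Frobenius-splitting techniques. But the lifting step requires $(\tilde S, B_{\tilde S})$ to be globally $F$-regular over $f(S)$ (equivalently, $(X,S+B)$ relatively purely $F$-regular), and this is precisely what fails in characteristic $5$: the different on the relevant exceptional curve can be $\frac{1}{2}P_1+\frac{2}{3}P_2+\frac{4}{5}P_3$, and $(\mathbb{P}^1,\frac{1}{2}P_1+\frac{2}{3}P_2+\frac{4}{5}P_3)$ is not globally $F$-regular when $p=5$ (see \cite{watanabe91} and Remark \ref{rem:surfaces_char_5}). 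The results of \cite{HW18} that you invoke (normality of plt centres up to universal homeomorphism, inversion of adjunction, relative MMP over $\Q$-factorial bases) do not repair this: they control the adjunction and the singularities of $S$, not the surjectivity of the restriction maps in high degree, which is where $F$-regularity is genuinely used in \cite{hx13}. Relatedly, finite generation of the restricted algebra is not a formal consequence of the surface MMP: the restricted algebra consists of \emph{images} of sections from $X$, and identifying it with a divisorial algebra on $S$ is exactly the lifting problem you cannot solve without $F$-regularity. So steps (iii)--(iv) of your plan do not go through as stated for $p=5$.

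The paper's actual route is built around circumventing this failure. Only very special sections are lifted: Theorem \ref{thm:lift_complements} produces an $m$-complement $(X,S+B^c)$, $m\in\{1,2,3,4,6\}$, by lifting Frobenius-stable sections (the $C^0$ machinery), which does not require relative pure $F$-regularity of $(X,S+B)$; in the non-$F$-regular case necessarily $m=6$. The key new surface input, Proposition \ref{proposition:no_klt_complements}, says that if $(\tilde S,B_{\tilde S})$ is not relatively $F$-regular then every $6$-complement is non-klt with a unique non-klt valuation; one then extracts this single place by a qdlt modification (Corollary \ref{cor:qdlt_modification}) and runs an auxiliary MMP over $Z$ whose steps are either divisorial contractions (which immediately yield the flip), flips admitting a qdlt $6$-complement (Proposition \ref{proposition:building_blocks_qdlt}, proved by a three-case analysis including an explicit pencil construction and Lemma \ref{lem:surface_char_5_with_special_complement}), or flops preserving qdlt-ness (Lemma \ref{lemma:flopping_dlt}); a negativity/discrepancy argument plus semiampleness results then identify the output with the flip (Theorem \ref{theorem:almost_flips}). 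Finally, the case of general dlt coefficients is reduced to standard coefficients by Birkar's induction on the number of non-standard coefficients, not by the direct pl-flip reduction alone. None of these ingredients appear in your proposal, and without them the characteristic-$5$ case is not reached.
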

Note that this result is known for $p>5$ by \cite{hx13,birkar13,GNT06}. As a corollary of Theorem \ref{theorem:flips}, we get the following results on the MMP in positive characteristic.
\begin{theorem}[Minimal Model Program with scaling] \label{thm:minimal_models} Let $(X,\Delta)$ be a three-dimensional $\Q$-factorial dlt pair over a perfect field $k$ of characteristic $p>3$ and let $f \colon X \to Z$ be a projective contraction. 
Then we can run an MMP with scaling for $K_X+\Delta$ over $Z$. If $K_X+\Delta$ is relatively pseudo-effective, then the MMP terminates with a log minimal model over $Z$. Otherwise, the MMP terminates with a Mori fibre space.
\end{theorem}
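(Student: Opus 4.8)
The plan is to reduce everything to the two pillars of the MMP: (1) the cone and contraction theorem for threefold dlt pairs in characteristic $p>3$, and (2) existence of flips, which is Theorem \ref{theorem:flips} for $p=5$ and is known by \cite{hx13,birkar13,GNT06} for $p>5$. Together with termination these formally yield the statement; the real content is packaging the known results and checking termination. First, since $(X,\Delta)$ is $\Q$-factorial dlt and $f\colon X\to Z$ is projective, pick an $f$-ample divisor $A$ (or more precisely a divisor $H$ such that $K_X+\Delta+H$ is $f$-ample and $(X,\Delta+H)$ is still dlt) to initialize the scaling. At each step, the relative cone theorem for threefolds — available in characteristic $p>3$ via the work of Keel, Koll\'ar--Kov\'acs, and the refinements in \cite{hx13}, \cite{BW14}, and the surrounding literature cited in the introduction — produces a $(K_X+\Delta)$-negative extremal ray $R$ over $Z$ with the nef threshold $\lambda = \inf\{t \geq 0 : K_X+\Delta+tH \text{ is nef}/Z\}$, and a contraction $g\colon X\to Y$ of $R$. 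If $g$ is a Mori fibre space (i.e.\ $\dim Y<\dim X$) we stop. If $g$ is divisorial, $Y$ is again $\Q$-factorial dlt of dimension three and we continue. If $g$ is a flipping contraction, we invoke the flip: Theorem \ref{theorem:flips} in characteristic five, and the cited results in characteristic $p>5$, guarantee $g^+\colon X^+\to Y$ exists, and $(X^+,\Delta^+)$ is $\Q$-factorial dlt of dimension three, so we continue with $X^+$ in place of $X$.

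The remaining point is termination. For termination of flips one runs the standard argument: special termination (which reduces infinite sequences of flips to the behavior on the divisorial part, using that dlt surface pairs have terminating MMP, hence only finitely many flips can meet any fixed divisor) combined with the fact that the MMP with scaling replaces the general termination problem by termination of a sequence of real numbers $1 \geq \lambda_1 \geq \lambda_2 \geq \cdots$. In dimension three this is classical once flips exist and special termination holds: either the sequence of scaling thresholds $\lambda_i$ stabilizes, in which case a single scaled pair $(X,\Delta+\lambda H)$ runs an MMP all of whose steps are $(K_X+\Delta+\lambda H)$-trivial and one concludes by Noetherian induction on the number of divisors plus special termination, or $\lambda_i \to \lambda_\infty$ and one again appeals to special termination for the limiting pair. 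Divisorial contractions can occur only finitely often since they strictly drop the Picard number. Hence the MMP terminates; if $K_X+\Delta$ is relatively pseudo-effective it cannot end with a Mori fibre space (the generic fibre would be covered by $(K_X+\Delta)$-negative curves, contradicting pseudo-effectivity over $Z$), so it ends with a log minimal model; otherwise it terminates with a Mori fibre space.

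I expect the main obstacle to be purely bibliographic/verificational rather than conceptual: one must confirm that the relative cone and contraction theorem for $\Q$-factorial dlt threefolds, together with special termination, are genuinely available in characteristic $p=5$ at the level of generality needed — in particular over an arbitrary (possibly non-projective) base $Z$ and for a perfect, not necessarily algebraically closed, field $k$. The cone theorem ingredients (base-point-freeness, Keel's results on semiampleness in positive characteristic, and the three-dimensional arguments of \cite{hx13,ctx13,BW14}) and special termination (which only uses the surface MMP over excellent bases, \cite{tanaka12,tanaka16_excellent}, and adjunction/inversion of adjunction for threefolds, partly from \cite{HW18}) are all in the literature in the required form, so this theorem follows formally from Theorem \ref{theorem:flips} together with a careful assembly of these inputs; no new idea beyond the construction of flips is needed.
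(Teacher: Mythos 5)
Your proposal is correct and matches the paper's approach: the paper proves Theorem \ref{thm:minimal_models} by observing that the arguments of \cite{birkar13} and \cite[Theorems 1.5 and 1.7]{BW14} (cone/contraction theorem, special termination, and the scaling argument) were established contingent only on the existence of flips with the relevant coefficients, so the statement follows immediately from Theorem \ref{theorem:flips} for $p=5$ and the known results for $p>5$. Your sketch simply spells out the content of those cited proofs, which is exactly the assembly the paper relies on.
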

In particular, Theorem \ref{thm:minimal_models} shows that Zariski's conjecture on finite generatedness of the canonical ring of smooth varieties is valid for threefolds in characteristic five.

\begin{theorem}[Base point free theorem]  \label{thm:bpf} Let $(X,\Delta)$ be a three\nobreakdash-\hspace{0pt}dimensional klt pair over a perfect field $k$ of characteristic $p>3$ and let $f \colon X \to Z$ be a projective contraction. Let $D$ be a relatively nef $\Q$-Cartier $\Q$-divisor such that $D-(K_X + \Delta)$ is nef and big over $Z$. Then $D$ is semi-ample over $Z$.
\end{theorem}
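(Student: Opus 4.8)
The plan is to deduce this from Theorem~\ref{theorem:flips} and Theorem~\ref{thm:minimal_models}, together with Keel's semi-ampleness criterion and the base point free (equivalently, abundance) theorem for log canonical surfaces over excellent schemes \cite{tanaka16_excellent}, by carrying out the argument already available for $p>5$ (see \cite{hx13,birkar13}); the only ingredient not previously available in characteristic five was the Minimal Model Program, now supplied by the preceding two theorems. Since semi-ampleness over $Z$ is local on $Z$ we may assume $Z$ is affine, and after running a suitable MMP we may replace $X$ by a small $\Q$-factorialisation and assume $X$ is $\Q$-factorial. Since $D-(K_X+\Delta)$ is nef and big over $Z$, a relative form of Kodaira's lemma gives $D-(K_X+\Delta)\sim_{\Q,Z}A_0+E_0$ with $A_0$ relatively ample and $E_0\ge 0$; replacing $\Delta$ by $\Delta+\delta E_0$ for $0<\delta\ll 1$ we may assume $H:=D-(K_X+\Delta)$ is relatively ample, and, after choosing a general $0\le B\sim_{\Q}\tfrac{1}{2}H$, that $D\sim_{\Q,Z}K_X+\Gamma+A$ with $\Gamma:=\Delta+B$ klt and $A:=\tfrac{1}{2}H$ relatively ample. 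Finally we may assume $D$ is not relatively ample, since otherwise there is nothing to prove.

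Suppose first that $D$ is big over $Z$. Then by Keel's theorem it suffices to show that $D$ is semi-ample over $Z$ on its null locus $\mathbb{E}(D)$, which has dimension at most two. Using that $H$ is relatively ample, one produces an effective divisor $G\sim_{\Q}\epsilon H$ with $0<\epsilon\ll 1$ such that $(X,\Delta+G)$ is log canonical and every component of $\mathbb{E}(D)$ is contained in a log canonical centre; passing to a dlt modification and applying adjunction, one obtains $D|_{S}\sim_{\Q}K_{S}+\Delta_{S}+N_{S}$ with $N_{S}$ relatively semi-ample on the log canonical surfaces $S$ that arise, and the problem reduces, after a routine but somewhat involved case analysis, to \cite{tanaka16_excellent} together with the fact that a numerically trivial line bundle on a projective curve of arithmetic genus zero is torsion. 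If instead $D\equiv 0$ over $Z$, then $-(K_X+\Delta)=H-D$ is relatively ample, so $(X,\Delta)$ is relatively of log Fano type over $Z$; spreading out over a finitely generated $\mathbb{F}_p$-algebra and specialising to a closed point over $\overline{\mathbb{F}}_p$, where $\operatorname{Pic}^{\tau}$ of the special fibre is finite, one deduces that $D$ is torsion over $Z$, hence semi-ample over $Z$.

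The remaining case $0<\nu(D)<\dim X$ is the heart of the matter. Here the aim is to realise $D$ as the pullback $g^{*}D_T$ of a relatively nef $\Q$-divisor under a contraction $g\colon X'\to T$ over $Z$, where $X'$ is a suitable birational model of $X$ and $\nu(D)\le\dim T<\dim X$, and then to conclude by induction on dimension: $D_T$ is big, hence ample, on the curve $T$ when $\dim T=1$, while when $\dim T=2$ one applies the analysis of the preceding paragraph on the surface $T$, using a canonical bundle formula for the conic-bundle type fibration $g$---with semi-ample moduli part---to transport a log canonical structure to $T$. To produce $g$, I would run the $(K_X+\Gamma)$-MMP over $Z$ with scaling of $A$: since $K_X+\Gamma+A=D$ is relatively nef the scaling threshold equals $1$ at the first step, and for as long as it remains $1$ each contracted ray is $D$-trivial, so the strict transform of $D$ remains relatively nef, and by Theorem~\ref{thm:minimal_models} one eventually arrives at a Mori fibre space $g\colon X'\to T$ whose contracted ray is $D$-trivial. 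The main obstacle is precisely this intermediate-dimension case, and within it two points where the low-characteristic pathologies can intervene: controlling the strict transform of $D$ should the scaling threshold ever drop below $1$, where flips might a priori destroy its relative nefness; and, more seriously, establishing in characteristic five a canonical bundle formula with semi-ample moduli part for the fibration $g$ over a surface---for fibres of dimension one this concerns only the moduli of curves of genus $\le 1$, so I expect it to remain tractable, but it is the delicate point.
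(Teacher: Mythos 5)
There is a genuine gap. The paper does not reprove this statement at all: as recorded in the text after the proof of Theorem \ref{theorem:flips}, Theorem \ref{thm:bpf} ``follows by exactly the same proof as \cite[Theorem 1.2]{BW14}'', because the arguments of \cites{birkar13,BW14} were contingent only on the existence of flips with standard coefficients, which is now supplied by Theorem \ref{theorem:flips}; apart from flips, those proofs already work for $p>3$. Your proposal instead attempts to reconstruct the argument from scratch, and the reconstruction is not closed: in the central case $0<\nu(D)<\dim X$ you reduce to a fibration $g\colon X'\to T$ over a surface and then invoke ``a canonical bundle formula for the conic-bundle type fibration $g$ --- with semi-ample moduli part'' in characteristic five, which you yourself flag as the delicate point and do not prove. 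No such formula is established in this paper, and it is not available in the literature at $p=5$ (the known positive-characteristic canonical bundle formula results require large $p$ and are exactly the kind of statement that fails or is open in low characteristic, cf.\ the wild-fibre phenomena for genus-one fibrations). The argument you are trying to emulate does not need it: in \cite{BW14} the divisor is descended along the $D$-trivial Mori fibre space via the contraction theorem and semi-ampleness on the lower-dimensional base is obtained directly from the surface/curve results, not by transporting a log structure through a canonical bundle formula. So as written the heart of your proof is missing.

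Two smaller points. Your worry about the scaling coefficient dropping below $1$ is spurious at the first step (if the threshold were $<1$, then $D=(K_X+\Gamma+tA)+(1-t)A$ would be relatively ample, which you have excluded) and is handled at later steps by the standard $D$-trivial MMP lemma: on a ray with $(K+\Gamma)\cdot R<0$ and $D\cdot R=0$, the ampleness of $D-(K+\Gamma)$ lets $D$ descend through the contraction, so its strict transform stays relatively nef. On the other hand, your treatment of the relatively numerically trivial case (spreading out and specialising to a closed point over $\overline{\mathbb F}_p$ to conclude $D$ is torsion over $Z$) is only a gesture: torsionness over the original perfect field and over the base $Z$ does not follow from finiteness of $\operatorname{Pic}^{\tau}$ of one special fibre without further argument. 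Given that the intended route in the paper is simply to quote \cite{BW14} once flips exist, the efficient fix is to do exactly that rather than to supply the missing characteristic-five canonical bundle formula.
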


\begin{theorem}[Cone theorem] \label{thm:cone}
Let $(X,\Delta)$ be a projective $\Q$-factorial three-dimensional dlt pair over a perfect field $k$ of characteristic $p>3$. Then there exists a countable number of rational curves $\Gamma_i$ such that
\begin{itemize}
	\item $\overline{\mathrm{NE}}(X) = \overline{\mathrm{NE}}(X)_{K_X+\Delta \geq 0} + \sum_i \R[\Gamma_i]$,
	\item $-6 \leq (K_X+\Delta) \cdot \Gamma_i < 0$,
	\item for any ample $\R$-divisor $A$
		\[
			(K_X+\Delta+A) \cdot \Gamma_i \geq 0
		\]
		holds for all but a finitely many $\Gamma_i$, and so
	\item the rays $\R[\Gamma_i]$ do not accumulate inside $\overline{\mathrm{NE}}(X)_{K_X+\Delta<0}$.
\end{itemize}
\end{theorem}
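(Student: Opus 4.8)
The plan is to run the classical derivation of the cone theorem from the existence of flips (Theorem~\ref{theorem:flips}), the MMP with scaling (Theorem~\ref{thm:minimal_models}) and the base point free theorem (Theorem~\ref{thm:bpf}); this is the same scheme that produces the cone theorem for $p>5$ in \cite{hx13,birkar13,GNT06}, so the real content is that these three inputs are now available for all $p>3$ and that the steps which in characteristic zero use Kawamata--Viehweg vanishing can be rerouted through them. Everything reduces to two ingredients: bend-and-break, which yields the rational curves $\Gamma_i$ together with the bound $0<-(K_X+\Delta)\cdot\Gamma_i\le 2\dim X=6$, and the rational polyhedrality of $\overline{\mathrm{NE}}(X)$ in the locus $\{K_X+\Delta<0\}$, for which the MMP is used. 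Since Theorem~\ref{thm:bpf} is phrased for klt pairs, for the dlt pair $(X,\Delta)$ one uses --- or proves by the same argument --- the corresponding base point free and contraction statements for $\mathbb{Q}$-factorial dlt pairs.

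Bend-and-break works in every characteristic --- indeed it was first carried out in positive characteristic by means of the Frobenius --- so for any curve $C$ with $(K_X+\Delta)\cdot C<0$ there is a rational curve $\Gamma$ through a point of $C$ with $0<-(K_X+\Delta)\cdot\Gamma\le 2\dim X$, where $\mathbb{Q}$-factoriality and dlt-ness ensure the boundary only improves the estimate; one may invoke the characteristic-free treatment of this material used in \cite{hx13,birkar13} after passing to a dlt or log resolution. Taking for $\{\Gamma_i\}$ the rational curves with $0<-(K_X+\Delta)\cdot\Gamma_i\le6$ that span extremal rays of $\overline{\mathrm{NE}}(X)$ immediately gives the second bullet. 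For a fixed ample $\R$-divisor $A$, if such a ray $R=\R_{\ge0}[\Gamma_i]$ satisfies $(K_X+\Delta+A)\cdot R<0$ then $0<A\cdot\Gamma_i<-(K_X+\Delta)\cdot\Gamma_i\le6$, so these curves have bounded $A$-degree and hence lie in finitely many numerical classes; thus only finitely many such rays meet $\{(K_X+\Delta+A)<0\}$, which is the third bullet, and the fourth follows since an accumulation ray inside $\{K_X+\Delta<0\}$ would lie in $\{(K_X+\Delta+A)<0\}$ for a suitable ample $A$.

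It remains to prove the cone equality, and the crux is that $\overline{\mathrm{NE}}(X)_{(K_X+\Delta+A)<0}$ is a rational polyhedral cone for every ample $\R$-divisor $A$, each of whose rays is a $(K_X+\Delta)$-negative extremal ray of $\overline{\mathrm{NE}}(X)$ admitting a contraction. Granting this, put $W=\overline{\mathrm{NE}}(X)_{K_X+\Delta\ge0}+\sum_i\R_{\ge0}[\Gamma_i]$; the discreteness above makes $W$ closed, and if $W\ne\overline{\mathrm{NE}}(X)$ one separates the two by a nef $\Q$-divisor $D$ which is positive on $\overline{\mathrm{NE}}(X)_{K_X+\Delta\ge0}\setminus\{0\}$ and on every $\Gamma_i$ but vanishes on a nonzero face $F$ with $(K_X+\Delta)|_F<0$; by the rational polyhedrality applied to an appropriate $A$, the face $F$ must contain one of the rays $\R_{\ge0}[\Gamma_i]$, contradicting $D\cdot\Gamma_i>0$. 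The rational polyhedrality statement is the main obstacle: in characteristic zero it is the rationality and contraction theorems, proved with vanishing, and here one argues instead via the MMP. Fixing $A$ and running a $(K_X+\Delta)$-MMP with scaling of $A$ (Theorem~\ref{thm:minimal_models}), the scaling thresholds are a priori real, but termination places us on models where the relevant limiting adjoint divisor is nef and big, so the base point free theorem (Theorem~\ref{thm:bpf}) applies, shows the thresholds are rational with bounded denominators, and yields the contractions of the corresponding faces; assembling these produces the polyhedral structure. Throughout, the recurrent difficulty is the characteristic-$5$ pathology of the surfaces appearing in the MMP --- non-normal or wildly ramified log canonical centres --- which is exactly what the methods of \cite{HW18} behind Theorems~\ref{theorem:flips} and~\ref{thm:bpf} were developed to control.
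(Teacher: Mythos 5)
The paper does not reprove this statement: its proof is the observation that the argument of \cite[Theorem 1.1]{BW14} used the existence of flips with standard coefficients as its only missing ingredient in characteristic five, so it now applies verbatim thanks to Theorem \ref{theorem:flips}. Your proposal instead reruns the characteristic-zero scheme, and its two load-bearing steps have genuine gaps. First, the second bullet: you claim that bend-and-break, ``after passing to a dlt or log resolution'', yields for every $(K_X+\Delta)$-negative curve a rational curve $\Gamma$ with $0<-(K_X+\Delta)\cdot\Gamma\le 2\dim X$, the boundary ``only improving the estimate''. This does not work as stated. On a log resolution $\pi\colon Y\to X$ one has $K_Y+\Delta_Y=\pi^*(K_X+\Delta)+E$ with $E$ exceptional of uncontrolled sign against the relevant curves, so a $(K_X+\Delta)$-negative curve need not be $K_Y$-negative and Mori's deformation estimate (which requires smoothness) does not apply to the pair directly; moreover bend-and-break only produces some rational curve through a point, not one whose class spans the given extremal ray, and the degree of its image is not bounded by $2\dim X$ without further work. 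Even in characteristic zero this bullet is Kawamata's length-of-extremal-rays theorem, proved with vanishing and the contraction theorem; in characteristic $p$ the known proof --- the one the paper invokes --- obtains the curves and the bound $-6$ by contracting the ray via Keel-type semiampleness and analysing the resulting Mori fibre space, divisorial or flipping contraction, using the surface cone theorem and the geometry of fibres, not by naive bend-and-break.

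Second, the cone equality. Your polyhedrality argument wants to apply Theorem \ref{thm:bpf} to $K_X+\Delta+\lambda A$ at the nef threshold, but that theorem is stated for $\Q$-Cartier $\Q$-divisors, and the rationality of $\lambda$ (with bounded denominator) is precisely what has to be proved; saying that ``termination places us on models where the limiting adjoint divisor is nef and big'' does not address this on $X$ itself, since after its first step the MMP with scaling lives on a different variety, and in any case it only exhibits the particular extremal rays it happens to contract, not the full face structure of $\overline{\mathrm{NE}}(X)_{K_X+\Delta+A<0}$ nor contractions of all of its faces; deducing polyhedrality this way needs something like finiteness of models on a polytope of boundaries, which is not among the available inputs. (Using Theorems \ref{thm:minimal_models} and \ref{thm:bpf} as black boxes is not formally circular, since the paper establishes them independently of Theorem \ref{thm:cone}, but note that in \cite{BW14} the cone and contraction theorems come first and feed into the MMP, so your direction of deduction also inverts the actual architecture.) The intended, and much shorter, proof is simply to note that \cite[Theorem 1.1]{BW14} is contingent only on the existence of flips, now supplied by Theorem \ref{theorem:flips}.
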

The above results, in this generality, were proven in \cites{birkar13,BW14} (cf.\ \cite{keel99,hx13,ctx13}) contingent upon the existence of flips with standard coefficients. Hence, they follow immediately from Theorem \ref{theorem:flips}. There are many other results around the MMP (cf.\ \cite{birkar13, BW14,waldronlc,GNT06,HNT17}) that generalise to characteristic five in view of Theorem \ref{theorem:flips}.

\subsection{The idea of the proof of Theorem \ref{theorem:flips}}
For simplicity, we suppose in this subsection that the divisorial centres of the dlt pairs we consider are normal. This is not far from the truth, as these divisorial centres are normal up to a universal homeomorphism (see \cite[Theorem 1.2]{HW18}). 

By the same argument as in \cite[Theorem 6.3]{birkar13} we can suppose that the coefficients of $\Delta$ are standard. By perturbation and reduction to pl-flips, we can assume that $\Delta = S +B$ and $(X,S+B)$ is plt, where $S$ is an irreducible divisor. Let $f \colon X \to Z$ be a pl-flipping contraction. The proof of the existence of flips for threefolds in characteristic $p>5$ (\cite{hx13}) consists of two steps:
\begin{enumerate}
	\item showing that the flip of $f$ exists if $(X,S+B)$ is relatively purely F-regular, and
	\item showing that $(X,S+B)$ is relatively purely F-regular when $p>5$.
\end{enumerate}
The first step holds in every characteristic. Unfortunately, the second statement is false  for $p \leq 5$ in general. To circumvent this problem we construct pl-flips by a mix of blow-ups, contractions, and pl-flips admitting dlt $6$-complements.  
\begin{proposition} \label{proposition:building_blocks} (cf. Proposition \ref{proposition:building_blocks_qdlt}) Let $(X,S+B)$ be a $\Q$-factorial plt three-dimensional pair with standard coefficients over a perfect field $k$ of characteristic $p>3$,  where $S$ is an irreducible divisor, and let $f \colon X\to Z$ be a pl-flipping contraction. Assume that there exists a dlt $6$-complement $(X,S+B^c)$ of $(X,S+B)$. Then the flip $f^+ \colon X^+\to Z$ exists.
\end{proposition}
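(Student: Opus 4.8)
The plan is to reduce the existence of the flip to finite generation of a certain restricted algebra on the divisor $S$, following the Hacon--Xu strategy, and then exploit the dlt $6$-complement to gain the positivity needed to run that argument in characteristic five. More precisely, write $L = -(K_X+S+B)$, which is relatively ample over $Z$ by assumption, and after scaling we may take $L$ to be a relatively very ample Cartier divisor. The flip $f^+$ exists if and only if the relative section ring $R(X/Z, L) = \bigoplus_{m \ge 0} f_*\OO_X(mL)$ is a finitely generated $\OO_Z$-algebra; by adjunction and the usual restriction exact sequence it suffices to show that the restricted algebra $R_S(X/Z, L) := \operatorname{Im}\bigl(R(X/Z, L) \to R(S/T, L|_S)\bigr)$ is finitely generated, where $T = f(S)$ and $f|_S \colon S \to T$ is the induced contraction. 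This is exactly the first step referred to in the excerpt, which the authors state holds in every characteristic, so I would invoke it directly.

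Next I would analyze the restricted algebra on the surface $S$. By inversion of adjunction for plt pairs (available in characteristic $p \le 5$ by \cite[Theorem 1.2]{HW18}), $(S, B_S)$ is klt, where $K_S + B_S = (K_X + S + B)|_S$ via adjunction, and $-(K_S+B_S) = L|_S$ is relatively ample over $T$. The key point is to show that the restricted algebra coincides, up to truncation, with $R(S/T, D_S)$ for a suitable divisor $D_S$ with $D_S \equiv_T L|_S$ and $(S, D_S)$ klt; then finite generation follows from the MMP for surfaces over excellent bases (\cite{tanaka12, tanaka16_excellent}) together with the base point free theorem on surfaces. To pin down $D_S$ one runs a $(K_S + \cdot)$-MMP over $T$; the obstruction is controlling the difference between the divisor read off from the complement and the divisor controlling the restricted algebra. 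Here is where the dlt $6$-complement $(X, S+B^c)$ enters: restricting the complement to $S$ produces a dlt (in particular, after the universal homeomorphism, slc/klt-type) $6$-complement $(S, B^c_S)$ of $(S, B_S)$, and $6(K_S + B^c_S) \sim 0$ over $T$. This integral, torsion linear equivalence is precisely what lets us replace the $\R$-divisor $B_S$ by a $\Q$-divisor with bounded denominators and extract semi-ampleness from relative nefness, closing the gap that in characteristic $p > 5$ was bridged by relative pure F-regularity.

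Concretely, the argument would run: (1) pass to a dlt modification / small $\Q$-factorialization so that we may assume $(S, B_S)$ is $\Q$-factorial; (2) using the complement, produce a $\Q$-divisor $\Theta_S$ on $S$ with coefficients in $\frac{1}{6}\ZZ \cap [0,1]$ such that $K_S + \Theta_S \sim_{\Q, T} 0$ and $\Theta_S \ge B_S$ suitably, so that the sections of $mL|_S$ are governed by $\Theta_S$; (3) run a $(K_S + B_S)$-MMP over $T$ — which terminates by Tanaka's results — ending with a model on which $-(K_S + B_S)$, equivalently $L|_S$, is semi-ample over $T$, using the base point free theorem for surfaces and the fact that the complement forces the relevant multiple to be Cartier and nef; (4) deduce that $R(S/T, L|_S)$, and hence the restricted algebra $R_S(X/Z, L)$, is finitely generated; (5) conclude by the first step that $R(X/Z, L)$ is finitely generated and $f^+ = \operatorname{Proj}_Z R(X/Z, L)$ exists.

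I expect the main obstacle to be step (2)--(3): matching the divisor coming from the $6$-complement with the divisor that actually controls the restricted sections, and verifying that the MMP on $S$ can be run in a way compatible with the original contraction $f$ — in particular that the semi-ampleness obtained downstairs lifts to finite generation upstairs without losing track of the plt structure of $(X, S+B)$. A related subtlety is that $S$ itself may only be normal up to a universal homeomorphism in characteristic five, so one must work with the normalization $S^\nu$ and check that finite generation is insensitive to this universal homeomorphism (it is, since it induces an isomorphism on function fields up to a power of Frobenius, hence an isomorphism of section rings after truncation). Modulo these bookkeeping issues, everything else is an application of results already available: the first step of Hacon--Xu, inversion of adjunction and dlt modifications from \cite{HW18}, and the two-dimensional MMP of Tanaka.
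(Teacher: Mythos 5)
There is a genuine gap at the heart of your steps (2)--(4). Finite generation of $R(S/T, L|_S)$ is not the difficulty -- on a surface over an affine base with $L|_S$ relatively ample this is essentially immediate -- and it does not imply finite generation of the restricted algebra $R_S(X/Z,L)$, which is only the \emph{image} of the restriction map: a subalgebra of a finitely generated algebra need not be finitely generated, and you cannot argue from the source either, since finite generation of $R(X/Z,L)$ is exactly what is to be proved. The content of the Shokurov/Hacon--Xu strategy is to identify the restricted algebra with the section ring of a specific adjoint divisor on $S$, and that identification requires extending pluri-sections from $\tilde S$ to $X$. In characteristic zero this comes from vanishing-type extension theorems; in \cite{hx13} it is precisely what relative global F-regularity of $(\tilde S,B_{\tilde S})$ supplies via the trace map, and that F-regularity is the statement which fails for $p\le 5$ in general -- it is the very obstruction this paper is built to circumvent. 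Your proposal offers no substitute mechanism: the torsion relation $6(K_S+B^c_S)\sim_T 0$ coming from the complement controls singularities and Cartier indices on $S$, but by itself it does not lift a single section from $S$ to $X$. (Note also that the ``first step valid in every characteristic'' is Theorem \ref{thm:hx_flips}, i.e.\ the flip exists \emph{once} $(\tilde S,B_{\tilde S})$ is relatively globally F-regular; it is not the claim that finite generation of $R(S/T,L|_S)$ suffices. And restricting the dlt complement need not give a klt pair on $S$: the log canonical centres of $(X,S+B^c)$ may meet $S$, which is exactly the interesting situation.)

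The paper uses the $6$-complement in a genuinely different way, via a case division on the flipping locus $C$. If $(X,S+B^c)$ is plt near $C$, then $(\tilde S,B^c_{\tilde S})$ is a klt $6$-complement and Proposition \ref{proposition:no_klt_complements} shows that $(\tilde S,B_{\tilde S})$ \emph{is} relatively globally F-regular after all, so Theorem \ref{thm:hx_flips} applies (Proposition \ref{proposition:flip_case1}). If some $E\subseteq\lfloor B^c\rfloor$ meets $C$ with $C\cdot E\ge 0$, then $E|_{\tilde S}$ is non-exceptional and Lemma \ref{lem:surface_char_5_with_special_complement} again yields relative F-regularity (Proposition \ref{proposition:flip_case3}). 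Finally, if $C\cdot E<0$, the restricted-algebra machinery is bypassed entirely: then $\mathrm{Exc}(f)=S\cap E$, and a pencil spanned by $kS\sim_Z lE$ defines a map to $\mathbb{P}^1_Z$ whose normalised image gives the finite generation of $\bigoplus_m H^0(X,mS)$ directly, hence the flip (Proposition \ref{proposition:flip_case2}). So the complement is exploited either to recover the F-regularity hypothesis of \cite{hx13} or to construct the flip explicitly -- not, as in your outline, to compute the restricted algebra from surface semi-ampleness, a step for which no tool is available in characteristic five.
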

Let $C = \mathrm{Exc}(f)$. For simplicity, assume that $C$ is irreducible. We split the proof of this proposition into three cases:
\begin{enumerate}
\item $(X,S+B^c)$ is plt in a neighborhood of $C$, or
\item $C \cdot E < 0$ for a divisor $E \subseteq \lfloor B^c \rfloor$, or
\item $C \cdot E \geq 0$ for a divisor $E \subseteq \lfloor B^c \rfloor$ intersecting $C$.
\end{enumerate}
In Case (1), write $K_S + B_S = (K_X + S+B)|_S$ and 
\[
K_S + B^c_S = (K_X+S+B^c)|_S.
\]
Since $(X,S+B^c)$ is plt along $C$, we get that $(S,B^c_S)$ is klt along $C$. Our key observation is the following: \emph{if a
 birational log Fano contraction of a surface pair  with standard coefficients in characteristic $p>3$ admits a klt $6$-complement, then it is relatively F-regular} (Proposition \ref{proposition:no_klt_complements}). Therefore, $(X,S+B)$ is relatively purely F-regular by F-adjunction, and so the flip exists by \cite{hx13} (see the aforementioned Step (1)). This is the main part of our arguments which we are unable to generalise to characteristic three. On the other hand, one might expect some analogue of this statement to hold in higher dimensions for all bounded complements and $p \gg 0$. 

In Case (2) we can construct the flip explicitly as the closure of $X$ under the rational map defined by a pencil of sections spanned by $kS$ and $lE$ for some $k,l \in \N$ such that $kS \sim lE$.  

In Case (3), assume for simplicity that $S$ and $E$ are the only log canonical divisors of $(X,S+B^c)$. Then, we can show that $(X,S + B^c-\epsilon E)$ is relatively F-split over $Z$ for $0 < \epsilon < 1$ by F-adjunction applied to $S$ and $S\cap E$. In fact, with a bit more work one can show that it is relatively purely F-regular, and thus $(X,S+B)$ is so as well.  Hence the flip exists by \cite{hx13} as in Case (1).\\

In view of Proposition \ref{proposition:building_blocks}, it is important to construct complements of pl-flipping contractions. By standard arguments, $(S,B_S)$ admits an $m$-complement $(S,B_S^c)$ for $m \in \{1,2,3,4,6\}$, and the following result shows that we can lift it to an $m$-complement of $(X,S+B)$.
\begin{theorem} \label{thm:lift_complements} Let $(X,S+B)$ be a three-dimensional $\Q$-factorial plt pair with standard coefficients defined over a perfect field of characteristic $p>2$ and let $f \colon X \to Z$ be a flipping contraction such that 
$-(K_X+S+B)$ and $-S$ are $f$-ample. 

Then there exists an $m$-complement $(X,S+B^c)$ of $(X,S+B)$ in a neighbourhood of ${\rm Exc}\, f$ for some $m \in \{1,2,3,4,6\}$.
\end{theorem}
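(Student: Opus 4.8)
The plan is to restrict the pair to $S$, produce a complement of the restricted pair by the classical surface theory of complements, and then lift it back to $X$ by means of a relative vanishing theorem, with inversion of adjunction guaranteeing that the lifted boundary is genuinely log canonical.

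First, since $-S$ is $f$-ample, every curve contracted by $f$ intersects $S$ negatively and hence is contained in $S$; thus $C:=\mathrm{Exc}(f)\subseteq S$ and $g:=f|_S\colon S\to T:=f(S)$ is a proper birational morphism. Set $K_S+B_S:=(K_X+S+B)|_S$. By adjunction for plt pairs --- valid for threefolds in characteristic $p>2$, at least up to a universal homeomorphism of $S$, by \cite{HW18} --- the pair $(S,B_S)$ is klt with standard coefficients, and $-(K_S+B_S)$ is $g$-ample. The theory of complements for surface pairs, which holds in positive characteristic by standard arguments (boundedness of such surface singularities together with Kawamata--Viehweg vanishing for birational morphisms of surfaces, \cite{tanaka12}), then produces an $m$-complement $(S,B_S^c)$ of $(S,B_S)$ over $T$ with $m\in\{1,2,3,4,6\}$: so $(S,B_S^c)$ is log canonical, $m(K_S+B_S^c)\sim 0$ over $T$, and $\lfloor(m+1)B_S\rfloor\le mB_S^c$.

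Next I would lift this complement to $X$. Using that $-(K_X+S+B)$ is $f$-ample, I would choose an integral divisor $D$ on $X$, supported near $C$, with $D\sim_{\Q,Z}-m(K_X+S+B)-S$ modulo a correction by an effective boundary of coefficients in $[0,1)$, arranged --- after a small perturbation exploiting the $f$-ampleness of $-(K_X+S+B)$ and $-S$ and a suitable choice of $B_S^c$ --- so that $(D+S)|_S$ is linearly equivalent over $T$ to the effective divisor $mB_S^c-\lfloor(m+1)B_S\rfloor$ realizing the surface complement. Then $D-K_X$ is, modulo a boundary with coefficients $<1$, relatively ample and relatively big, so a relative vanishing theorem of Kawamata--Viehweg type gives $R^1f_*\mathcal O_X(D)=0$. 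Applying $f_*$ to
\[
0\to\mathcal O_X(D)\to\mathcal O_X(D+S)\to\mathcal O_S\big((D+S)|_S\big)\to 0
\]
shows that the section of $\mathcal O_S((D+S)|_S)$ cutting out $mB_S^c-\lfloor(m+1)B_S\rfloor$ lifts to a section of $\mathcal O_X(D+S)$; if $G\ge 0$ denotes its divisor of zeros, then $G|_S=mB_S^c-\lfloor(m+1)B_S\rfloor$, and the routine complement bookkeeping assembles $G$ with the integral correction into a boundary $B^c\ge B$ with $\lfloor(m+1)B\rfloor\le mB^c$ such that $m(K_X+S+B^c)\sim 0$ near $C$ and $(K_X+S+B^c)|_S=K_S+B_S^c$.

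It remains to see that $(X,S+B^c)$ is log canonical near $C$; since $C\subseteq S$, this can be checked in a neighbourhood of $S$, where it follows from inversion of adjunction (again \cite{HW18}) and the log canonicity of $(S,B_S^c)$. Thus $(X,S+B^c)$ is the sought $m$-complement of $(X,S+B)$ near $\mathrm{Exc}(f)$. The main obstacle is the relative vanishing $R^1f_*\mathcal O_X(D)=0$: in characteristic zero this is immediate from Kawamata--Viehweg vanishing, but in positive characteristic it can fail, so one must instead exploit that the fibres of $f$ are at most one-dimensional --- for instance by cutting $X$ down with a general hyperplane section through $\mathrm{Exc}(f)$ and reducing to Kawamata--Viehweg vanishing for birational morphisms of surfaces --- while controlling the singularities of that section; it is in this control, and in the behaviour of the different, that the hypothesis $p>2$ enters.
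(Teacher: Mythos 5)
Your skeleton (restrict to the divisor, produce a surface $m$-complement with $m\in\{1,2,3,4,6\}$, lift it, and conclude by inversion of adjunction via \cite{HW18}) agrees with the paper's, but the step you explicitly defer --- the surjectivity of $H^0(X,\mathcal{O}_X(D+S))\to H^0(S,\mathcal{O}_S((D+S)|_S))$ via $R^1f_*\mathcal{O}_X(D)=0$ --- is precisely the crux, and your proposed workaround does not close it. Kawamata--Viehweg-type relative vanishing is not available in characteristic $p>0$, and cutting with a general hyperplane through $\mathrm{Exc}(f)$ does not reduce the vanishing of an $R^1f_*$ on the threefold to a statement about birational morphisms of surfaces: the restriction sequence for such a hyperplane leaves you with $R^1$-terms of twists on $X$ itself, and you have no control over the singularities of the cut. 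More fundamentally, for $p\le 5$ the pair $(X,S+B)$ need not be relatively purely F-regular, so one cannot expect to lift \emph{arbitrary} sections from the divisor; this is exactly why the paper emphasises that only ``very special'' sections are lifted and why the result is delicate even when F-regularity techniques are available.

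The paper's proof avoids any vanishing theorem: restriction is surjective on the Frobenius-stable subspaces, $C^0_{\Lambda}(X,S+B;-m(K_X+S+B^*))\to C^0_{\Lambda_{\tilde S}}(\tilde S,B_{\tilde S};-m(K_{\tilde S}+B^*_{\tilde S}))$ (Lemma \ref{lem:integral_s0_adjunction}, which rests on the standard adjunction for $S^0$ and needs no $R^1$ computation). The price is that one must produce a surface complement cut out by a section lying in $C^0$, and that is the genuine content of Lemma \ref{lem:s0_complements_surfaces}: pass by Lemma \ref{lemma:surface_mmp_lemma} to a model $(W,C_W+B_W)$ with an irreducible curve $C$, restrict once more to $C\simeq\mathbb{P}^1$, and use that $(\mathbb{P}^1,\{(m+1)B_C\})$ is globally F-regular for the minimal $m$ (Lemma \ref{lem:gfr_p1_complements}; this is where $p>2$ actually enters, including a Fedder-criterion computation for $p=3$), so that on the curve $C^0=H^0$; the complement is then lifted from $C$ to $W$ and transported back through the birational maps by Lemmas \ref{lem:integral_s0_birational} and \ref{lem:integral_s0_grows}. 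Without this mechanism, or a genuine substitute for the vanishing you invoke, your argument has a gap at the lifting step. (Two minor points: you should work with the normalisation $\tilde S$, since $S$ is a priori only normal up to a universal homeomorphism, and the final log canonicity near $\mathrm{Exc}(f)$ is obtained by applying inversion of adjunction to $(X,S+(1-\epsilon)B^c)$, as in the paper.)
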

Although $(X,S+B)$ need not necessarily be relatively purely F-regular, we can still apply F-splitting techniques as we do not need to lift all the sections, but just some very special ones. Note that this result is new even for $p>5$.\\


In order to construct the flip of $f$ from the flips of Proposition \ref{proposition:building_blocks} we argue as follows. Let $(X,S+B^c)$ be an $m$-complement of $(X,S+B)$ for $m \in \{1,2,3,4,6\}$ which exists by Theorem \ref{thm:lift_complements}. Take a dlt modification $\pi \colon Y \to X$ of $(X,S+B^c)$ with an exceptional divisor $E$. Write $K_Y+S_Y + B_Y^c = \pi^*(K_X+S+B^c)$, $K_Y + S_Y+B_Y = \pi^*(K_X+S+B)$, and run a $(K_Y+S_Y+B_Y)$-MMP over $Z$. Note that it could happen that $B_Y$ is not effective, but we can rectify this situation by taking a linear combination of $B_Y$ and $B_Y^c$ (see the proof of Theorem \ref{theorem:flips} for details). By the negativity lemma, if this MMP terminates, then its output is the flip of $X$. Therefore, it is enough to show that all the steps of this MMP can be performed.

The first step of this MMP definitely exists. Indeed, either it is a divisorial contraction which can be shown to exist by {\cites{keel99,HW18}}, or it is a flipping contraction followed by a flip with a dlt $m$-complement which exists by Proposition \ref{proposition:building_blocks} (the $\mathbb Q$-divisor $B_Y$ may not have standard coefficients, so one needs to be a bit more careful; see the proof for details). However, each step of this $(K_Y+S_Y+B_Y)$-MMP is $(K_Y + S_Y + B^c_Y)$-relatively trivial, and so the dlt-ness of $(Y,S_Y+B^c_Y)$ need not be preserved.

To rectify this problem we employ the notion of qdlt singularities, that is log canonical pairs which are quotient singularities at log canonical centres. In fact, Proposition \ref{proposition:building_blocks} holds for qdlt flipping contractions (Proposition \ref{proposition:building_blocks_qdlt}) and we can show the existence of a qdlt modification $\pi \colon Y \to X$ with irreducible exceptional locus (Corollary \ref{cor:qdlt_modification}). Therefore, the output of any divisorial contraction in the $(K_Y+S_Y + B_Y)$-MMP is automatically the flip of $(X,S+B)$. Moreover, the qdlt-ness of $(Y,S_Y + B_Y^c)$ is preserved by flops (Lemma \ref{lemma:flopping_dlt}) except in one special case in which we can construct the flip of $(X,S+B)$ directly.

\section{Preliminaries}
A scheme $X$ will be called a variety if it is integral, separated, and of finite type over a field $k$. Throughout this paper, $k$ is a perfect field of characteristic $p>0$. We refer the reader to \cite{km98} for basic definitions in birational geometry and to \cite{HW17} for a brief introduction to F-splittings. We remark that in this paper, unless otherwise stated, if $(X,B)$ is a pair, then $B$ is a $\mathbb Q$-divisor. For two $\Q$-divisors $A$ and $B$, we denote by $A \wedge B$ the maximal $\Q$-divisor smaller or equal to both $A$ and $B$. We say that $(X,\Delta^c)$ is an \emph{$m$-complement} of $(X,\Delta)$ if $(X,\Delta^c)$ is log canonical, $m(K_X+\Delta^c) \sim 0$, and $\Delta^c \geq \Delta^*$, where $\Delta^* := \frac{1}{m}\lfloor (m+1)\Delta \rfloor$. If $\Delta$ has standard coefficients, then $\Delta^* = \frac{1}{m}\lceil m\Delta\rceil$, and so the last condition is equivalent to $\Delta^c \geq \Delta$. We say that a morphism $f \colon X \to Y$ is a \emph{projective contraction} if it is a projective morphism of quasi-projective varieties and $f_* \mathcal{O}_X = \mathcal{O}_Y$.

Since the existence of resolutions of singularities is not known in positive characteristic in general, the classes of singularities are defined with respect to all birational maps. For example, a log pair $(X, \Delta)$ is \emph{klt} if and only if the log discrepancies are positive for every birational map $Y \to X$. Similarly, \emph{log canonical centres} are defined as images of divisors of log discrepancy zero under birational maps $Y\to X$. These definitions coincide with the standard ones up to dimension three, as log resolutions of singularities are known to exist in this case.

The starting point for the construction of flips is the following result from \cite{hx13}. We say that a projective birational morphism $f \colon X \to Z$ for a $\Q$-factorial plt pair $(X,S+B)$, with $S$ irreducible, is a \emph{pl-flipping contraction} if $f$ is small, $-(K_X+S+B)$ and $-S$ are relatively ample, and $\rho(X/Z)=1$.
\begin{theorem}\label{thm:hx_flips} Let $(X,S+B)$ be a $\Q$-factorial three-dimensional plt pair defined over a perfect field of characteristic $p>0$ with $S$ irreducible. Let $f \colon X \to Z$ be a pl-flipping contraction. Let $g \colon \tilde S \to S$ be the normalisation of $S$ and write $K_{\tilde S} + B_{\tilde S} = (K_X + S + B)|_{\tilde S}$. If $(\tilde S, B_{\tilde S})$ is relatively globally F-regular over $f(S) \subseteq Z$, then the flip of $f$ exists.
\end{theorem}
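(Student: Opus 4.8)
The plan is to prove that the relative log canonical algebra
\[
R(X/Z,\,K_X+S+B) \;:=\; \bigoplus_{m\geq 0} f_*\OO_X\bigl(\lfloor m(K_X+S+B)\rfloor\bigr)
\]
is a finitely generated $\OO_Z$-algebra; since $-(K_X+S+B)$ is $f$-ample, $f$ is small and $\rho(X/Z)=1$, its relative $\Proj$ over $Z$ is then the flip $f^+\colon X^+\to Z$. As $X$ is $\Q$-factorial, we may pass to a Cartier multiple and work throughout with a Cartier divisor $D$ proportional to $K_X+S+B$.

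First I would reduce to the restricted algebra. Following the argument of \cite{hx13}, using $\rho(X/Z)=1$ and that $-S$ is $f$-ample, finite generation of $R(X/Z, D)$ is equivalent to finite generation of the restricted algebra
\[
\mathfrak R_\bullet \;:=\; \bigoplus_{m\geq 0}\mathrm{Image}\Bigl(f_*\OO_X(mD)\longrightarrow h_*\OO_{\tilde S}\bigl(mD|_{\tilde S}\bigr)\Bigr),
\]
where $h\colon \tilde S\to f(S)$ is the composite of the normalisation $g$ with $f|_S$. By adjunction $D|_{\tilde S}$ is the corresponding multiple of $K_{\tilde S}+B_{\tilde S}$; moreover $(\tilde S,B_{\tilde S})$ is klt (because $(X,S+B)$ is plt) and $-(K_{\tilde S}+B_{\tilde S})$ is $h$-ample. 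The full section algebra $\bigoplus_m h_*\OO_{\tilde S}(mD|_{\tilde S})$ is finitely generated, since relative section rings of $\Q$-divisors on surfaces are finitely generated in every characteristic via the surface minimal model program \cites{tanaka12,tanaka16_excellent}. Hence it suffices to show that the restriction maps $f_*\OO_X(mD)\to h_*\OO_{\tilde S}(mD|_{\tilde S})$ are surjective for every $m$, so that $\mathfrak R_\bullet$ coincides with this finitely generated algebra.

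The surjectivity of restriction is the heart of the proof and the only place the hypothesis enters. For each $e>0$ one has a commutative square relating the $e$-th Frobenius trace map of $X$, twisted by $\OO_X(mD)$, to the analogous twisted trace map of $\tilde S$; the vertical arrows come from restriction to $S$ together with the adjunction identification $(K_X+S+B)|_{\tilde S}\cong K_{\tilde S}+B_{\tilde S}$, which is available because $(X,S+B)$ is plt. Relative global F-regularity of $(\tilde S,B_{\tilde S})$ over $f(S)$ furnishes, for suitable $e$, a splitting of the $\tilde S$-side trace map after twisting by the pullback of an ample divisor; combined with relative Serre vanishing on $X$ (to annihilate the relevant $R^1f_*$ of the kernel sheaf $\OO_X(mD-S)$) this forces the restriction maps to be surjective, up to a bounded ample twist pulled back from $Z$ that does not affect finite generation of the algebra. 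Assembling these facts identifies $\mathfrak R_\bullet$ with the finitely generated surface section ring, hence $R(X/Z,K_X+S+B)$ is finitely generated and the flip exists.

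I expect the Frobenius lifting step to be the main obstacle: one must set up the trace-map diagram compatibly with adjunction and with the normalisation $g\colon\tilde S\to S$, and then extract from relative global F-regularity --- which a priori only gives a splitting locally over $f(S)$ --- a statement strong enough to lift all sections of $mD|_{\tilde S}$ uniformly in $m$, all the while controlling the ample twist so that it does not destroy the graded structure needed for finite generation. By comparison, the reduction to the restricted algebra and the finite generation of the surface section ring are routine, resting respectively on $\rho(X/Z)=1$ and on the surface minimal model program.
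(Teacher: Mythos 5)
Your proposal is correct and coincides with the argument the paper relies on: the paper's proof of this theorem is a one-line citation to \cite{hx13} (as explained in \cite[Remark 3.6]{HW18}), and that cited proof runs exactly as you outline --- reduction of the pl-flip to finite generation of the restricted algebra, identification of that algebra with the finitely generated surface section ring produced by the surface MMP, and lifting of sections via Frobenius trace maps plus Serre vanishing, using that relative global F-regularity of $(\tilde S,B_{\tilde S})$ is equivalent to relative pure F-regularity of $(X,S+B)$ by F-adjunction (which in particular gives normality of $S$ near the flipping locus). The only imprecision is minor: the Serre vanishing is applied not to $\OO_X(mD-S)$ itself (which is anti-ample over $Z$, so its $R^1f_*$ need not vanish) but to the Frobenius-twisted kernel $F^e_*\OO_X((1-p^e)(K_X+S+B)+mD-S)$, whose positivity grows with $e$, and surjectivity of restriction is only needed (and only holds) for sufficiently divisible $m$, which suffices after truncation.
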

Note that the condition on the relative global F-regularity of $(\tilde S, B_{\tilde S})$ is equivalent to the relative pure F-regularity of $(X,S+B)$ by F-adjunction.
\begin{proof}
This follows from \cite{hx13} as explained in \cite[Remark 3.6]{HW18}.
\end{proof}
\begin{remark} \label{rem:hx_flips_complements} By \cite[Theorem 3.1]{hx13} (cf.\ Proposition \ref{prop:hacon_xu_surfaces}) the above assumption on F-regularity is always satisfied when $p>5$  and $B$ has standard coefficients. 
\end{remark}

\subsection{Qdlt pairs}
Qdlt singularities will play an important role in this article.
\begin{defn}[{\cite[Definition 35]{dFKX}}] Let $(X,\Delta)$ be a log canonical variety.  We say that $(X,\Delta)$ is qdlt if for every log canonical centre $x \in X$ of codimension $k>0$, there exist distinct irreducible divisors $D_1, \ldots, D_k \subseteq \Delta^{=1}$ such that $x \in V := D_1 \cap \ldots \cap D_k$.
\end{defn}
\begin{remark} \label{remark:generic_point_of_stratum}
 Note that if $(X,\Delta)$ is log canonical and $x$ is a generic point of a stratum $V := D_1 \cap \ldots \cap D_k$ of $\Delta^{=1}$, then ${\rm codim}\, x = k$. Indeed, let $\tilde D_1 \to D_1$ be the normalisation of $D_1$. Then, by adjunction, $(\tilde D_1, \Delta_{\tilde D_1})$ is log canonical where $K_{\tilde D_1} + \Delta_{\tilde D_1} = (K_X+\Delta)|_{\tilde D_1}$. Moreover, by localising at generic points of $D_1 \cap D_l$ and using surface theory, we see that $D_l|_{\tilde D_1} \subseteq \Delta^{=1}_{\tilde D_1}$ have no mutually common components for $2 \leq l \leq k$. Therefore, $x$ is a generic point of $E_2 \cap \ldots \cap E_k$, where $E_l$ are some irreducible components of $D_l|_{\tilde D_1}$. Now the claim follows by induction.\end{remark}

By \cite[Proposition 34]{dFKX}, in characteristic zero the above definition of qdlt singularities is equivalent to saying that $(X,\Delta)$ is locally a quotient of a dlt pair by a finite abelian group preserving the divisorial centres. 
{In positive characteristic, we know the following.
\begin{lemma} Let $(X,\Delta)$ be a $\Q$-factorial qdlt pair of dimension $n \leq 3$ defined over a perfect field of characteristic $p>0$. Then
\begin{enumerate}
\item $(\tilde D, \Delta_{\tilde D})$ is qdlt, where $g \colon \tilde D \to D$ is the normalisation of a divisor $D \subseteq \Delta^{=1}$ and $K_{\tilde D} + \Delta_{\tilde D} = (K_X+\Delta)|_{\tilde D}$,
\item the strata of $\Delta^{=1}$ are normal up to a universal homeomorphism,
\item the log canonical centres of $(X,\Delta)$ coincide with the generic points of the strata of $\Delta^{=1}$.
\end{enumerate}
\end{lemma}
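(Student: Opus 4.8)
The plan is to induct on $n=\dim X$, handling the cases $n\le 2$ directly from the classification of log canonical curve and surface singularities (where, importantly, $\Q$-factoriality is not needed, so the statement is available for the not-necessarily-$\Q$-factorial surfaces that occur below). So assume $n=3$ and the lemma in dimension $\le 2$, and within dimension $n$ prove the assertions in the order (1), (3), (2). The external tools are: adjunction and inversion of adjunction for log canonical pairs, valid in dimension $\le 3$ since resolutions exist there, which give that the log canonical centres of $(\tilde D,\Delta_{\tilde D})$ are exactly the images in $D$ of the log canonical centres of $(X,\Delta)$ contained in $D$, with codimension dropping by one; Remark~\ref{remark:generic_point_of_stratum}, which says every irreducible component of an intersection of $k$ distinct components of $\Delta^{=1}$ has codimension exactly $k$; and \cite[Theorem 1.2]{HW18}, that a divisorial plt centre of a $\Q$-factorial three-dimensional pair is normal up to a universal homeomorphism.

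\emph{For (1).} The pair $(\tilde D,\Delta_{\tilde D})$ is log canonical by adjunction. To see it is qdlt, take a log canonical centre $z$ of $(\tilde D,\Delta_{\tilde D})$ of codimension $j$ and set $x:=g(z)$, a log canonical centre of $(X,\Delta)$ of codimension $k:=j+1$ contained in $D$; by qdlt-ness $x\in D_1\cap\dots\cap D_k$ for distinct $D_i\subseteq\Delta^{=1}$, and by Remark~\ref{remark:generic_point_of_stratum} $x$ is the generic point of a component $W$ of this intersection, with $W\subseteq D$. Were $D$ not among $D_1,\dots,D_k$, then $W$ would lie in $D\cap D_1\cap\dots\cap D_k$, whose components all have codimension $k+1$ — impossible. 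Hence $D=D_k$ after relabelling, and the components through $z$ of $D_1|_{\tilde D},\dots,D_{k-1}|_{\tilde D}$ supply $j$ distinct divisors in $\Delta_{\tilde D}^{=1}$ passing through $z$ (distinct, again by Remark~\ref{remark:generic_point_of_stratum}, since these restrictions share no common components), which is the qdlt condition at $z$.

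\emph{For (3) and (2).} One half of (3), that a log canonical centre of codimension $k$ is the generic point of a stratum, is exactly the computation just made. For the converse, let $\eta$ be the generic point of a component of a stratum $D_1\cap\dots\cap D_k$; for $k=1$ this is clear, and for $k\ge 2$ any point $z\in\tilde D_1$ over $\eta$ is the generic point of a codimension-$(k-1)$ stratum of $\Delta_{\tilde D_1}^{=1}$, hence — using (1) to know $(\tilde D_1,\Delta_{\tilde D_1})$ is qdlt and the inductive hypothesis in dimension $n-1$ — a log canonical centre of $(\tilde D_1,\Delta_{\tilde D_1})$, which adjunction pushes down to show $\eta$ is a log canonical centre of $(X,\Delta)$. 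For (2) induct on the codimension of the stratum. If $D\subseteq\Delta^{=1}$ is a divisorial stratum, then for $0<\epsilon\ll 1$ the pair $(X,\Delta-\epsilon(\Delta^{=1}-D))$ is log canonical with reduced part $D$ and is plt in a neighbourhood of $D$: an exceptional valuation of log discrepancy zero for it would have log discrepancy zero for $(X,\Delta)$ with centre avoiding $\Delta^{=1}-D$, but by the first half of (3) that centre is a stratum of codimension $\ge 2$, which necessarily lies inside a component of $\Delta^{=1}$ other than $D$ — a contradiction. Then \cite[Theorem 1.2]{HW18} gives that $\tilde D\to D$ is a universal homeomorphism. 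For a stratum $V=D_1\cap\dots\cap D_k$ with $k\ge 2$: since universal homeomorphisms are stable under base change, restricting $\tilde D_1\to D_1$ over $V$ yields a universal homeomorphism onto $V$ whose source is a union of codimension-$(k-1)$ strata of $\Delta_{\tilde D_1}^{=1}$; applying the codimension induction in dimension $n-1$ (legitimate by (1)) to these strata and composing the resulting universal homeomorphisms gives the claim.

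The step I expect to be the main obstacle is (2): normality up to universal homeomorphism of the \emph{divisorial} strata is the genuinely substantive point, and it forces the use of the hard input \cite[Theorem 1.2]{HW18}, the delicate part being to perturb $\Delta$ so as to obtain a plt pair in an honest open neighbourhood of the divisor $D$ rather than merely at its generic point. A secondary technical burden, recurring in the inductive step of (3) as well, is to verify that $(\tilde D_1,\Delta_{\tilde D_1})$ remains qdlt (and, where needed, $\Q$-factorial, which for the surfaces $\tilde D_1$ follows from surface theory) so that the induction closes, and to track universal homeomorphisms carefully through the iterated normalisations.
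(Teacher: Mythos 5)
Your proposal is correct and follows essentially the same route as the paper: adjunction to the normalisation of a component of $\Delta^{=1}$, the codimension count of Remark \ref{remark:generic_point_of_stratum}, normality up to universal homeomorphism of divisorial plt centres via \cite[Theorem 1.2]{HW18}, and reduction to the surface case by induction. The only differences are cosmetic: you prove the parts in the order (1), (3), (2), use a small $\epsilon$-perturbation where the paper drops $\lfloor\Delta\rfloor - D$ outright, and spell out why $D$ itself must occur among the divisors $D_1,\dots,D_k$ through a centre contained in $D$ — a point the paper asserts without comment.
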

\begin{proof}
We work in a sufficiently small neighbourhood of a point of $X$.

If $n\leq 2$, then the lemma follows by standard results on surface pairs (cf.\ \cite{KollarSing}).  Indeed, a two-dimensional pair $(X,\Delta)$ is qdlt if either it is plt, or $\Delta = C_1+C_2$, $X$ is an $A_n$-singularity, $(X,\Delta)$ is snc when $n=1$, and, when $n>1$, the strict transforms of $C_1$ and $C_2$ intersect the exceptional locus of the minimal resolution of $X$ transversally at single points on the first and the last curve, respectively. Thus we may assume that $n=3$.

First, note that irreducible divisors in $\Delta^{=1}$ are normal up to a universal homeomorphism. Indeed, if $D \subseteq \Delta^{=1}$ is an irreducible divisor, then $(X,\Delta - \lfloor \Delta \rfloor +D)$ is plt and we can apply \cite[Theorem 1.2]{HW18}.

Let $x \in \tilde D$ be a log canonical centre of $(\tilde D, \Delta_{\tilde D})$. Then $g(x)$ is a log canonical centre of $(X, \Delta)$. Indeed, otherwise there exists an non-zero divisor $H$ passing through $g(x)$ and $\epsilon>0$ such that $(X,\Delta+\epsilon H)$ is lc at $g(x)$. Thus, by adjunction, $(\tilde D, \Delta_{\tilde D}+\epsilon H|_{\tilde D})$ is lc at $x$, which is impossible.

Let $k$ be the codimension of $g(x)$ in $X$. By definition of qdlt pairs, there exist divisors $D_1, \ldots, D_k \subseteq \Delta^{=1}$, with $D_1=D$, such that 
\[
g(x) \in D_1 \cap \ldots \cap D_k.
\]
Then $x \in D_2|_{\tilde D} \cap \ldots \cap D_k|_{\tilde D}$, where $D_i|_{\tilde D} \subseteq \Delta^{=1}_{\tilde D}$ for $i\geq 2$ have no mutually common components (cf.\ Remark \ref{remark:generic_point_of_stratum}). Since $x$ is of codimension $k-1$ in $\tilde D$, this shows that $(\tilde D, \Delta_{\tilde D})$ is qdlt at $x$. Hence (1) holds.

As for (2), pick a stratum $V = D_1 \cap \ldots \cap D_k$ of $\Delta^{=1}$. If $k=1$, then we are done by the first paragraph. Otherwise, 
\[
g^{-1}(V) = D_2|_{\tilde D_1} \cap \ldots \cap D_k|_{\tilde D_1}
\]
is a stratum of $\Delta^{=1}_{\tilde D_1}$ where $g \colon \tilde D_1 \to D_1$ is the normalisation of $D_1$ and $K_{\tilde D_1} + \Delta_{\tilde D_1} = (K_X + \Delta)|_{\tilde D_1}$. Note that each $D_l|_{\tilde D_1}$ is irreducible, as otherwise $(X,\Delta)$ admits a log canonical centre of codimension three which is contained in only two divisors, $D_1$ and $D_l$, of $\Delta^{=1}$. By the surface case, $g^{-1}(V)$ is normal up to a universal homeomorphism, and hence so is $V$ as $g$ is a universal homeomorphism.

Now, we deal with (3). Since the images of log canonical centres of the surface pair $(\tilde D, \Delta_{\tilde D})$ in $X$, for the normalisation $\tilde D$ of a divisor $D \subseteq \Delta^{=1}$, are log canonical centres of $(X,\Delta)$, we see that the generic points of the strata of $\Delta^{=1}$ are log canonical centres. If $x \in X$ is a log canonical centre of $(X,\Delta)$ of codimension $k$, then by definition $x \in V := D_1 \cap \ldots \cap D_k$ for $D_1, \ldots, D_k \subseteq \Delta^{=1}$ and $\cod_X(V)=k$ (cf.\ Remark \ref{remark:generic_point_of_stratum}). Thus, $x$ is a generic point of $V$. 
\end{proof}}

The following lemma generalises the inversion of adjunction from \cite[Corollary 1.5]{HW17}.
\begin{lemma}[{Inversion of adjunction}] \label{lem:qdlt_inv_adjunction} Consider a $\mathbb Q$-factorial three-dimensional log pair $(X,S+E + B)$ defined over a perfect field of characteristic $p>0$, where $S$, $E$ are irreducible divisors and $\lfloor B \rfloor = 0$. Write $K_{\tilde S} + C_{\tilde S} + B_{\tilde S} = (K_X+S+E+B)|_{\tilde S}$, where $\tilde S$ is the normalisation of $S$, the divisor $C_{\tilde S} = (S \cap E)|_{\tilde S}$ is irreducible, and $\lfloor B_{\tilde S} \rfloor = 0$.  Assume that $(\tilde S,C_{\tilde S} + B_{\tilde S})$ is plt. Then $(X,S+E+B)$ is qdlt in a neighborhood of $S$.
\end{lemma}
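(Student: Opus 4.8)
The plan is to reduce the three-dimensional statement to the two-dimensional (surface) case via adjunction, exactly as in the proof of the preceding lemma on qdlt pairs, and then invoke the inversion of adjunction from \cite[Corollary 1.5]{HW17} together with the classification of qdlt surface singularities recalled in that proof. First, I would observe that $(\tilde S, C_{\tilde S} + B_{\tilde S})$ being plt, together with the definition of an $m$-complement being irrelevant here, forces $(X, S+E+B)$ to be log canonical in a neighbourhood of $S$: this is precisely the content of \cite[Corollary 1.5]{HW17} (inversion of adjunction for plt, up to universal homeomorphism), applied to the divisor $S$, since $(K_X+S+E+B)|_{\tilde S} = K_{\tilde S} + C_{\tilde S} + B_{\tilde S}$ is plt. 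So the pair is lc near $S$, and $S$ itself is normal up to a universal homeomorphism.

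Next, I would analyse the log canonical centres of $(X,S+E+B)$ lying in a neighbourhood of $S$ and show each is a generic point of a stratum of $(S+E)^{=1}$ contained in $S$, with the correct codimension. Any such centre maps, under the normalisation $g\colon \tilde S \to S$, either to a log canonical centre of $(\tilde S, C_{\tilde S}+B_{\tilde S})$ — but that pair is plt, so its only lc "centre" would be a divisor, impossible since $B_{\tilde S}$ has zero round-down and $C_{\tilde S}$ is the unique boundary divisor of coefficient one — or the centre is contained in $S$ but is an lc centre coming from the adjunction data on $E$ or from $S\cap E$. Localising at a generic point of $S\cap E$ and using the surface theory (the classification: a qdlt surface pair is either plt, or has boundary $C_1+C_2$ with $X$ an $A_n$-singularity and the strict transforms meeting the minimal resolution transversally at the extremal curves), I would check that along $S\cap E$ the threefold $X$ is $\Q$-factorial lc with two coefficient-one divisors through it, hence is — at the generic point of that curve — analytically (up to universal homeomorphism) of the required $A_n$ type, so $(X,S+E+B)$ is qdlt there. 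Isolated lc centres (codimension three points) contained in $S$ then lie on $S\cap E$ by the same argument, and the codimension-count of Remark \ref{remark:generic_point_of_stratum} shows they lie in the expected triple intersection — but with only two visible divisors $S,E$, $\Q$-factoriality plus the surface analysis rules this out unless a third component of $B^{=1}$ passes through, which cannot happen as $\lfloor B\rfloor = 0$. Hence every lc centre near $S$ is cut out by the right number of coefficient-one divisors, which is the definition of qdlt.

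I expect the main obstacle to be the codimension-three (isolated lc centre) case: one must rule out, or correctly interpret, an lc centre of $(X,S+E+B)$ lying on $S$ but not on $S\cap E$, or lying on $S\cap E$ but not cut out transversally. The cleanest route is to push everything onto $\tilde S$ via part (1) and the inversion-of-adjunction statement: since $(\tilde S, C_{\tilde S}+B_{\tilde S})$ is \emph{plt}, it has no proper lc centres at all, so any lc centre of $(X,S+E+B)$ meeting $S$ must either be the divisor $E$ restricted, or $S\cap E$ itself, or a point on $C_{\tilde S}$ — and a point of $C_{\tilde S}$ is a point where the plt surface pair fails to be snc, which by surface theory means it is still a quotient (cyclic) singularity of $\tilde S$ with $C_{\tilde S}$ through it transversally, giving exactly the qdlt picture one codimension up. Assembling these local pictures over a neighbourhood of $S$, and invoking \cite[Theorem 1.2]{HW18} for the normality-up-to-homeomorphism statements wherever needed, completes the argument.
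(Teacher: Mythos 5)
Your proposal assembles the right ingredients (the plt/klt inversion of adjunction of \cite{HW18} and the surface picture), but the core step --- excluding log canonical centres of $(X,S+E+B)$ other than the generic point of $C=S\cap E$ --- is not actually proved, and in places the reasoning is circular. First, the claim that an lc centre of $(X,S+E+B)$ contained in $S$ ``maps to an lc centre of $(\tilde S, C_{\tilde S}+B_{\tilde S})$'' is precisely the hard (inversion-of-adjunction for lc centres) direction; it does not follow from the easy adjunction direction or from \cite[Corollary 1.5]{HW18} as stated, since that result concerns klt restrictions versus plt ambient pairs, and here the restriction is only plt and the ambient pair has two coefficient-one divisors. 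Second, your treatment of a hypothetical zero-dimensional lc centre is the wrong way around: you invoke the codimension count of Remark \ref{remark:generic_point_of_stratum} to place it on a ``triple intersection'' and then rule it out for lack of a third divisor, but for a general lc pair an isolated lc centre need not lie on any stratum of $\Delta^{=1}$ at all (that lc centres are generic points of strata is a consequence of qdlt-ness, i.e.\ exactly what is being proved), and interpreting a quotient singularity of $\tilde S$ along $C_{\tilde S}$ as ``the qdlt picture one codimension up'' does not address whether that point is an lc centre of the threefold pair; if it were, qdlt-ness would fail and the centre must be excluded, not reinterpreted.

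The missing mechanism is a perturbation argument, which is how the paper proceeds: assume by contradiction there is an lc centre $Z$ of codimension at least two, different from $C$, whose closure meets $S$; choose a general Cartier divisor $H$ containing $Z$, so that for $0<\delta\ll 1$ and then $0<\epsilon\ll 1$ the pair $(X,S+(1-\epsilon)E+B+\delta H)$ is not lc at $Z$; on the other hand $(\tilde S,(1-\epsilon')C_{\tilde S}+B_{\tilde S}+\delta H|_{\tilde S})$ is klt because $(\tilde S,C_{\tilde S}+B_{\tilde S})$ is plt, so \cite[Corollary 1.5]{HW18} forces the perturbed threefold pair to be plt, hence lc, in a neighbourhood of $S$ --- a contradiction since $Z$ meets $S$. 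Note also that this uniform argument handles centres not contained in $S$ (e.g.\ curves in $E$ meeting $S$), a case your stratified analysis does not clearly cover. Without some version of this perturbation step your outline does not close the argument.
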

\begin{proof} Assume by contradiction that $(X,S+E+B)$ admits a log canonical centre $Z$ of codimension at least two, different from $C = S\cap E$, and intersecting $S$. Let $H$ be a general Cartier divisor containing $Z$. Then for any $0 < \delta \ll 1$ we can find $0 < \epsilon \ll 1$ such that $(X,S + (1-\epsilon) E + B + \delta H)$ is not lc at $Z$. On the other hand, $(\tilde S, (1-\epsilon')C_{\tilde S} + B_{\tilde S} + \delta H|_{\tilde S})$ is klt for any $0 < \epsilon' \ll 1$ and $0 < \delta \ll 1$. This contradicts \cite[Corollary 1.5]{HW18}.
\end{proof}

We will use qdlt singularities for log pairs with two divisorial centres. In this case, the qdlt-ness is preserved under flops as long as the divisorial centres intersect each other.
\begin{lemma} \label{lemma:flopping_dlt} Let $(X,S_1+S_2+B)$ be a $\mathbb{Q}$-factorial qdlt three-dimensional pair where $S_1$, $S_2$ are irreducible divisors and $\lfloor B \rfloor = 0$. Let 
\[
f \colon (X, S_1+S_2+B) \dashrightarrow (X', S'_1+S'_2 + B')
\]
be a $(K_X+S_1+S_2+B)$-flop of a curve $\Sigma$ for a relative-Picard-rank-one flopping contraction $g \colon X \to Z$. Suppose that $\Sigma \cdot S_1 <0$. Then $(X',S'_1+S'_2+B')$ is qdlt or $S'_1 \cap S'_2 = \emptyset$ in a neigbhourhood of $\mathrm{Exc}(g')$, where $g' \colon X' \to Z$ is the flopped contraction.
\end{lemma}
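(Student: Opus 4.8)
The plan is to analyze the flop $f$ via restriction to $S_1$, using that a flop is crepant. Let $g\colon X\to Z$ be the flopping contraction with $\mathrm{Exc}(g)$ containing the curve $\Sigma$, and let $g'\colon X'\to Z$ be the flopped contraction with $\Sigma' = \mathrm{Exc}(g')$. Since $(X,S_1+S_2+B)$ is qdlt and $\Sigma\cdot S_1<0$, the curve $\Sigma$ is contained in $S_1$; I would first record that $f$ restricts to a birational map $f_1\colon \tilde S_1 \dashrightarrow \tilde S_1'$ of the normalisations which is $(K_{\tilde S_1}+C_{\tilde S_1}+B_{\tilde S_1})$-crepant, where $K_{\tilde S_1}+C_{\tilde S_1}+B_{\tilde S_1} = (K_X+S_1+S_2+B)|_{\tilde S_1}$ with $C_{\tilde S_1} = (S_1\cap S_2)|_{\tilde S_1}$ (here $C_{\tilde S_1}$ may be $0$ if $S_1\cap S_2=\emptyset$, but then $(X',\dots)$ trivially has $S_1'\cap S_2' = \emptyset$ near $\mathrm{Exc}(g')$ and we are done, so assume $S_1\cap S_2\neq\emptyset$). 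By the qdlt lemma proved above, $(\tilde S_1, C_{\tilde S_1}+B_{\tilde S_1})$ is qdlt, hence plt or has the explicit $A_n$-type description near the relevant locus.

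**The surface computation.** The heart of the matter is to understand what the induced birational map does on $\tilde S_1$. Since $\Sigma\cdot S_1<0$, the contraction $g|_{S_1}$ is birational and contracts (a component of the preimage of) $\Sigma$ on $\tilde S_1$; because $\rho(X/Z)=1$ and the flop is crepant, $f_1$ is itself either an isomorphism in a neighbourhood or a crepant birational map between surface pairs, and I would invoke two-dimensional MMP/crepant birational geometry to control it: a crepant birational map between $\Q$-factorial qdlt (in particular, numerically lc) surface pairs is an isomorphism away from the log canonical centres, and near a log canonical centre it is governed by the $A_n$-chain structure. The key point is to track the curve $C_{\tilde S_1} = (S_1\cap S_2)|_{\tilde S_1}$: either it is disjoint from the flopping/flopped curve on $\tilde S_1$ — in which case $C_{\tilde S_1}$ is carried isomorphically to $C_{\tilde S_1'}$, the surface pair stays plt/qdlt along it, and inversion of adjunction (Lemma \ref{lem:qdlt_inv_adjunction}) gives that $(X',S_1'+S_2'+B')$ is qdlt in a neighbourhood of $\Sigma'$ — or $C_{\tilde S_1'}$ meets the flopped curve, and then one must check the $A_n$-configuration is still compatible with qdlt-ness, or it degenerates to the case $S_1'\cap S_2' = \emptyset$.

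**Concluding via inversion of adjunction.** Once I know $(\tilde S_1', C_{\tilde S_1'}+B_{\tilde S_1'})$ is plt (equivalently qdlt of the "plt along $C$" type) in a neighbourhood of $\Sigma'$ — which is the generic expectation since a flop of a curve with $\Sigma\cdot S_1<0$ tends to separate $S_1$ from $S_2$ or keep their intersection transverse — I apply Lemma \ref{lem:qdlt_inv_adjunction} with $(X',S_1'+(S_2'\text{-part})+B')$ to deduce $(X',S_1'+S_2'+B')$ is qdlt near $S_1'$, hence near $\mathrm{Exc}(g')\subseteq S_1'$. The remaining exceptional case, where the surface analysis forces the flopped curve to "absorb" the intersection so that $(\tilde S_1', C_{\tilde S_1'}+B_{\tilde S_1'})$ fails to be plt, is exactly the case in which $S_1'$ and $S_2'$ have been pulled apart, giving $S_1'\cap S_2' = \emptyset$ near $\mathrm{Exc}(g')$; I would verify this by checking that $\Sigma\cdot S_2$ has the sign forcing $S_2$ to be contracted off $S_1$ under the flop.

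**Main obstacle.** The delicate part is the explicit two-dimensional bookkeeping: classifying exactly how the crepant birational map $f_1$ on the qdlt surface $\tilde S_1$ moves the curve $C_{\tilde S_1}$ relative to the flopping curve, and pinning down precisely when the output is plt along the flopped curve (giving qdlt-ness upstairs by inversion of adjunction) versus when the two divisorial centres must become disjoint. One has to handle the $A_n$-singularity case of qdlt surfaces carefully, since there the log canonical centre sits at a node of the strict transforms and a flop can move it along the exceptional chain; ruling out the configurations incompatible with qdlt-ness (other than the disjointness case) is where the real work lies, and it is the only place the hypothesis $\Sigma\cdot S_1<0$ (as opposed to $\leq 0$) gets used in an essential way.
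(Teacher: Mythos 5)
There is a genuine gap: the dichotomy you defer to your ``main obstacle'' paragraph --- deciding when the flopped pair stays qdlt versus when $S_1'\cap S_2'=\emptyset$ --- is precisely the content of the lemma, and your proposed route does not supply the tools to decide it. Restricting to $\tilde S_1$ and classifying the induced crepant surface map cannot settle the question, because the induced map on $\tilde S_1$ is essentially just the contraction of $\Sigma$ (as $S_1$ is $g$-anti-ample), and the position of $S_2'$ relative to $S_1'$ after the flop is three-fold information not visible on that surface. The paper instead splits on the sign of $\Sigma\cdot S_2$, which your sketch never does. When $\Sigma\cdot S_2\ge 0$, one shows no component $C$ of $S_1\cap S_2$ is $g$-exceptional: if it were, $\rho(X/Z)=1$ would force $C\cdot S_2\ge 0$, contradicting $C\cdot S_2=C|_{\tilde S_1}\cdot\bigl(\lambda C|_{\tilde S_1}\bigr)<0$; then, since a flop preserves log discrepancies, the codimension-two lc centres of $(X',S_1'+S_2'+B')$ are exactly the images of the generic points of $(S_1\cap S_2)\setminus\mathrm{Exc}(g)$, i.e.\ generic points of $S_1'\cap S_2'$, and qdlt-ness follows directly --- no inversion of adjunction is needed. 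When $\Sigma\cdot S_2<0$, one first shows $\mathrm{Exc}(g)=S_1\cap S_2$ locally, and then that $S_1'\cap S_2'\neq\emptyset$ would make $S_2'|_{\tilde S_1'}$ an effective, $g'$-exceptional, relatively ample divisor, contradicting the negativity lemma; hence $S_1'\cap S_2'=\emptyset$.

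Two further points. First, your plan to conclude qdlt-ness via Lemma \ref{lem:qdlt_inv_adjunction} only controls a neighbourhood of $S_1'$, whereas the statement requires it in a neighbourhood of all of $\mathrm{Exc}(g')$; the discrepancy-preservation argument of the paper avoids this issue. Second, your heuristic that the flop ``tends to separate $S_1$ from $S_2$ or keep their intersection transverse'' is exactly what must be proved, and the sign of $\Sigma\cdot S_2$ (not of $\Sigma\cdot S_1$) is what governs which alternative occurs; as written, your argument does not establish either branch.
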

\begin{proof}
In proving the proposition we can assume that $X$ and $X'$ are sufficiently small neighbourhoods of $\mathrm{Exc}(g)$ and $\mathrm{Exc}(g')$, respectively. Further, we can assume that the flop is non-trivial, and so a strict transform of a $g$-ample divisor is $g'$-anti-ample.

First, consider the case when $\Sigma \cdot S_2 \geq 0$. Pick a connected component $C \subseteq S_1 \cap S_2$ and let $\tilde S_1 \to S_1$ be the normalisation of $S_1$. Since $(X,S_1+S_2+B)$ is qdlt, $C$ is an irreducible curve. We claim that $C$ is not $g$-exceptional. Indeed, otherwise, in view of $\rho(X/Z)=1$, we have $C\cdot S_2 \geq 0$, which contradicts the following calculation:
\[
C \cdot S_2 = C|_{\tilde S_1} \cdot S_2|_{\tilde S_1} = C|_{\tilde S_1} \cdot (\lambda C|_{\tilde S_1}) < 0
\]
where $\lambda >0$. As a consequence, no component of $S_1 \cap S_2$ is contained in ${\rm Exc}\, g$, and so divisorial places over ${\rm Exc}\, g$ have log discrepancy greater than zero with respect to $(X,S_1+S_2+B)$.  Since flopping preserves discrepancies, we get that the codimension two log canonical centres of $(X',S'_1 + S'_2 + B')$ are images of the generic points of $(S_1 \cap S_2) \, \backslash \, {\rm Exc}\, g$, and so they are generic points of $S'_1 \cap S'_2$. Hence, the pair $(X',S_1'+S_2'+B')$ is qdlt.

Therefore, we can assume that $\Sigma \cdot S_2 < 0$. In particular, 
\[
{\rm Exc}\, g = S_1 \cap S_2
\]
up to replacing $X$ by a neighbourhood of ${\rm Exc}\, g$. Indeed, if we pick an irreducible curve $C \subseteq {\rm Exc}\, g$, then $C \cdot S_i < 0$ for $1 \leq i \leq 2$ as $\rho(X/Z)=1$, and so $C \subseteq S_1\cap S_2$. To prove the inclusion in the opposite direction, assume there exists a non-exceptional irreducible curve $C \subseteq S_1 \cap S_2$ which intersects ${\rm Exc}\, g$ at some exceptional irreducible curve $C'$. As above, $C$ is a connected component of $S_1 \cap S_2$, and so $C' \not \subseteq S_1 \cap S_2$. In particular, $C' \cdot S_i \geq 0$ for some $1 \leq i \leq 2$, which is a contradiction.

We aim to show that $S'_1 \cap S'_2 = \emptyset$. By contradiction assume that $S'_1 \cap S'_2 \neq \emptyset$. By the above paragraph, we have that $S'_1 \cap S'_2 \subseteq {\rm Exc}\, g'$. Since $S_2$ is $g$-anti-ample, $S'_2$ is $g'$-ample and $S'_2|_{\tilde S'_1}$ is an exceptional effective relatively ample divisor, where $\tilde S'_1$ is the normalisation of $S'_1$. This is easily seen to contradict the negativity lemma.
\end{proof} 

\subsection{{Surface lemmas}}
We prove a slightly stronger variant of the construction explained in the proof of \cite[Theorem 3.2]{hx13}.
\begin{lemma} \label{lemma:surface_mmp_lemma} Let $(X,B)$ be a two-dimensional klt pair defined over a perfect field of characteristic $p>0$ and let $f \colon X \to Z$ be a projective birational map such that $-(K_X+B)$ is relatively nef. Then there exist an $f$-exceptional irreducible curve $C$ on a blow-up of $X$ and projective birational maps $g \colon Y \to X$ and $h \colon Y \to W$ over $Z$ such that:
\begin{enumerate}
	\item $g$ extracts $C$ or is the identity if $C \subseteq X$,
	\item $(Y, C+ B_Y)$ is plt,
	\item $(W, C_W + B_W)$ is plt and $-(K_{W}+C_W+B_W)$ is ample over $Z$,
	\item $h^*(K_{W}+C_W+B_W) - (K_{Y}+C+B_Y) \geq 0$,
\end{enumerate} 
where $K_{Y}+bC + B_Y = g^*(K_X+B)$ for $C \not \subseteq \Supp B_Y$, $C_W := h_*C \neq 0$, and $B_W := h_*B_Y$.
\end{lemma}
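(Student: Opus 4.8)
The plan is to reduce to a situation where the minimal model program for surfaces (over $Z$) can be run, and then to read off the curve $C$ and the maps $g,h$ from that MMP. First I would pass to a log resolution and set things up so that $X$ is already as explicit as needed. The key point is the following dichotomy: either $K_X+B$ is already relatively anti-ample over $Z$ (in which case, if moreover $f$ is already an isomorphism at the generic point of the locus we care about, we can try to take $g=\mathrm{id}$, $Y=W=X$, $C$ some curve on a blow-up), or $K_X+B$ is relatively trivial on some $f$-exceptional curve. I would argue that after finitely many steps one arrives at a model on which there is a distinguished $f$-exceptional curve $C$ with $(K_X+B)\cdot C = 0$ but along which one can still decrease; the standard trick (as in the proof of \cite[Theorem 3.2]{hx13}) is to pick $C$ to be the $f$-exceptional curve of minimal discrepancy and extract it.

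Concretely, I would do the following. Let $g\colon Y\to X$ be the extraction of a single $f$-exceptional divisorial valuation $C$ with $a(C; X,B)$ as small as possible (such a $g$ exists by surface MMP over $Z$, or take $g=\mathrm{id}$ and $C\subseteq X$ a suitable curve if the relevant valuation is already a divisor on $X$); this gives (1). Writing $K_Y + bC + B_Y = g^*(K_X+B)$ with $C\not\subseteq\Supp B_Y$, minimality of the discrepancy forces $b\le 1$, i.e. $1-b = a(C;X,B)\ge 0$, and a local analysis at the components of $B_Y$ meeting $C$ (using that $(X,B)$ is klt and the computation of different on a smooth surface) shows $(Y, C+B_Y)$ is plt — giving (2). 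For (3) and (4), run a $(K_Y+C+B_Y)$-MMP over $Z$: since $-(K_X+B)$ is $f$-nef, we have $K_Y+C+B_Y = g^*(K_X+B) + (1-b)C$ with $(1-b)C$ effective and $g$-exceptional, so by the negativity lemma $K_Y+C+B_Y$ is not $f$-pseudo-effective once we note $C$ itself can be made to move; hence the MMP must contract curves and terminates with a model $h\colon Y\to W$ on which $-(K_W+C_W+C_B)$ is ample over $Z$. One checks $C_W = h_*C\ne 0$ because the MMP is $(K_Y+C+B_Y)$-negative and $C$ has coefficient $1$, so it is not contracted (or if it were, one re-chooses $C$). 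Plt-ness of $(W,C_W+B_W)$ is preserved along a $(K_Y+C+B_Y)$-MMP. Finally (4) is exactly the negativity lemma applied to $h$: since $K_Y+C+B_Y$ is relatively negative over each step (hence over $W$ after pushing forward, using that $-(K_W+C_W+B_W)$ is ample and $h$ is birational), the difference $h^*(K_W+C_W+B_W)-(K_Y+C+B_Y)$ is effective.

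The main obstacle I expect is making the choice of $C$ robust: I need a single irreducible curve $C$ that (a) can be extracted with $(Y,C+B_Y)$ plt, (b) survives the subsequent MMP to $W$ (so $h_*C\ne 0$), and (c) yields $-(K_W+C_W+B_W)$ ample. These three requirements interact — for instance, extracting the minimal-discrepancy valuation is what forces plt-ness and $b\le 1$, but one then has to verify that a $(K_Y+C+B_Y)$-MMP does not contract $C$ (which holds because $C$ has coefficient exactly $1$ and the MMP strictly decreases $K_Y+C+B_Y$, so a divisor of coefficient $1$ cannot become the contracted one while the pair stays plt). I would also need to handle the degenerate cases where $f$ is already the identity, or where no extraction is needed because the relevant valuation is a curve on $X$ already; these are the ``$g$ is the identity'' clauses in (1). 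Throughout, all the surface-theoretic inputs (existence of extractions with prescribed exceptional divisor, termination of the surface MMP over a base, preservation of plt, the negativity lemma) are available over perfect fields of characteristic $p>0$ by Tanaka's work, so there is no characteristic obstruction; the content is purely the bookkeeping of discrepancies.
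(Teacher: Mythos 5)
Your proposal breaks down at the two places where the lemma actually has content. The paper does not choose $C$ as a minimal-discrepancy valuation of $(X,B)$: following \cite[Claim 3.3]{hx13} it first builds a tie-breaking divisor $\Delta\geq 0$ with $K_X+B+\Delta\sim_{\Q,Z}0$ such that $(X,B+\Delta)$ is lc with a \emph{unique} non-klt place $C$ exceptional over $Z$, and this relative triviality drives everything that follows. More seriously, you run a $(K_Y+C+B_Y)$-MMP over $Z$ and claim it terminates with $-(K_W+C_W+B_W)$ ample. Over a birational base every divisor is relatively big, so ``$K_Y+C+B_Y$ is not $f$-pseudo-effective'' is vacuous, and a genuine $(K_Y+C+B_Y)$-MMP over $Z$ stops exactly when $K_W+C_W+B_W$ is relatively nef --- the opposite of (3). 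The paper runs the MMP for $G:=g^*\Delta-g^*\Delta\wedge C\sim_{\Q,Z}-(K_Y+C+B_Y)$, i.e.\ an anti-MMP contracting curves on which $K_Y+C+B_Y$ is \emph{positive}. With your direction the negativity lemma yields $K_Y+C+B_Y\geq h^*(K_W+C_W+B_W)$, which is the reverse of the inequality (4) you are supposed to prove.

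Even after correcting the direction, your choice of $C$ supplies none of the remaining ingredients. Plt-ness of $(W,C_W+B_W)$ is \emph{not} preserved by the anti-MMP (its steps are $(K+C+B_Y)$-positive); the paper deduces it from the crepant identity $K_Y+C+B_Y+G=g^*(K_X+B+\Delta)\sim_{\Q,Z}0$, which makes $(W,C_W+B_W+G_W)$ plt, and the same set-up (with $C\not\subseteq\Supp G$, so $G\cdot C\geq 0$ at every step) is what shows $C$ is never contracted --- not the heuristic that a coefficient-one divisor cannot be contracted by ``its own'' MMP, which anyway refers to the wrong MMP here. Likewise, plt-ness of $(Y,C+B_Y)$ in (2) comes from $C$ being the unique lc place of $(X,B+\Delta)$, and you give no argument that a minimal-discrepancy extraction has this property. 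Finally, the output of the anti-MMP only has $-(K_W+C_W+B_W)$ nef, hence semiample over $Z$; to reach ampleness one must contract along the semiample fibration and verify that this does not contract $C_W$, i.e.\ that $(K_W+C_W+B_W)\cdot C_W<0$, which the paper proves by contradiction using a connected component $\Gamma$ of $h^*(K_W+C_W+B_W)-(K_Y+bC+B_Y)$ containing $C$ and the inequality $\Gamma^2<0$. Your proposal omits this step entirely, and the remark that ``$C$ itself can be made to move'' is false, since $C$ is exceptional over $Z$.
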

\noindent The variety $W$ is the canonical model of $-(K_{Y}+C+B_Y)$ over $Z$.
\begin{proof}
Let $\Delta$ be as in \cite[Claim 3.3]{hx13}, that is such that $(X,B + \Delta)$ is lc and admits a unique non-klt place $C$ exceptional over $Z$  and $K_X + B + \Delta \sim_{\Q, Z} 0$. Let $g \colon Y \to X$ be the extraction of the unique non-klt place $C$ of $(X,B+\Delta)$, or the identity if $C$ a divisor on $X$ (see the proof of \cite[Theorem 3.2]{hx13}). By construction, (1) and (2) hold.


Let $G := g^*\Delta - g^*\Delta \wedge C$. Note that $g^*\Delta \wedge C = (1-b)C$. Let $h \colon Y \to W$ be the output of a $G$-MMP over $Z$ (which is equivalent to a $-(K_{Y} + C + B_Y)$-MMP). Let $G_W := h_*G$. Now, (4) follows by the negativity lemma. 

To prove (3), notice that since $C$ is not contained in the support of $G$, then $G\cdot C\geq 0$ and so $C$ is not contracted by $Y\to W$. Since 
\[
K_{Y}+C+B_Y+G=g^*(K_X+B+\Delta)\sim _{\Q,Z}0
\]
is plt, it follows that $(W,C_W+B_W+G_W)$ is plt, and hence so is $(W,C_W+B_W)$. Since $W$ is a $G$-minimal model over $Z$, then $-(K_{W}+C_W+B_W)\sim _{\Q,Z}G_W$ is nef, and in particular semiample over $Z$. To conclude the proof of (3), we need to show that $(K_{W}+C_W+B_W) \cdot C_W < 0$. Indeed, if this is true then the associated semiample fibration does not contract $C_W$ and so we can replace $W$ by the image of the associated semiample fibration to make $-(K_{W}+C_W+B_W)$ ample without giving up the plt-ness of $(W,C_W+B_W)$.

Assume by contradiction that $(K_{W}+C_W+B_W) \cdot C_W = 0$. Let $\Gamma$ be an effective $\Q$-divisor constructed as a connected component of 
\[
h^*(K_{W}+C_W+B_W) - (K_{Y}+bC+B_Y)
\]
containing $C$. Since $\Gamma$ is exceptional over $Z$, we have $\Gamma^2 < 0$. This contradicts the following calculation:
\begin{align*}
\Gamma^2 &= \Gamma \cdot (h^*(K_{W}+C_W+B_W) - (K_{Y}+bC+B_Y)) \\
		 &\geq \Gamma \cdot h^*(K_{W}+C_W+B_W) \\
		 &= h_*\Gamma \cdot (K_{W} + C_W + B_W) = 0,
\end{align*}
as $\Supp h_*\Gamma = C_W$.
\end{proof}

The above result allows for a shorter proof of \cite[Theorem 3.1]{hx13}.
\begin{proposition}[{\cite[Theorem 3.1]{hx13}}] \label{prop:hacon_xu_surfaces} With notation as in the above lemma, suppose that $B$ has standard coefficients and $p>5$. Then $(X,B)$ is globally F-regular over $Z$.
\end{proposition}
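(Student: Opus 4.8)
The plan is to use Lemma~\ref{lemma:surface_mmp_lemma} to reduce the global F-regularity of $(X,B)$ over $Z$ to a statement about a log del Pezzo surface pair, and then to invoke Hacon--Xu's classification of such pairs together with Fedder-type criteria. More precisely, I would first apply the lemma to obtain $g \colon Y \to X$ and $h \colon Y \to W$ over $Z$, together with the $f$-exceptional curve $C$. Since $-(K_W + C_W + B_W)$ is ample over $Z$, the pair $(W, C_W + B_W)$ is a relative log del Pezzo over $Z$ of Picard rank at most two in a neighbourhood of the fibre of interest; because $C_W$ is a $g$-exceptional curve contracted to a point, after localising at that point $W \to Z$ is a birational contraction of a single rational curve $C_W \cong \mathbb{P}^1$ and $B_W$ meets $C_W$ in at most three points with standard coefficients (this is exactly the geometric situation analysed on \cite[p.\ 727, Lemma~3.8]{hx13}).

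The second step is to reduce the relative global F-regularity of $(X,B)$ over $Z$ to that of $(W, C_W + B_W)$ over $Z$. For this I would use that $g$ extracts a single divisor $C$ with $(Y, C + B_Y)$ plt and $b C + B_Y = g^*(K_X + B) - K_Y$, so by F-adjunction (and the fact that global F-regularity passes through $g$ since $K_Y + C + B_Y \geq g^*(K_X+B)$ after accounting for the coefficient $b<1$) it suffices to prove relative global F-regularity of $(Y, C + B_Y)$; then, since $h^*(K_W + C_W + B_W) - (K_Y + C + B_Y) \geq 0$ by part (4) of the lemma, relative global F-regularity descends from the statement on $Y$ to $W$ and conversely pulls back, because F-regularity is preserved under such crepant-or-better birational maps. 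So the whole problem is reduced to showing $(W, C_W + B_W)$ is globally F-regular over $Z$.

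The third step is the surface del Pezzo input: restricting to the curve $C_W$ via adjunction, $(\tilde C_W, \Diff) = (\mathbb{P}^1, a_1 P_1 + a_2 P_2 + a_3 P_3)$ with the $a_i$ standard coefficients, and by the numerical constraints coming from $-(K_W + C_W + B_W) \cdot C_W > 0$ the triple $(a_1,a_2,a_3)$ lies in the finite list in \cite[Lemma~3.8]{hx13}, or else the pair is $N$-complemented for $N \le 4$ reducing to the list $\{(\tfrac12,\tfrac12,\tfrac12), (\tfrac12,\tfrac23,\tfrac23), (\tfrac12,\tfrac23,\tfrac34), (\tfrac13,\tfrac34,\tfrac34)\}$ together with the cases $a_1 = 0$. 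Each of these is globally F-regular in characteristic $p > 5$ by Watanabe's classification \cite{watanabe91} (or a direct Fedder computation); then relative global F-regularity of $(W, C_W + B_W)$ over $Z$ follows by inversion of F-adjunction along $C_W$, exactly as in \cite[Theorem~3.1]{hx13}, \cite[Corollary~1.5]{HW17}.

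The main obstacle I expect is the bookkeeping in the second step — tracking how relative global F-regularity behaves under the extraction $g$ and the $G$-MMP $h$, since $B_Y$ may fail to dominate $g^*B$ on the nose and one must be careful that the coefficient $b$ of $C$ is strictly less than one so that increasing it to one (to make $C$ part of the boundary) only helps F-regularity. The clean way around this is to argue entirely on $W$: the pair $(W, C_W + B_W)$ sees all the relevant adjunction data, and relative global F-regularity of $(X,B)$ over $Z$ is implied by that of $(W, C_W+B_W)$ over $Z$ via F-adjunction applied to $C_W$ combined with part~(4) of Lemma~\ref{lemma:surface_mmp_lemma}. Once that reduction is in place the remaining content is purely the surface classification of \cite{hx13, watanabe91}, so the proof is genuinely short.
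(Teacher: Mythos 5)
Your proposal is correct and follows essentially the same route as the paper: reduce via Lemma \ref{lemma:surface_mmp_lemma} (using \cite[Proposition 2.11 and Lemma 2.12]{hx13}) to pure global F-regularity of $(W, C_W+B_W)$ over $Z$, then apply F-adjunction along $C_W$ and conclude on $C_W \cong \mathbb{P}^1$ by \cite[Theorem 4.2]{watanabe91}. The only difference is that your detour through the finite list of triples in \cite[Lemma 3.8]{hx13} is unnecessary, since for $p>5$ Watanabe's theorem handles every standard-coefficient boundary on $\mathbb{P}^1$ with $-(K_C+B_C)$ ample at once.
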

\begin{proof}
By \cite[Proposition 2.11 and Lemma 2.12]{hx13}, it is enough to show that $(W,C_W + B_W)$ is purely globally F-regular over $Z$,    and so by F-adjunction (see \cite[Lemma 2.10]{HW17}) it is enough to show that $(C, B_{C})$ is globally F-regular, where $K_{C} + B_{C} = (K_W + C_W + B_W)|_{C}$ and $C$ is identified with $C_W$. Since $-(K_{C} + B_{C})$ is ample and $B_{C}$ has standard coefficients, this follows from \cite[Theorem 4.2]{watanabe91}.
\end{proof}
\begin{remark} \label{rem:surfaces_char_5} If $p=5$, then the above proposition holds true unless $B_{C} = \frac{1}{2}P_1 + \frac{2}{3}P_2 + \frac{4}{5}P_3$ for three distinct points $P_1$, $P_2$, and $P_3$ (see \cite[4.2]{watanabe91}).
\end{remark}

In what follows, we will need the following result.
\begin{lemma} \label{lem:surface_char_5_with_special_complement} With notation as in Lemma \ref{lemma:surface_mmp_lemma}, suppose that $p>3$ and $(X,B)$ admits a $6$-complement $(X, E+B^c)$, where $E$ is a non-exceptional irreducible  curve intersecting the exceptional locus over $Z$. Then $(X,B)$ is globally F-regular over $Z$.

\end{lemma}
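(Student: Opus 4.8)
The plan is to reduce the global F-regularity of $(X,B)$ over $Z$ to a question about the canonical model $W$ produced by Lemma \ref{lemma:surface_mmp_lemma}, and then to trace the $6$-complement $(X,E+B^c)$ through the birational maps $g$ and $h$ so that it becomes a useful complement on $W$. First I would apply Lemma \ref{lemma:surface_mmp_lemma} to obtain the $f$-exceptional curve $C$, the extraction $g\colon Y\to X$, and the birational morphism $h\colon Y\to W$ with $(W,C_W+B_W)$ plt and $-(K_W+C_W+B_W)$ ample over $Z$. As in the proof of Proposition \ref{prop:hacon_xu_surfaces}, by \cite[Proposition 2.11 and Lemma 2.12]{hx13} together with the inequality (4) of Lemma \ref{lemma:surface_mmp_lemma}, it suffices to prove that $(W,C_W+B_W)$ is purely globally F-regular over $Z$; by F-adjunction (\cite[Lemma 2.10]{HW17}) this in turn reduces to showing that $(C_W, B_{C_W})$ is globally F-regular, where $K_{C_W}+B_{C_W}=(K_W+C_W+B_W)|_{C_W}$. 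Here $C_W\cong \mathbb{P}^1$ (it is a curve with $-(K_{C_W}+B_{C_W})$ ample) and $B_{C_W}$ has standard coefficients, so by \cite[Theorem 4.2]{watanabe91} and Remark \ref{rem:surfaces_char_5} the only obstruction is the exceptional case $B_{C_W}=\frac12 P_1+\frac23 P_2+\frac45 P_3$ in characteristic $p=5$; thus the whole lemma comes down to excluding this configuration using the hypothesis that $(X,B)$ has a $6$-complement $(X,E+B^c)$ with $E$ non-exceptional over $Z$ and meeting the exceptional locus.

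The key step is therefore to transport the $6$-complement. I would set $K_Y + E_Y + B_Y^c = g^*(K_X+E+B^c)$ and push forward by $h$ to get a divisor $E_W + B_W^c$ on $W$; since $K_X+E+B^c$ is log canonical with $6(K_X+E+B^c)\sim_{\Q,Z} 0$, and $g$ is a crepant extraction of a log canonical place, $(Y,E_Y+B_Y^c)$ is sub-log-canonical and crepant over $(X,E+B^c)$; the $-(K_Y+C+B_Y)$-MMP run by $h$ is also $-(K_Y+E_Y+B_Y^c)$-nonpositive, so by the negativity lemma $(W, E_W+B_W^c)$ is log canonical with $6(K_W+E_W+B_W^c)\sim_{\Q,Z}0$ and $B_W^c \geq B_W$, i.e.\ $(W, E_W+B_W^c)$ is a $6$-complement of $(W,C_W+B_W)$ (with the divisorial part of the complement being $C_W+E_W$ if $E_W\neq 0$). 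The crucial geometric point is that $E_W\neq 0$: because $E$ is non-exceptional over $Z$ and meets the exceptional locus over $Z$, its strict transform on $Y$ is not contracted by $h$ — here one uses that $h$ contracts only curves $\Gamma$ with $\Gamma\cdot(K_Y+C+B_Y)>0$, while a component of $E_Y$ in the support of $g^*\Delta - (1-b)C$ has nonnegative intersection with $G$, hence is not $h$-contracted — so $C_W$ and $E_W$ are two distinct components of $\lfloor C_W + E_W + B_W^c\rfloor$.

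Now I would restrict to $C_W$. Writing $K_{C_W} + B^c_{C_W} = (K_W + C_W + E_W + B_W^c)|_{C_W}$, by adjunction $(C_W, B^c_{C_W})$ is a $6$-complement of $(C_W, B_{C_W})$, and moreover the point $C_W\cap E_W$ appears in $B^c_{C_W}$ with coefficient $1$. Hence $(C_W, B_{C_W})$ admits a $6$-complement with a log canonical centre — it is not klt. But if $B_{C_W} = \frac12 P_1+\frac23 P_2+\frac45 P_3$, then on $\mathbb{P}^1$ the only $6$-complement is $B^c_{C_W}=\frac12 P_1+\frac23 P_2+\frac56 P_3$ (the rounding-up $\Delta^*$ forces coefficients $\geq \frac12,\frac23,\frac56$, and $\deg$ constraints with $6(K+\Delta^c)\sim 0$ pin them down), which is klt — a contradiction. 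Therefore the exceptional case of Remark \ref{rem:surfaces_char_5} cannot occur, so $(C_W, B_{C_W})$ is globally F-regular by \cite[Theorem 4.2]{watanabe91}, and unwinding the reductions gives that $(X,B)$ is globally F-regular over $Z$.

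The main obstacle I expect is the non-contraction claim $E_W\neq 0$ and, relatedly, keeping track of what "$E$ intersects the exceptional locus over $Z$" buys us after the extraction $g$ and the MMP $h$: one must be careful that the relevant component of the strict transform of $E$ is not among the curves contracted by $h$, and that the adjunction computation on $C_W$ genuinely records the coefficient $1$ at $C_W\cap E_W$ rather than something smaller. A secondary technical point is the bookkeeping for the complement when $E_Y$ may have several components or when $b<1$ makes $B_Y$ fail to be effective, but as in the rest of the paper this is handled by taking suitable linear combinations of $B_Y$ and $B_Y^c$ and does not affect the log canonical / crepant structure used above.
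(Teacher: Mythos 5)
Your opening reduction (to pure global F-regularity of $(W,C_W+B_W)$ over $Z$ and then, via F-adjunction, to global F-regularity of $(C,B_C)$) coincides with the paper's, and your transport of the $6$-complement to $W$ is essentially its first step. But the core of your argument has a genuine gap: you assert that $B_{C_W}$ has standard coefficients, so that by Watanabe and Remark \ref{rem:surfaces_char_5} the only possible failure of global F-regularity is $B_{C_W}=\frac{1}{2}P_1+\frac{2}{3}P_2+\frac{4}{5}P_3$ in characteristic $5$. The lemma explicitly does \emph{not} assume that $B$ has standard coefficients (see the sentence immediately after the statement), and it is invoked later (in Proposition \ref{prop:single_extraction} and inside the MMP of Theorem \ref{theorem:almost_flips}, via Proposition \ref{proposition:flip_case3}, where the boundary is a convex combination $\lambda(\cdot)+(1-\lambda)(\cdot)$) precisely in situations where the coefficients are not standard. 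For non-standard $B_C$ there is no two-line classification of the globally F-regular configurations on $\mathbb{P}^1$ for a given $p>3$, so excluding the single triple $(\frac{1}{2},\frac{2}{3},\frac{4}{5})$ does not prove the statement in the generality in which it is used.

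A second, related gap: you claim that $C_W\cap E_W$ contributes a coefficient-one point to $B^c_{C_W}$, which implicitly assumes both that $E_W$ actually meets $C_W$ and that the coefficient of $C_W$ in the pulled-back complement is $1$. Neither is guaranteed: the hypothesis only says that $E$ meets the exceptional locus over $Z$, which on $W$ may contain curves other than $C_W$, and the crepant coefficient $a$ of $C_W$ can be $<1$ (even negative), in which case your degree argument (``the only $6$-complement of $(\frac{1}{2},\frac{2}{3},\frac{4}{5})$ is $(\frac{1}{2},\frac{2}{3},\frac{5}{6})$'') is unavailable because the restricted divisor satisfies $6(K_C+B^c_C)\not\sim 0$. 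The paper's proof avoids both issues: it adds an exceptional anti-ample divisor $\lambda T_W$ to raise the coefficient of $C_W$ to one, applies the Koll\'ar--Shokurov connectedness theorem to conclude that the resulting pair is not plt along $C_W$ (so $B^c_C$ acquires a point of coefficient $\geq 1$ without ever needing $E_W\cap C_W\neq\emptyset$), observes that $K_C+B^c_C$ is anti-nef since $T_W$ is anti-ample, and then concludes with \cite[Lemma 2.9]{CTW} together with \cite[Corollary 3.10]{SS} in the two cases (anti-ample, respectively trivial), with no classification of $B_C$ whatsoever. To repair your argument you would need to replace the Watanabe/Remark \ref{rem:surfaces_char_5} step by an argument of this kind that works for arbitrary coefficients.
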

\noindent Note that we do not assume that $B$ has standard coefficients.
\begin{proof}
As in the proof of Proposition \ref{prop:hacon_xu_surfaces}, it is enough to show that $(W,C_W + B_W)$ is purely globally F-regular over $Z$, and so by F-adjunction (see \cite[Lemma 2.10]{HW17}) it is enough to show that $(C, B_{C})$ is globally F-regular, where $K_{C} + B_{C} = (K_W + C_W + B_W)|_{C}$ and $C$ is identified with $C_W$.

By pulling back the complement to $Y$ and pushing down on $W$, we obtain a sub-lc pair $(W, aC_W+E_W+B^c_W)$ for a (possibly negative) number $a \in \Q$  such that $6(K_W+aC_W+E_W+B^c_W)\sim _Z 0$, a non-exceptional irreducible curve $E_W$ intersecting the exceptional locus over $Z$, and an effective $\Q$-divisor $B^c_W$ such that $E_W + B^c_W \geq B_W$. Let $T_W$ be an effective exceptional anti-ample $\Q$-divisor on $W$ and let $\lambda \geq 0$ be such that the coefficient of $C_W$ in $aC_W+\lambda T_W$ is one. By the Koll\'ar-Shokurov connectedness theorem (see e.g.\ \cite[Theorem 5.2]{tanaka16_excellent}), the pair $(W, aC_W + \lambda T_W + E_W+B^c_W)$ is not plt along $C_W$. In particular, $B^c_{C}$ contains a point with coefficient at least one, where 
\[
(K_W + aC_W + \lambda T_W + E_W + B^c_W)|_{C} = K_{C} + B^c_{C}.
\] 
Since $T_W$ is anti-ample over $Z$, we have that $K_{C} + B^c_{C}$ is anti-nef. In particular, there exists a $\Q$-divisor $B_{C} \leq B
'_C \leq B^c_C$ such that $(C, B'_C)$ is plt (but not klt) and $-(K_C+B'_C)$ is nef. 

If $-(K_C+B'_C)$ is ample, then $(C, B'_C)$ is purely F-regular by \cite[Lemma 2.9]{CTW} (applied to perturbations of $(C,B'_C)$), and so $(C,B_C)$ is globally F-regular. 
If $-(K_C+B'_C)$ is trivial, then $a=1$, $\lambda=0$, $6(K_C+B^c_C) \sim 0$, and $(C,B^c_C)$ is plt (but not klt). Since $\mathrm{GCD}(p,6)=1$, \cite[Lemma 2.9]{CTW} implies that $(C,B^c_C)$ is globally F-split, and so $(C,B_C)$ is globally F-regular by \cite[Corollary 3.10]{SS}.
\end{proof}
\subsection{Dual complexes}
Let $(X,\Delta)$ be a three-dimensional dlt pair. Its dual complex $D(\Delta^{=1})$ is a simplex with nodes corresponding to irreducible divisors of $\Delta^{=1}$ and $k$-simplices between $k+1$ nodes corresponding to $k+1$ divisors containing a common codimension $k+1$ locus.

Let $\pi \colon Y \to X$ be a projective birational morphism such that $(Y,\Delta_Y)$ is dlt, where $K_Y + \Delta_Y = \pi^*(K_X+\Delta)$. In characteristic zero one can show, using the weak factorisation theorem, that $D(\Delta_Y^{=1})$ is homotopy equivalent to $D(\Delta^{=1})$. In characteristic $p>0$, the weak factorisation theorem is not known to hold, but a similar result may be obtained by running an MMP and using the proof of \cite[Theorem 19]{dFKX} (cf.\ \cite[Subsection 2.3]{Nakamura19}). 

For the convenience of the reader, we give a direct proof of a consequence of the above result, one, that we will need later. Here, we say that an irreducible divisor $D$ in $\Delta^{=1}$ is an \emph{articulation point}, if $\Delta^{=1} - D$ is disconnected. 
\begin{lemma} \label{lemma:articulation_points} Let $(X,\Delta)$ be a $\Q$-factorial dlt threefold over a perfect field and let $\pi \colon Y \to X$ be a projective birational morphism such that $(Y,\pi^{-1}_*\Delta + E)$ is dlt, where $E$ is the exceptional locus of $\pi$. Write $K_Y + \Delta_Y = \pi^*(K_X+\Delta)$. Let $S$ be an irreducible divisor in $\Delta^{=1}$, and let $S_Y$ be its strict transform. If $S_Y$ is an articulation point, then so is $S$.
\end{lemma}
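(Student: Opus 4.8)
The statement compares articulation points of $\Delta^{=1}$ on $X$ with those on a $\Q$-factorial dlt modification $Y$. The cleanest route is to argue by contradiction: suppose $S_Y$ is an articulation point of $\Delta_Y^{=1}$ but $S$ is not an articulation point of $\Delta^{=1}$, so $\Delta^{=1} - S$ is connected while $\Delta_Y^{=1} - S_Y$ is disconnected. First I would set up the dual-complex picture: $D(\Delta^{=1})$ and $D(\Delta_Y^{=1})$ are simplicial complexes, and the assertion ``$D$ is an articulation point'' is exactly the statement that removing the corresponding vertex (and its stars) disconnects the complex in the relevant sense. So the statement to prove is essentially: if removing the vertex $[S_Y]$ disconnects $D(\Delta_Y^{=1})$, then removing $[S]$ disconnects $D(\Delta^{=1})$.

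\textbf{Key steps.} The main input should be that $D(\Delta_Y^{=1})$ and $D(\Delta^{=1})$ have the same homotopy type, or at least that there is a ``collapsing'' map from the former to the latter which is a homotopy equivalence and which sends $[S_Y]$ to $[S]$ and the strata of $\Delta_Y^{=1}$ contracted by $\pi$ to strata inside the star of some divisor. Concretely, I would proceed as follows. (i) Since $(Y,\Delta_Y)$ is $\Q$-factorial dlt and $K_Y+\Delta_Y = \pi^*(K_X+\Delta)$, every $\pi$-exceptional divisor $E_i$ appears in $\Delta_Y$ with coefficient $1$ (by the negativity lemma, as $(X,\Delta)$ is dlt hence the discrepancies of exceptional divisors over $X$ are $> -1$, but here equality $K_Y+\Delta_Y=\pi^*(K_X+\Delta)$ forces them to be exactly $-1$ if they are extracted — one needs to be slightly careful, but the hypothesis $(Y,\pi^{-1}_*\Delta+E)$ dlt with $E$ the full exceptional locus is exactly saying $\Delta_Y = \pi^{-1}_*\Delta + E$). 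So $\Delta_Y^{=1} = \pi^{-1}_*(\Delta^{=1}) + E$, i.e. the non-exceptional part of $\Delta_Y^{=1}$ is the strict transform of $\Delta^{=1}$. (ii) Run a $(K_Y+\Delta_Y)$-MMP over $X$: this is a $\pi^*(K_X+\Delta)$-MMP, hence every step is $\pi$-trivial, and (by the usual argument, e.g. \cite[Theorem 19]{dFKX}) each step is either a divisorial contraction of one of the $E_i$ or a flip/flop, and at the level of dual complexes it induces a collapse or an isomorphism; the MMP terminates with $X$ itself (since $X$ is $\Q$-factorial and the MMP over $X$ of $\pi^*(K_X+\Delta)$ contracts all exceptional divisors). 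Thus $D(\Delta^{=1})$ is obtained from $D(\Delta_Y^{=1})$ by a sequence of such moves. (iii) Now I track what happens to an articulation point through a single MMP step. A divisorial contraction removes one vertex $[E_i]$; since $E_i$ has coefficient $1$ and the step is $\pi$-trivial, the link of $[E_i]$ in the dual complex is collapsible/contractible onto a point, so removing $[E_i]$ does not change connectivity of the complement of any \emph{other} vertex, in particular not of $[S]$ (one must check $[E_i] \neq [S]$, which is clear since $S$ is the strict transform of an irreducible divisor on $X$, hence not exceptional). A flip or flop does not change the set of vertices at all and changes the complex only inside the star of a divisor, again preserving ``$[S]$ is an articulation point'' in each direction we need. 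Putting these together: removing $[S_Y]$ disconnects $D(\Delta_Y^{=1})$ $\Rightarrow$ removing $[S]$ disconnects $D(\Delta^{=1})$, contradiction, completing the proof.

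\textbf{The main obstacle.} The delicate point is step (iii): controlling precisely how the combinatorial structure ``$[S]$ is an articulation point'' behaves under a single MMP step, especially a flip, where a stratum of $\Delta^{=1}$ can disappear and another appear. For a divisorial contraction of $E_i$ the argument is local and clean because $\pi$-triviality forces $-(K+\text{stuff})$ restricted to $E_i$ to be trivial over the base, so $(E_i, \Delta_{E_i})$ has a very restricted structure (a dlt surface with $\equiv 0$ anti-log-canonical class over a point), pinning down the link of $[E_i]$. For a flip the cleanest fix is to observe that a flip is $\pi$-trivial, hence it does not alter any of the divisors meeting over the flipping/flipped locus in a way that changes the homotopy type — this is exactly the content of the proof of \cite[Theorem 19]{dFKX}, which I would invoke rather than reprove. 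So in practice the proof reduces to: cite the dFKX argument to say $D(\Delta_Y^{=1}) \to D(\Delta^{=1})$ is a composition of elementary collapses (each deleting a non-$[S]$ vertex whose link is contractible) and isomorphisms, and then note that each such move preserves the property that deleting $[S]$ disconnects the complex. The honest ``hard part'' is just making precise that an elementary collapse along a vertex $v \neq [S]$ with contractible link cannot merge two components of (complex)$\,-\,[S]$ — which follows because any path through $v$ in the collapsed complex lifts to a path through the link of $v$, which is connected.
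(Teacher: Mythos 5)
There is a genuine gap, and it sits at the foundation of your step (ii). First, your step (i) is not correct as stated: the hypothesis that $(Y,\pi^{-1}_*\Delta+E)$ is dlt does \emph{not} say that $\pi$ is crepant for this boundary, so in general $\Delta_Y \neq \pi^{-1}_*\Delta + E$; exceptional divisors of positive log discrepancy over $(X,\Delta)$ occur in $\Delta_Y$ with coefficient $<1$ (or even $\leq 0$). Second, the MMP you propose to run is the $(K_Y+\Delta_Y)$-MMP over $X$, but $K_Y+\Delta_Y=\pi^*(K_X+\Delta)$ is relatively numerically trivial over $X$, so this MMP has no negative extremal rays and never starts. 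The MMP one must run (as the paper does) is for $K_Y+\pi^{-1}_*\Delta+E$, which over $X$ is $\Q$-linearly equivalent to a positive combination of exactly those exceptional divisors \emph{not} contained in $\Delta_Y^{=1}$; consequently it contracts precisely those divisors and terminates not at $X$ but at a crepant model $\pi'\colon X'\to X$ that still carries all the log-discrepancy-zero exceptional divisors. Your assertion that the MMP ``terminates with $X$ itself'' fails exactly in the interesting case, namely when $(X,\Delta)$ has lc centres of codimension $\geq 2$, which is when $\Delta_Y^{=1}$ has exceptional components at all.

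Because of this, your argument is missing the final descent from the minimal crepant model to $X$, which cannot be done by contracting divisors (those remaining divisors are crepant and cannot be contracted by any $(K+\Delta)$-trivial MMP). In the paper this step is handled differently: one first checks, via the negativity lemma and \cite[Lemma 3.38]{km98}, that no lc centre lies in a flipped locus, so the nodes of the dual complex are preserved and $S_{X'}$ remains an articulation point on $X'$; then one shows $\pi'(\Delta^{=1}_{X'}-S_{X'})\subseteq \Supp(\Delta^{=1}-S)$ (images of lc centres are lc centres, and $(X,\Delta)$ is dlt) and that $\pi'$ restricted to $\Supp(\Delta^{=1}_{X'}-S_{X'})$ has connected fibres, since $\mathrm{Exc}(\pi')\subseteq \Supp(\Delta^{=1}_{X'}-S_{X'})$; this gives directly that $\Delta^{=1}-S$ is disconnected iff $\Delta^{=1}_{X'}-S_{X'}$ is. Note also that the paper deliberately avoids invoking \cite[Theorem 19]{dFKX} or weak factorisation wholesale, since the latter is unavailable in characteristic $p$; the MMP steps it uses are supplied by \cite{HW17}. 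Your collapse-of-links heuristic in step (iii) is never needed in the correct argument, because coefficient-one exceptional divisors are never contracted; the divisors that are contracted are not vertices of $D(\Delta_Y^{=1})$ in the first place.
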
 
\begin{proof}
Assume that $S_Y$ is an articulation point of $D(\Delta_Y^{=1})$ and let $h \colon Y \dashrightarrow X'$ be the output of a $(K_Y + \pi^{-1}_*\Delta + E)$-MMP over $X$ (which we can run by \cite[Theorem 1.1]{HW17}). Further, let 
\[
\Delta_{X'} := h_*\Delta_Y = h_*( \pi^{-1}_* \Delta + E)
\]
and $S_{X'} := h_*S_Y$. First, we show that $S_{X'}$ is an articulation point of $D(\Delta_{X'}^{=1})$. To this end, we claim that there is a natural inclusion of dual complexes 
\[
D(\Delta_{X'}^{=1}) \subseteq D(\Delta_Y^{=1})
\]
which identifies the nodes of these dual complexes. Indeed, decompose $h \colon Y \dashrightarrow X'$ into flips and divisorial contractions of the $(K_Y + \pi^{-1}_*\Delta + E)$-MMP: 
\[
	Y =: Y_1 \overset{h_1}{\dashrightarrow} Y_2 \overset{h_2}{\dashrightarrow} \ldots \overset{h_{k-1}} \dashrightarrow Y_k:= X'.
\]
Denote the strict transforms of $\Delta_Y$ by $\Delta_{Y_1}, \ldots, \Delta_{Y_k}$ and $\Delta_{X'}$, respectively, and the projections to $X$ by $\pi_i \colon Y_i \to X$. Note that $K_Y + \pi^{-1}_*\Delta + E \sim_{X, \Q} a_1E_1 + \ldots +a_m E_m$ for all exceptional divisors $E_1, \ldots, E_m \not \subseteq \Delta^{=1}_Y$ and $a_1, \ldots, a_m>0$, and so, by the negativity lemma, this MMP contracts exactly those divisors in $E$ which are not contained in $\Delta_Y^{=1}$. In particular, it preserves the nodes of $D(\Delta_Y^{=1})$. 

Set $\overline{\Delta}_{Y_l} = (\pi^{-1}_l)_*\Delta + \mathrm{Exc}(\pi_l)$. Note that there is no log canonical centre of $(Y_l, \Delta_{Y_l})$ contained in $\mathrm{Exc}((h_{l-1})^{-1})$ by the negativity lemma. Indeed,  suppose that there is such a centre $Z$. Then $Z$ is also a log canonical centre of $(Y_l, \overline{\Delta}_{Y_l})$, and there exists an exceptional divisorial place $E_Z$ over $Y_l$ with centre $Z$ such that $a_{E_Z}(Y_l,\overline{\Delta}_{Y_l})=0$. Since $h_{l-1}$ is not an isomorphism over the generic point of $Z$, \cite[Lemma 3.38]{km98} implies that 
\[
0 \leq a_{E_Z}(Y_{l-1}, \overline{\Delta}_{Y_{l-1}}) < a_{E_Z}(Y_l,\overline{\Delta}_{Y_l})=0,
\]
which is a contradiction. 

Now, projecting by $h_{l-1}$ provides a bijection
\[
\{Z_{l-1} \in \mathrm{LCC}(Y_{l-1}, \Delta_{Y_{l-1}}) \mid Z_{l-1} \not \subseteq \mathrm{Exc}(h_{l-1}) \} \leftrightarrow  \{Z_l \in \mathrm{LCC}(Y_l, \Delta_{Y_l}) \} 
\] 
for any $1 < l \leq k$. In particular, this induces an inclusion $D(\Delta_{Y_l}^{=1}) \subseteq D(\Delta_{Y_{l-1}}^{=1})$, and so the claim holds and $S_{X'}$ is an articulation point.

Let $\pi' \colon X' \to X$ be the induced morphism. Note that $K_{X'} + \Delta_{X'} = (\pi')^*(K_X+\Delta)$ and the divisor $\mathrm{Exc}(\pi')$ is contained in $\Delta_{X'}^{=1}$. First, we show that
\[
\pi'(\Delta^{=1}_{X'}-S_{X'}) \subseteq \Supp(\Delta^{=1}-S).
\] 
To this end pick an irreducible divisor $D \subseteq \Supp(\Delta_{X'}^{=1}-S_{X'})$. Then $\pi'(D)$ is a log canonical centre of $(X,\Delta)$, and so, since $(X,\Delta)$ is dlt, there exists a divisor $S' \subseteq \Supp(\Delta^{=1}-S)$ such that $\pi'(D) \subseteq S'$. This shows the above inclusion.

Now, note that
\[
\pi'|_{\Supp(\Delta^{=1}_{X'}-S_{X'})} \colon \Supp(\Delta^{=1}_{X'}-S_{X'}) \to \Supp(\Delta^{=1}-S)
\] 
has connected fibres. { Indeed, $\mathrm{Exc}(\pi') \subseteq \Supp(\Delta^{=1}_{X'}-S_{X'})$ and $\pi'$ has connected fibres.} Therefore, $\Delta^{=1}_{X'}-S_{X'}$ is disconnected if and only if so is $\Delta^{=1}-S$. In particular, $S$ is an articulation point.


\end{proof}

\section{Complements on surfaces}
The following proposition is fundamental in showing that flips admitting a qdlt $6$-complement exist. Note that every two-dimensional log pair with standard coefficients and which is log Fano with respect to a projective birational map admits a relative $m$-complement for $m\in \{1,2,3,4,6\}$ (cf.\ \cite[Theorem 3.2]{hx13}).
\begin{proposition} \label{proposition:no_klt_complements} Let $(S,B)$ be a two-dimensional klt pair with standard coefficients defined over a perfect field of characteristic $p>3$ and let $S \to T$ be a birational contraction such that $-(K_S+B)$ is relatively nef but not numerically trivial.  Assume that $(S,B)$ is not relatively globally F-regular over $T$. 

Then every $6$-complement of $(S,B)$ is non-klt and has a unique non-klt valuation which is exceptional over $T$.
\end{proposition}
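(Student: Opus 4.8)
The plan is to run the machinery of Lemma \ref{lemma:surface_mmp_lemma} and then reduce the statement to a question about complements on $\mathbb{P}^1$, using Remark \ref{rem:surfaces_char_5} to identify the one bad case. First I would apply Lemma \ref{lemma:surface_mmp_lemma} to $(S,B) \to T$, obtaining the exceptional curve $C$, the extraction $g \colon Y \to X$ (here $X = S$), and the canonical model $h \colon Y \to W$ with $(W, C_W + B_W)$ plt and $-(K_W + C_W + B_W)$ ample over $T$. Since $(S,B)$ is not relatively globally F-regular over $T$, Proposition \ref{prop:hacon_xu_surfaces} and Remark \ref{rem:surfaces_char_5} force $p = 5$ and $K_C + B_C = \frac{1}{2}P_1 + \frac{2}{3}P_2 + \frac{4}{5}P_3$, where $K_C + B_C = (K_W + C_W + B_W)|_C$ and $C$ is identified with $C_W$; in particular $(C, B_C)$ fails to admit an $m$-complement for $m \leq 4$, and its unique $6$-complement replaces the coefficient $\frac45$ by $\frac56$, so that $(C, B_C^c)$ is strictly log canonical but not klt.

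Next I would take an arbitrary $6$-complement $(S, B^c)$ of $(S,B)$ and pull it back to $Y$ and push it forward to $W$, obtaining a sub-lc pair $(W, a C_W + B_W^c)$ with $6(K_W + aC_W + B_W^c) \sim_{T} 0$ and $B_W^c \geq B_W$, for some rational $a \le 1$ (with $a = 1$ exactly when $C$ is a non-klt place of the complement). Restricting to $C_W$ and using adjunction one sees that $(K_W + aC_W + B_W^c)|_C = K_C + aP_0 + B_C'^{c}$ for a point $P_0$ (the possible non-klt place along $C$) with $B_C'^{c} \ge B_C$; since $-(K_W+aC_W+B_W^c)|_C \equiv 0$, this is a $6$-complement of $(C, B_C)$, hence it must be the unique one with the $\frac56$ coefficient. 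Comparing, one deduces $a = 1$, so $C$ is a non-klt place of $(S, B^c)$; moreover $K_C + B_C^c \sim_{T} 0$ forces $B_C^c$ to be exactly the strict log canonical boundary, which shows $(C, B_C^c)$ — and hence $(W, C_W + B_W^c)$ along $C_W$, and thus $(S, B^c)$ — is not klt.

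For the uniqueness and exceptionality of the non-klt valuation: by the Koll\'ar–Shokurov connectedness theorem the non-klt locus of $(S, B^c)$ is connected, and after extracting $C$ on $Y$, the only non-klt place meeting the fibres over $T$ is $C$ itself, since $(Y, C + B_Y)$ is plt by Lemma \ref{lemma:surface_mmp_lemma}(2) and $-(K_Y + C + B_Y) = g^*(\cdot) + (\text{exceptional, nef over }Z)$ has positive discrepancies elsewhere; any other non-klt place would have to be disjoint from the fibre, contradicting that the complement is relative over $T$ and $-(K_S+B)$ is nef. Finally, $C$ is exceptional over $T$: if $C$ were a divisor on $S$ descending to a curve on $T$, then by Lemma \ref{lemma:surface_mmp_lemma} the pair $(W, C_W + B_W)$ would be plt with $-(K_W + C_W + B_W)$ ample and $C_W$ non-contracted, forcing $(C, B_C)$ to be globally F-regular by Watanabe's criterion (cf.\ Proposition \ref{prop:hacon_xu_surfaces}), contradicting $B_C = \frac12 P_1 + \frac23 P_2 + \frac45 P_3$ in characteristic five.

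**Main obstacle.** The hard part will be the bookkeeping needed to show $a = 1$ and to pin down the restricted different precisely: one must control how the coefficient of $C_W$ in the pulled-back complement interacts with the adjunction formula on $C$, and rule out the possibility that the non-klt place of the complement is some other exceptional valuation rather than $C$. This is exactly where connectedness of log canonical centres and the negativity-lemma discrepancy comparison (as in the proof of Lemma \ref{lem:surface_char_5_with_special_complement}) do the essential work.
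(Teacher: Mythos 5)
Your overall framing matches the paper's: apply Lemma \ref{lemma:surface_mmp_lemma}, use Remark \ref{rem:surfaces_char_5} to get $B_C=\frac12 P_1+\frac23 P_2+\frac45 P_3$, transport an arbitrary $6$-complement to $W$ as a sub-lc pair $(W,aC_W+B_W^c)$ with $B_W^c\geq B_W$, and finish with adjunction plus Koll\'ar--Shokurov connectedness once $a=1$ is known. The gap is in how you prove $a=1$. First, the adjunction step is malformed: you restrict $K_W+aC_W+B_W^c$ to $C$ and write the result as $K_C+aP_0+B_C'^{c}$, but the different is only defined when $C_W$ appears with coefficient one; for $a<1$ the correct expression is $K_C+\mathrm{Diff}_C(B_W^c)-(1-a)\,C_W|_C$, where $C_W|_C$ is a $\Q$-divisor class of negative degree, and this is not a $6$-complement of $(C,B_C)$ in any sense that lets you invoke uniqueness of the $\frac56$-complement.

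Second, and more seriously, your comparison argument fails precisely when $(B^c-B)\cdot C=0$, i.e.\ when every component of the extra boundary is disjoint from $C$. Then $\mathrm{Diff}_C(B_W^c)=B_C$, and the identity $(K_W+aC_W+B_W^c)\cdot C_W=0$ only says $(1-a)C_W^2=-\tfrac1{30}$, which is perfectly consistent with $a<1$: nothing visible on $C$ distinguishes $a=1$ from $a<1$. The paper needs two ingredients you do not supply: the arithmetic Lemma \ref{lemma:auxiliary_complements} (using that the coefficients of the different lie in $\frac{1}{6\det\Gamma_i}\ZZ$ with $\det\Gamma_i\le 5$, so a nonzero extra term $\Delta$ forces $(K_W+C_W+B_W^c)\cdot C_W=0$), which settles the case $(B^c-B)\cdot C\neq 0$; and, in the case $(B^c-B)\cdot C=0$, the auxiliary contraction of a second exceptional curve $E$ meeting $C$ (which exists because $\Supp(B^c-B)$ contains a non-exceptional curve by the negativity lemma, so the exceptional locus over $T$ is not irreducible), which creates a nonzero $\Delta$ on the contracted model $W_1$ where Lemma \ref{lemma:auxiliary_complements} then applies. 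Without these steps the claim $a=1$ is unproven. Once $a=1$ is granted, your connectedness argument for uniqueness is essentially the paper's, and the exceptionality over $T$ of the non-klt valuation is automatic since it is $C$ itself, which is exceptional by construction; the final argument you give via Watanabe's criterion is not needed.
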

\begin{proof}
We work over a sufficiently small neighbourhood of a point $t \in T$. By Lemma \ref{lemma:surface_mmp_lemma}, there exist an irreducible, exceptional over $T$, curve $C$ on a blow-up of $S$ and projective birational maps $g \colon Y \to S$ and $h \colon Y \to W$ over $T$ such that
\begin{enumerate}
	\item $g$ extracts $C$ or is the identity if $C \subseteq S$,
	\item $(Y, C+ B_Y)$ is plt,
	\item $(W, C_W + B_W)$ is plt and $-(K_{W}+C_W+B_W)$ is ample over $T$,
\end{enumerate} 
where $C_W := h_*C \neq 0$, $B_W := h_*B_Y$, and $K_{Y}+bC + B_Y = g^*(K_S+B)$ for $C \not \subseteq \Supp B_Y$.

By Remark \ref{rem:surfaces_char_5}, $(K_{W} + C_W + B_W)|_{C_W} = K_{C_W} + \frac{1}{2}P_1 + \frac{2}{3}P_2 + \frac{4}{5}P_3$ for some three distinct points $P_1$, $P_2$, and $P_3$.  \\

Now, let $(S,B^c)$ be any $6$-complement of $(S,B)$. By the negativity lemma $\Supp(B^c - B)$ contains a non-exceptional curve. Let $K_{Y} + aC + B_Y^c = g^*(K_S + B^c)$, where $C \not \subseteq \mathrm{Supp}\, B_Y^c$, and let $B_W^c := h_*B_Y^c$.  Since $6(K_S+B^c)\sim_{T} 0$ is lc, we get that 
\[
(W,aC_W + B_W^c)
\]
is sub-lc and $6(K_{W}+a{C_W}+B_W^c) \sim_{T} 0$. In particular $6B_W^c$ is an integral divisor. Moreover, $B_W^c \geq B_W$ as $B^c \geq B$.\\


To prove the proposition it is now enough to show that $a=1$. Indeed, in this case $-(K_{W}+C_W+B_W^c) \sim_{\Q, T}0$ and by the Koll\'ar-Shokurov connectedness lemma, the non-klt locus of $(W,C_W+B_W^c)$ is connected. The only $6$-complement of 
\[
(C_W,\frac{1}{2}P_1 + \frac{2}{3}P_2 + \frac{4}{5}P_3)
\]
is $(C_W,\frac{1}{2}P_1 + \frac{2}{3}P_2 + \frac{5}{6}P_3)$, so $(W,C_W+B_W^c)$ is plt along $C_W$ by adjunction, and connectedness of the non-klt locus implies that $(W,C_W+B_W^c)$ is in fact plt everywhere. In particular, $(S,B^c)$ admits a unique exceptional over $T$ non-klt valuation.

In order to prove the proposition, we assume that $a<1$ and derive a contradiction. We will not need to refer to $(S,B)$ or $(Y,aC+B_Y)$ any more, so, for ease of notation, we replace $C_W$, $B_{W}$, and $B^c_W$  by $C$, $B$, and $B^c$, respectively.

If  $(B^c-B) \cdot C \ne  0$, then Lemma \ref{lemma:auxiliary_complements} applied to $(W,C+B^c)$ implies that $(K_W+C+B^c) \cdot C = 0$. This is impossible, because
\[ 
(K_W+C+B^c) \cdot C < (K_W + aC + B^c) \cdot C = 0.
\]
Hence, we can assume that $(B^c - B) \cdot C = 0$. Since $\Supp(B^c-B)$ contains a non-exceptional curve, the exceptional locus over $T$ cannot be irreducible, and so there exists an irreducible exceptional curve $E \neq C$ such that $E \cap C \neq \emptyset$. Since  $E\cong \mathbb P ^1$ and $E^2<0$, we may contract $E$ over $T$. Let $f \colon W \to W_1$ be a contraction of $E$, and let $C_1$, $B^c_1$ be the strict transforms of $C$ and $B^c$. We have that
\[
(K_W + C + B^c) \cdot E > (K_W + aC + B^c) \cdot E = 0,
\] 
and hence for some $t>0$ and with the natural identification $C \simeq C_1$:
\begin{align*}
(K_{W_1} + C_1 + B^c_1)|_{C_1} &= f^*(K_{W_1} + C_1 + B^c_1)|_C \\
&= (K_W + C + B^c + tE)|_C \\
&\geq K_C + \frac{1}{2}P_1 + \frac{2}{3}P_2 + \frac{4}{5}P_3 + tE|_C.   
\end{align*}
As before, $(K_{W_1} + C_1 + B^c_1) \cdot C_1 < (K_{W_1}+aC_1 + B^c_1) \cdot C_1 = 0$. By applying Lemma \ref{lemma:auxiliary_complements} to $(W_1,C_1+B^c_1)$, we get a contradiction again.
\end{proof}

In the following result, it is key that $\Delta$ is non-zero.
\begin{lemma} \label{lemma:auxiliary_complements} Let $(S,C+B)$ be a two dimensional log pair and let $f \colon S \to Z$ be a projective birational morphism such that the irreducible normal divisor $C$ is exceptional and $(K_S+C+B) \cdot C \leq 0$. Assume that $6B$ is an integral divisor and
\[
B_C = \frac{1}{2}P_1 + \frac{2}{3}P_2 + \frac{4}{5}P_3 + \Delta
\]
for distinct points $P_1,P_2,P_3 \in C$ and a non-zero effective $\Q$-divisor $\Delta$, where $(K_S+C+B)|_C = K_C + B_C$. Then $(K_S+C+B) \cdot C = 0$.
\end{lemma}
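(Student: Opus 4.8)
The plan is to compare degrees on $C$. Being exceptional over $Z$ for a projective birational morphism, $C$ is a projective curve, and being normal it is smooth; after base changing to $\bar k$ (harmless as $k$ is perfect), the inequality $\deg_C(K_C+B_C)=(K_S+C+B)\cdot C\le 0$ together with $\deg_C B_C\ge\frac12+\frac23+\frac45>1$ forces $\deg K_C=-2$, so $C\cong\p^1$. Hence
\[
(K_S+C+B)\cdot C=\deg_C(K_C+B_C)=-2+\tfrac12+\tfrac23+\tfrac45+\deg\Delta=-\tfrac1{30}+\deg\Delta,
\]
so the hypothesis gives $\deg\Delta\le\frac1{30}$ while $\Delta\ne 0$. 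It is therefore enough to show that every coefficient of $\Delta$ is $\ge\frac1{30}$: then $\deg\Delta=\frac1{30}$ and $(K_S+C+B)\cdot C=0$.

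The input I would use is the local structure of the surface different. Fix a closed point $q\in C$, and let $n_q\ge 1$ be the $\Q$-factorial index of $S$ at $q$, so $n_q=1$ precisely when $q$ is a regular point of $S$, and $n_qD$ is Cartier at $q$ for every Weil divisor $D$ (see \cite{KollarSing}; cf.\ \cite[Lemma 2.2]{CTW}). As $6B$ is integral, $6n_q(K_S+C+B)$ is Cartier at $q$, so restricting to the smooth curve $C$ shows that the coefficient $d_q$ of $B_C$ at $q$ lies in the lattice $\frac1{6n_q}\ZZ$. Moreover, by adjunction on surfaces $d_q\ge\Diff_C(0)_q\ge 1-\frac1{n_q}$.

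Now let $q\in\Supp\Delta$ and write $d_q=\lambda_q+\delta_q$, where $\delta_q>0$ is the coefficient of $\Delta$ at $q$ and $\lambda_q\in\{0,\frac12,\frac23,\frac45\}$ is the nominal coefficient at $q$ (equal to $\frac12,\frac23,\frac45$ at $P_1,P_2,P_3$ and to $0$ elsewhere), so $d_q>\lambda_q\ge 0$. If $1-\frac1{n_q}\ge\lambda_q+\frac1{30}$ — which holds once $n_q\ge 2$ when $\lambda_q=0$, $n_q\ge 3$ when $\lambda_q=\frac12$, $n_q\ge 4$ when $\lambda_q=\frac23$, and $n_q\ge 6$ when $\lambda_q=\frac45$ — then $\delta_q\ge\frac1{30}$ by the lower bound on $d_q$. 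There remain only finitely many small values of $n_q$; in each of them a direct check shows that the smallest element of $\frac1{6n_q}\ZZ$ that is strictly greater than $\lambda_q$ is $\ge\lambda_q+\frac1{30}$ (the tight case being $\lambda_q=\frac45$ with $n_q\le 5$, where this element is $\frac56=\frac45+\frac1{30}$), and since $d_q>\lambda_q$ lies in $\frac1{6n_q}\ZZ$ we again get $\delta_q\ge\frac1{30}$.

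Summing over $\Supp\Delta$ gives $\deg\Delta\ge\frac1{30}$, hence $\deg\Delta=\frac1{30}$ and $(K_S+C+B)\cdot C=0$. The main thing to get right is the middle step — the exact shape of $\Diff_C(0)_q$ and the Cartier-index bookkeeping pinning $d_q$ to $\frac1{6n_q}\ZZ$ — together with keeping the small-$n_q$ arithmetic honest; once these are in place, the degree identity and the elementary lattice estimates conclude.
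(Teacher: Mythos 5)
Your proposal is correct and is essentially the paper's own argument: the same two inputs — the degree identity on $C$ giving a budget of $\frac{1}{30}$, and the structure of the different via the Cartier index of $6(K_S+C+B)$ at each point (\cite[Corollary 3.45]{KollarSing} together with \cite[Lemma 2.2]{CTW}) — are merely organized as a direct per-point lower bound $\delta_q\ge\frac{1}{30}$ on the excess coefficients, where the paper instead argues by contradiction, first pinning the coefficients at $P_1,P_2,P_3$ to exactly $\frac12,\frac23,\frac45$ and then noting that any further point of $\Supp\Delta$ contributes at least $\frac16$ (smooth point) or $\frac12$ (singular point). The one step to make explicit is the one you flagged: the index $n_q$ and the bound $d_q\ge 1-\frac{1}{n_q}$ are available because a point with $d_q<1$ is plt by surface inversion of adjunction (exactly as the paper observes before invoking \cite[Corollary 3.45]{KollarSing}), hence a cyclic quotient point with $n_q=\det\Gamma_q$, while at a non-plt (possibly non-$\Q$-factorial) point one has $d_q\ge 1$, so $\delta_q\ge 1-\lambda_q\ge\frac15$ and your conclusion holds trivially there.
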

\begin{proof}
By contradiction assume that $(K_S+C+B) \cdot C < 0$. Since $\frac{1}{2}+\frac{2}{3}+\frac{4}{5} = 2 - \frac{1}{30}$, we get
\[
-\frac{1}{30} < (K_S+C+B) \cdot C < 0.
\]
Set $y_i := \frac{1}{2}, \frac{2}{3}, \frac{4}{5}$ for $i=1,2,3$, respectively, and write 
\[
B_C = x_1P_1 + x_2P_2 + x_3P_3 + \Delta',
\]
where $\Delta' \geq 0$ and $P_i \not \subseteq \Supp \Delta'$. We have that
\[
y_i \leq x_i < y_i + 1/30,
\]
and so $x_i < \frac{4}{5} + \frac{1}{30} = \frac{5}{6}$. Further, $\deg \Delta' < \frac{1}{30}$. By adjunction, $(S,C+B)$ is plt along $C$.

Let $\Gamma_i$ be the intersection matrix of the singularity of $S$ at $P_i$. Recall that $\det \Gamma_i$ is the $\mathbb Q$-factorial index of $P_i$, i.e. for any Weil divisor $D$, it holds that $(\det \Gamma_i)D$ is Cartier at $P_i$  (see \cite[Lemma 2.2]{CTW}). By \cite[Corollary 3.45]{KollarSing}
\[
x_i = 1 - \frac{1}{\det \Gamma_i} + \frac{k}{6\det \Gamma_i}
\] 
for some   integer $k\geq 0$; in particular it is of the form $\frac{m}{6 \det \Gamma_i}$. Moreover, $\det \Gamma_i \leq 5$, as otherwise $x_i \geq \frac{5}{6}$. 

We claim that $x_i=y_i$. If $i \in \{1,2\}$, then $6(\det \Gamma_i)y_i \in \N$ and so either $x_i=y_i$ or 
\[
x_i \geq y_i + \frac{1}{6\det \Gamma_i} \geq y_i + \frac{1}{30}
\]
which is a contradiction.
If $i = 3$, then since $\det \Gamma_3\leq 5$ it is easy to see that
\[
\frac{5\det \Gamma_3-1}{6\det \Gamma_3} \leq \frac{4}{5} \leq x_3 < \frac{5\det \Gamma_3}{6\det \Gamma_3} = \frac{5}{6}.
\]
Since $x_i=\frac m{6\det \Gamma_3}$, it follows that  $x_3=y_3=\frac{4}{5}$ and $\det \Gamma_3=5$.

Hence, $x_i=y_i$ for all $i \in \{1,2,3\}$ and $\Delta' = \Delta$.  In particular, either $\Supp \Delta$ is contained in the smooth locus of $S$ and $\deg \Delta \geq \frac{1}{6} \geq \frac{1}{30}$, or $\deg \Delta$ is bounded from below by the smallest standard coefficient: $1/2$.  In either case, this is a contradiction.
\end{proof}

\section{Lifting complements}
The new building blocks for the low characteristic MMP are flips admitting a qdlt $6$-complement. Therefore, it is fundamental to construct $6$-complements of flipping contractions. This is done by lifting them from divisorial centres as described by Theorem \ref{thm:lift_complements}. Before we move on to the proof of this result, we need to show some results about Frobenius stable sections for $\Q$-divisors.

\subsection{Frobenius stable sections and integral adjunction}
In this subsection we assume the existence of log resolutions of singularities admitting relatively anti-ample effective divisors. In particular, the results of this section are valid up to dimension three. Further, we denote the Frobenius stable sections of a line bundle $L$ with respect to the Frobenius trace map associated to $(X,\Delta)$ by $S^0(X, \Delta; L)$. Note that this space is often denoted by $S^0(X, \sigma(X,\Delta) \otimes L)$. We refer to \cite{schwede14} and \cite{hx13} for the definition and a comprehensive treatment of $S^0$.

Let $(X,\Delta)$ be a positive characteristic log Fano pair. Fix $m \in \N$ and set $A := -(K_X+\Delta)$. We want to study the sections in $H^0(X, \lfloor mA \rfloor)$, which are Frobenius stable with respect to a carefully chosen boundary.

 If $\Delta$ has standard coefficients, then the theory of complements gives a natural candidate: $\Phi := \{(m+1)\Delta\}$. Indeed, in this case, $\lfloor mA \rfloor -(K_X+\Phi) = -(m+1)(K_X+\Delta)$ is ample (see (\ref{eq:s0_integral})), which suggests that one should look at the subspace
\[
S^0(X,\Phi; \lfloor mA \rfloor) \subseteq H^0(X, \lfloor mA \rfloor). 
\]

Since standard coefficients are not stable under log pull-backs or perturbations, we need to work in a more general setting. 
\begin{setting} \label{setting:s0_integral_adjunction}Fix a natural number $m \in \N$ and a perfect field $k$ of characteristic $p>0$. Let $(X,S+B)$ be a sub-log pair which is projective over an affine $k$-variety $Z$ and such that $S$ is a (possibly empty) reduced Weil divisor, $\lfloor B \rfloor \leq 0$, and $A := -(K_X+S+B)$ is nef and big. 

We are ready to define: 
\begin{align*}
	\Phi &:= S + \{(m+1)B\},\\
	D &:= \lceil mB \rceil - \lfloor (m+1)B \rfloor, \text{ and } \\ 
	L &:= \lfloor mA \rfloor + D.
\end{align*}
 For the sake of future perturbations, we choose an effective $\Q$-divisor $\Lambda$ with sufficiently small coefficients, no common components with $S$, and such that $K_X+S+B+\Lambda^m$ is of Cartier index non-divisible by $p>0$, where $\Lambda^m := \frac{1}{m+1}\Lambda$. Such $\Lambda$ exists by Remark \ref{rem:integral_s0_perturbations2}.
\end{setting}
 We call $D$ the \emph{defect} divisor and say that $(X,S+B)$ has \emph{zero defect} if $D=0$. Note that $(X,S+B)$ has zero defect when $B$ has standard coefficients. In general, since $\lfloor B \rfloor \leq 0$, we have 
 \begin{align*} D&=\lceil mB\rceil -\lfloor (m+1)B\rfloor \\ & = \lceil mB - (m+1)B+\{  (m+1)B\}\rceil  \\ &=\lceil -B +\{  (m+1)B\}\rceil \geq 0.\end{align*} Moreover,
\begin{align*} 
\lfloor mA \rfloor &= -m(K_X+S) - \lceil mB \rceil  \\
				   	&= -m(K_X+S) - \lfloor (m+1)B \rfloor - D \\
				   &= K_X + \Phi - (m+1)(K_X+S+B) - D,
\end{align*}
and so
\begin{equation} \label{eq:s0_integral}
L - (K_X+\Phi) = -(m+1)(K_X+S+B) = (m+1)A
\end{equation} 
is nef and big. In particular, $L - (K_X+\Phi+\Lambda) = -(m+1)(K_X+S+B+\Lambda^m)$, and so the Weil index of $K_X+\Phi+\Lambda$ is not divisible by $p$. 

\begin{defn} \label{definition:c0} With notation as above, define $C^0_{\Lambda}(X,S+B; L) := S^0(X, \Phi + \Lambda; L) \subseteq H^0(X, L)$. \end{defn}
\noindent By Noetherianity and the fact that $\Lambda$ is assumed to have sufficiently small coefficients, we can replace $\Lambda$ by any $\Lambda'$ satisfying the assumptions of  Setting \ref{setting:s0_integral_adjunction}, having sufficiently small coefficients, and such that $\Supp \Lambda' = \Supp \Lambda$.

\begin{remark} \label{rem:integral_s0_perturbations2} 
There always exists $\Lambda$ as in Setting \ref{setting:s0_integral_adjunction}. Indeed, we can assume that $K_X$ is such that $S \not \subseteq \Supp (A)$.  Pick a sufficiently ample Cartier divisor $M$, use Serre vanishing to find $M' \sim M$ vanishing along $\Supp (A)$ with high multiplicity but without vanishing along $S$, and set $\Lambda := (m+1)(M' +A)$. Moreover, given such $\Lambda$ we can replace it by $\epsilon \Lambda$ for some $0< \epsilon \ll 1$ by the same argument as in \cite[Lemma 2.10]{witaszek15}.
\end{remark}

The following lemma allows for calculating $C^0$ on a log resolution. 
\begin{lemma} \label{lem:integral_s0_birational} With notation as in Setting \ref{setting:s0_integral_adjunction} suppose that $(X,S+B)$ is plt and has zero defect. Let $\pi \colon Y \to X$ be a projective birational map and set $K_Y+S_Y+B_Y = \pi^*(K_X+S+B)$ with $S_Y:= \pi^{-1}_*S$. Then,
\[
C^0_{\Lambda_Y}(Y,S_Y+B_Y; L_Y) = C^0_{\Lambda}(X, S+B; L),
\] 
where $L_Y$ is defined for $(Y,S_Y+B_Y)$ as in Setting \ref{setting:s0_integral_adjunction}, and $\Lambda_Y := \pi^*\Lambda$.
\end{lemma}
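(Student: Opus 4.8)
The plan is to transport the construction along $\pi$, the crux being that the two sub\nobreakdash-pairs, together with all the divisors involved, coincide over the open locus where $\pi$ is an isomorphism, and that the Frobenius trace map is compatible with proper pushforward. First I would check that $(Y,S_Y+B_Y)$ with $\Lambda_Y:=\pi^*\Lambda$ satisfies Setting~\ref{setting:s0_integral_adjunction} for the same $m$: the divisor $S_Y=\pi^{-1}_*S$ is reduced, and $\lfloor B_Y\rfloor\le 0$ because $(X,S+B)$ is plt; $A_Y:=-(K_Y+S_Y+B_Y)=\pi^*A$ is nef and big; $\Lambda_Y$ is effective, has no component in common with $S_Y$, and (after first shrinking $\Lambda$ on $X$, which is harmless by the stabilisation remark following Definition~\ref{definition:c0}) has sufficiently small coefficients; and $K_Y+S_Y+B_Y+\Lambda_Y^m=\pi^*(K_X+S+B+\Lambda^m)$ has Cartier index dividing that of $K_X+S+B+\Lambda^m$, hence prime to $p$. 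Thus $\Phi_Y=S_Y+\{(m+1)B_Y\}$, $L_Y=\lfloor mA_Y\rfloor+D_Y$ and $C^0_{\Lambda_Y}(Y,S_Y+B_Y;L_Y)$ are defined; note that $(Y,S_Y+B_Y)$ need \emph{not} have zero defect, so the defect divisor $D_Y$ genuinely has to be carried along.

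Next, put $M_e:=(1-p^e)(K_X+\Phi+\Lambda)+p^eL$ and let $M_{e,Y}$ be its analogue on $Y$, and let $U\subseteq X$ be the big open locus over which $\pi$ is an isomorphism. From $K_X+\Phi+\Lambda=L+(m+1)(K_X+S+B+\Lambda^m)$ and the matching identity on $Y$ one reads off
\[
G:=L_Y-\pi^*L=M_{e,Y}-\pi^*M_e .
\]
I claim $G$ is $\pi$-exceptional with every coefficient $>-1$: along the strict transform of a prime divisor $C\subseteq X$ this coefficient equals $\operatorname{mult}_C(D)=0$ because $(X,S+B)$ has zero defect, while along a $\pi$-exceptional prime $E$ one has $\operatorname{mult}_E(\pi^*L)\le m\,\operatorname{mult}_E(\pi^*A)$ (since $L=\lfloor mA\rfloor\le mA$) and $\operatorname{mult}_E(D_Y)\ge0$, so $\operatorname{mult}_E(G)\ge-\{m\,\operatorname{mult}_E(\pi^*A)\}>-1$. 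A local computation then gives $\pi_*\mathcal O_Y(L_Y)=\mathcal O_X(L)$ and $\pi_*\mathcal O_Y(M_{e,Y})=\mathcal O_X(M_e)$, and in particular $H^0(Y,\mathcal O_Y(L_Y))=H^0(X,\mathcal O_X(L))$ and $H^0(Y,\mathcal O_Y(M_{e,Y}))=H^0(X,\mathcal O_X(M_e))$.

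For each $e$ with $(1-p^e)(K_X+\Phi+\Lambda)$ Cartier, I would then compare the twisted trace maps $\operatorname{Tr}^e_{X,\Phi+\Lambda}\colon F^e_*\mathcal O_X(M_e)\to\mathcal O_X(L)$ and $\operatorname{Tr}^e_{Y,\Phi_Y+\Lambda_Y}\colon F^e_*\mathcal O_Y(M_{e,Y})\to\mathcal O_Y(L_Y)$ that compute $S^0$. Since Frobenius commutes with $\pi_*$, the previous step identifies $\pi_*F^e_*\mathcal O_Y(M_{e,Y})$ with $F^e_*\mathcal O_X(M_e)$, so $\pi_*(\operatorname{Tr}^e_{Y,\Phi_Y+\Lambda_Y})$ and $\operatorname{Tr}^e_{X,\Phi+\Lambda}$ are both maps $F^e_*\mathcal O_X(M_e)\to\mathcal O_X(L)$; a map into the reflexive sheaf $\mathcal O_X(L)$ is determined by its restriction to $U$, and over $\pi^{-1}(U)$ the pair $(Y,\Phi_Y+\Lambda_Y)$ — together with $M_{e,Y}\leftrightarrow M_e$ and $L_Y\leftrightarrow L$ — is identified with $(X,\Phi+\Lambda)$, so that the locally defined Frobenius traces agree there; hence the two maps coincide. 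Passing to $H^0$ gives $\operatorname{Im}\big(H^0(\operatorname{Tr}^e_{X,\Phi+\Lambda})\big)=\operatorname{Im}\big(H^0(\operatorname{Tr}^e_{Y,\Phi_Y+\Lambda_Y})\big)$ inside $H^0(X,\mathcal O_X(L))=H^0(Y,\mathcal O_Y(L_Y))$, and intersecting over $e$ (the admissible exponents for $X$ and for $Y$ share a cofinal set under divisibility, along which these images are nested) yields $S^0(Y,\Phi_Y+\Lambda_Y;L_Y)=S^0(X,\Phi+\Lambda;L)$, which is the assertion $C^0_{\Lambda_Y}(Y,S_Y+B_Y;L_Y)=C^0_{\Lambda}(X,S+B;L)$.

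The one genuinely delicate point is the round-down/round-up bookkeeping behind the claim about $G$: the zero-defect hypothesis on $X$ is used precisely to annihilate the non-exceptional part of $G$, and one must keep $D_Y$ around because $Y$ itself may fail to have zero defect. The compatibility of the trace maps with $\pi_*$ and the resulting sheaf identifications are then formal, reducing — as indicated — to the fact that over $U$ the two log pairs and their twisting divisors literally coincide.
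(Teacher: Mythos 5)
Your proof is correct and takes essentially the same route as the paper's: both reduce the statement, via the identity (\ref{eq:s0_integral}) and the fact that $K_Y+S_Y+B_Y+\Lambda_Y^m=\pi^*(K_X+S+B+\Lambda^m)$, to the two pushforward identities $\pi_*\mathcal{O}_Y(L_Y)=\mathcal{O}_X(L)$ and $\pi_*\mathcal{O}_Y((1-p^e)(K_Y+\Phi_Y+\Lambda_Y)+p^eL_Y)=\mathcal{O}_X((1-p^e)(K_X+\Phi+\Lambda)+p^eL)$, with the zero-defect hypothesis used exactly to kill the non-exceptional part of $L_Y-\pi^*L$. Your phrasing (showing $G=L_Y-\pi^*L$ is exceptional with coefficients $>-1$ and concluding by integrality, and spelling out the compatibility of the trace maps over the locus where $\pi$ is an isomorphism) merely makes explicit points the paper treats tersely, where it instead asserts $L_Y\geq\pi^*L+D_Y$ and leaves the trace compatibility implicit.
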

\noindent Note that $L_Y$ is rarely the pullback of $L$. This lemma holds for any $\Q$-divisor $\Lambda_Y$ satisfying the assumptions of Setting \ref{setting:s0_integral_adjunction} and such that $\Supp \Lambda_Y = \Supp \pi^*\Lambda$. 
\begin{proof}
Set $\Lambda^m_Y := \frac{1}{m+1}\Lambda_Y$. Since $(X,S+B)$ is plt, we have that $\lfloor B_Y \rfloor \leq 0$. The subspace $S^0(Y,\Phi_Y + \Lambda_Y; L_Y)$ is given as the image of 
\[
H^0(Y, F^e_* \OO_Y((1-p^e)(K_Y + \Phi_Y+\Lambda_Y) + p^eL_Y)) \to H^0(Y,\OO_Y(L_Y))
\]
for a sufficiently divisible $e>0$. Therefore, it is enough to show the following two identities: $\pi_* \OO_Y(L_Y) = \OO_X(L)$ and 
\begin{equation*}
\pi_* \OO_Y((1-p^e)(K_Y + \Phi_Y + \Lambda_Y) + p^eL_Y) = \OO_X((1-p^e)(K_X + \Phi+\Lambda) + p^eL).
\end{equation*}

We begin by checking the first one. Since $\pi _*L_Y=L$, there is an inclusion $\pi_* \OO_Y(L_Y) \subset \OO_X(L)$. Since $mA_Y + D_Y =\pi^*(mA) + D_Y$ where $D_Y$ is an effective Weil divisor, we have
\[L_Y=\lfloor mA_Y\rfloor +D_Y=\lfloor  \pi^*(mA) \rfloor + D_Y\geq    \pi^*(\lfloor mA \rfloor ) + D_Y =\pi ^* L+D_Y.\]
Here we used the fact that the defect $D=0$. Since $D_Y$ is effective and exceptional,
$\pi _* \OO _Y(D_Y)=\OO _X$. The inclusion $\pi _* \OO _Y(L_Y)\supset \OO _X(L)$ now follows from the projection formula.

We will now show the second one. To this end, we can use (\ref{eq:s0_integral}) to write
\[
(1-p^e)(K_Y + \Phi_Y+\Lambda_Y) + p^eL_Y= (1-p^e)(m+1)(K_Y+S_Y+B_Y+\Lambda^m_Y) + L_Y.
\]

Since $K_Y+S_Y+B_Y+\Lambda^m_Y = \pi^*(K_X+S+B+\Lambda^m)$ is Cartier up to multiplying by $p^e-1$ for a sufficiently divisible $e$, the second identity follows from the first one by the projection formula.
\end{proof}

The following lemma allows for lifting sections.
\begin{lemma} \label{lem:integral_s0_adjunction} With notation as in Setting \ref{setting:s0_integral_adjunction} suppose that $(X,S+B)$ is plt with standard coefficients, $S$ is an irreducible divisor, and $A := -(K_X+S+B)$ is ample. Assume that $\Supp \Lambda$ contains the non-snc locus of $(X,S+B)$ and write $A_{\tilde S} := -(K_{\tilde S} + B_{\tilde S}) = -(K_X+S+B)|_{\tilde S}$ for the normalisation $\tilde S$ of $S$.  Then by restricting sections we get a surjection
\[
C^0_{\Lambda}(X, S+B; \lfloor mA \rfloor) \to C^0_{\Lambda_{\tilde S}}(\tilde S, B_{\tilde S}; \lfloor mA_{\tilde S} \rfloor),
\]
where $\Lambda_{\tilde S} := \Lambda|_{\tilde S}$.
\end{lemma}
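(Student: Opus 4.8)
The plan is to restrict a suitably twisted Frobenius trace map from $X$ and to push the resulting sections through the normalisation $g\colon\tilde S\to S$, in the spirit of the $S^0$-restriction theorems of \cite{schwede14} and \cite{hx13}. First some normalisations. Since $B$ has standard coefficients the defect vanishes, so $D=0$, $L=\lfloor mA\rfloor$ and $\Phi=S+\{(m+1)B\}$; by Remark~\ref{rem:integral_s0_perturbations2} we may shrink the coefficients of $\Lambda$ (keeping $\Supp\Lambda$, which by hypothesis contains the non-snc locus of $(X,S+B)$) so that $A-\Lambda^m$ remains ample and $K_X+\Phi+\Lambda$ has Weil index $N$ prime to $p$. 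Then by (\ref{eq:s0_integral}) the divisor $H:=L-(K_X+\Phi+\Lambda)=(m+1)(A-\Lambda^m)$ is ample and $NH$ is Cartier. Moreover $(\tilde S,B_{\tilde S})$ is klt by inversion of adjunction (\cite[Corollary~1.5]{HW18}, cf.\ Lemma~\ref{lem:qdlt_inv_adjunction}), and $g$ is a universal homeomorphism by \cite[Theorem~1.2]{HW18}, hence purely inseparable, so for some $l\geq0$ the $l$-th iterate of Frobenius on $\tilde S$ factors through $g$; this is the device (used in \cite{HW18} to lift complements) that will move Frobenius-stable sections between $S$ and $\tilde S$. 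Recall also that by Definition~\ref{definition:c0} both $C^0$'s are honest $S^0$-spaces for the fixed small perturbations $\Lambda$, $\Lambda_{\tilde S}$, so no limit over perturbations intervenes.

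Now fix $e$ with $p^e\equiv1\pmod N$ and set $\mathcal M_e:=(1-p^e)(K_X+\Phi+\Lambda)+p^eL$, an integral Weil divisor. The associated Frobenius trace $T^e_X\colon F^e_*\OO_X(\mathcal M_e)\to\OO_X(L)$ has, for $e\gg0$, image on global sections equal to $C^0_\Lambda(X,S+B;L)$. Because $S$ is a reduced Weil divisor and reflexive rank-one sheaves on $X$ satisfy Serre's condition $S_3$ (\cite[Proposition~2.7]{HW17}), restriction to $S$ gives an exact sequence $0\to\OO_X(\mathcal M_e-S)\to\OO_X(\mathcal M_e)\to\mathcal E_e\to0$ with $\mathcal E_e$ reflexive of rank one on $S$. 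Pulling back along $g$ and using adjunction along the reduced divisor $S$ — together with the hypothesis that $\Supp\Lambda$ contains the non-snc locus of $(X,S+B)$, so that away from $\Supp\Lambda$ the different equals the naive restriction and the round-downs defining $\Phi$, $\Lambda$ and $L=\lfloor mA\rfloor$ commute with restriction (here the vanishing $D=0$ on $X$ is essential) — one identifies $g^*\mathcal E_e$ with $\OO_{\tilde S}\big((1-p^e)(K_{\tilde S}+\Phi_{\tilde S}+\Lambda_{\tilde S})+p^e\lfloor mA_{\tilde S}\rfloor\big)$, up to the inseparable twist coming from $g$, in such a way that $T^e_X$ restricts to the Frobenius trace computing $C^0_{\Lambda_{\tilde S}}(\tilde S,B_{\tilde S};\lfloor mA_{\tilde S}\rfloor)$.

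Granting this, the surjectivity we want follows from the usual diagram chase for $S^0$-restriction: in the commutative square with exact rows relating $T^e_X$ to its restriction, the left vertical arrow on global sections is surjective provided $H^1(X,\OO_X(\mathcal M_e-S))=0$, and then the image of $C^0_\Lambda(X,S+B;L)$ on $\tilde S$ is exactly the common image of the restricted traces, i.e.\ $C^0_{\Lambda_{\tilde S}}(\tilde S,B_{\tilde S};\lfloor mA_{\tilde S}\rfloor)$, once $e$ is large enough for the latter to stabilise. For the vanishing, note $\mathcal M_e-S=K_X+\{(m+1)B\}+\Lambda+p^eH$; since $NH$ is ample Cartier and $N\mid p^e-1$, we may split $p^eH=H+(p^e-1)H$ with $(p^e-1)H$ ample Cartier, so $\OO_X(\mathcal M_e-S)\cong\OO_X(N_0)\otimes\OO_X((p^e-1)H)$ for the fixed integral divisor $N_0:=-mK_X-(m+1)S-\lfloor(m+1)B\rfloor$, and the vanishing follows from Serre vanishing ($X$ being projective over the affine $Z$).

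The main obstacle is the identification in the second paragraph: checking that the restriction of the twisted sheaf $\OO_X(\mathcal M_e)$ to $S$, after accounting for the conductor/different and for the non-normality of $S$, is precisely the twisted sheaf on $\tilde S$ assembled from $\Phi_{\tilde S}$, $\Lambda_{\tilde S}$ and $\lfloor mA_{\tilde S}\rfloor$, and that $T^e_X$ restricts to the corresponding trace. This is a round-down computation that genuinely uses both zero defect on $X$ and the assumption $\Supp\Lambda\supseteq\{\text{non-snc locus}\}$; I would carry it out by localising at codimension-two points of $X$ and, where convenient, passing to a log resolution which is an isomorphism over the snc locus (allowed on the left-hand side by Lemma~\ref{lem:integral_s0_birational}), thereby reducing to the log-smooth case, where both the different and the compatibility of round-downs with restriction are transparent. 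The residual point — compatibility of the traces on $S$ and on $\tilde S$ under the universal homeomorphism $g$ — is handled exactly as in \cite{HW18}.
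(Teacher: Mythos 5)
Your overall shape (restrict a twisted trace map, get surjectivity from an $H^1$-vanishing by Serre vanishing) is reasonable, but the step you yourself flag as ``the main obstacle'' is not a routine verification — it is the entire content of the lemma, and as written it is a genuine gap. You need to identify the cokernel $\mathcal E_e$ of $\OO_X(\mathcal M_e-S)\to\OO_X(\mathcal M_e)$ on the possibly singular $X$, pushed through the non-normal $S$ and its purely inseparable normalisation $g\colon\tilde S\to S$, with the sheaf $\OO_{\tilde S}\bigl((1-p^e)(K_{\tilde S}+\Phi_{\tilde S}+\Lambda_{\tilde S})+p^e\lfloor mA_{\tilde S}\rfloor\bigr)$, \emph{and} check that $T^e_X$ restricts to the trace computing $C^0_{\Lambda_{\tilde S}}$. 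At codimension-two points of $X$ lying on $S$ where $X$ is singular, rounding does not commute with restriction: the different $B_{\tilde S}$ acquires fractional contributions governed by the local $\Q$-factorial index, so $\lfloor mA\rfloor|_{\tilde S}$ and $\lfloor mA_{\tilde S}\rfloor$ need not agree, and this discrepancy sits exactly inside $\Supp\Lambda$ (your remark that the round-downs commute with restriction \emph{away} from $\Supp\Lambda$ does not help, since an error supported in $\Supp\Lambda$ still changes the sheaf and the image of the trace). Likewise, ``handled exactly as in \cite{HW18}'' does not cover the compatibility of the two trace maps through the conductor of $S$ and the inseparable twist of $g$: \cite{HW18} gives normality of $S$ up to universal homeomorphism, not a restriction statement for $C^0$. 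So the proposal, as it stands, reduces the lemma to an unproved identification rather than proving it.

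The paper avoids this issue entirely, and your parenthetical suggestion of ``passing to a log resolution'' is in fact the whole proof, provided you commit to it on \emph{both} sides. One takes a log resolution $\pi\colon Y\to X$ which is an isomorphism over the snc locus, replaces $\Lambda$ by $\Lambda_Y=\pi^*\Lambda+\epsilon E$ with $E$ effective $\pi$-exceptional and $-E$ relatively ample so that $L_Y-(K_Y+\Phi_Y+\Lambda_Y)=-(m+1)(K_Y+S_Y+B_Y+\Lambda_Y^m)$ is ample, applies the standard $S^0$-adjunction surjectivity \cite[Proposition 2.3]{hx13} on the log smooth $Y$ — where $K_{S_Y}+\Phi_{S_Y}=(K_Y+\Phi_Y)|_{S_Y}$ and $L_Y|_{S_Y}=L_{S_Y}$ hold on the nose, so there is no different/round-down issue and no non-normality — and then descends \emph{both} sides by the birational invariance of $C^0$ (Lemma \ref{lem:integral_s0_birational}), applied to $(X,S+B)$ over $Y$ and to the klt pair $(\tilde S,B_{\tilde S})$ over $S_Y$; this is where the standard-coefficient (zero defect) hypothesis on both $X$ and $\tilde S$ and the assumption $\Supp\Lambda\supseteq\{\text{non-snc locus}\}$ (which guarantees the required support condition $\Supp\Lambda_{S_Y}=\Supp$ of the pullback of $\Lambda_{\tilde S}$) are actually used. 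If you replace your localisation-at-codimension-two plan by this two-sided application of Lemma \ref{lem:integral_s0_birational}, your argument becomes the paper's; without it, the key identification remains unjustified.
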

\begin{proof}
Let $\pi \colon Y \to X$ be a log resolution of $(X,S+B)$ which is an isomorphism over the simple normal crossings locus. We can write 
\begin{align*}
K_Y+S_Y+B_Y &= \pi^*(K_X+S+B), \text{ and } \\
K_{S_Y} + B_{S_Y} &= (K_Y+S_Y+B_Y)|_{S_Y}.
\end{align*}
for $S_Y := \pi^{-1}_*S$. Define $L_Y$, $L_{S_Y}$, $\Phi_Y$, $\Phi_{S_Y}$ as in Setting \ref{setting:s0_integral_adjunction}. 

Pick a $\pi$-exceptional effective anti-ample divisor $E$. Let 
\[
\Lambda_Y := \pi^*\Lambda + \epsilon E
\]
for $0 < \epsilon \ll 1$ such that $\Lambda_Y$ satisfies the assumptions of Setting \ref{setting:s0_integral_adjunction} and $\Supp \Lambda_Y = \Supp \pi^*\Lambda$. Set $\Lambda_{S_Y} := \Lambda_Y|_{S_Y}$. 

By the standard adjunction for $S^0$ (see e.g.\ \cite[Proposition 2.3]{hx13}),  since \[ L_Y-(K_Y+\Phi _Y+\Lambda _Y)=-(m+1)(K_Y+S_Y+B_Y+\Lambda _Y ^m)\] is ample, restricting sections induces a surjective map
\[
S^0(Y, \Phi_Y + \Lambda_Y; L_Y) \to S^0(S_Y, \Phi_{S_Y} + \Lambda_{S_Y}; L_{S_Y}).
\]
Indeed, $K_{S_Y} + \Phi_{S_Y} = (K_Y + \Phi_Y)|_{S_Y}$ and $L_Y|_{S_Y} = L_{S_Y}$ as $(Y,S_Y + B_Y)$ is log smooth. Thus,
$
C^0_{\Lambda _Y}(Y, S_Y + B_Y; L_Y) \to C^0_{\Lambda _{S_Y}}(S_Y, B_{S_Y}; L_{S_Y})
$
is surjective, and the claim follows from Lemma \ref{lem:integral_s0_birational} applied to both sides.
\end{proof}

Finally, we show that $C^0$ gets smaller when the boundary gets bigger. 
\begin{lemma} \label{lem:integral_s0_grows} Let $(X,S+B)$ and $(X,S'+B')$ be two sub-log pairs satisfying the assumptions of Setting \ref{setting:s0_integral_adjunction}. Suppose that  $S'+B' \geq S+B$ and define $L$ and $L'$ for $(X,S+B)$ and $(X,S'+B')$, respectively, as in Setting \ref{setting:s0_integral_adjunction}. 

Then $L-L' \geq 0$ and the inclusion $H^0(X,\OO _X(L')) \subseteq H^0(X,\OO _X(L))$ induces an inclusion
\[
C^0_{\Lambda'}(X,S'+B'; L') \subseteq C^0_{\Lambda}(X, S+B; L),
\]  
where $\Lambda$, $\Lambda'$ are as in Setting \ref{setting:s0_integral_adjunction} and $\Supp \Lambda \subseteq \Supp \Lambda' \cup (S'-S)$.
\end{lemma}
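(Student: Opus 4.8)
The plan is to deduce the inclusion from the monotonicity of $S^0$ under enlarging the twisting divisor, implemented through the functoriality of the Frobenius trace map; the only genuinely computational ingredient is a comparison of rounded divisors. First I would record the numerics. By the computation in Setting~\ref{setting:s0_integral_adjunction} one has $L=-m(K_X+S)-\lfloor (m+1)B\rfloor$ and, for the same choice of $K_X$, $L'=-m(K_X+S')-\lfloor (m+1)B'\rfloor$, so that
\[
L-L'=m(S'-S)+\lfloor (m+1)B'\rfloor-\lfloor (m+1)B\rfloor .
\]
I would check $L-L'\ge 0$ prime by prime: writing $s,s'\in\{0,1\}$ and $b,b'$ for the coefficients of $S,S',B,B'$ at a prime $P$, one combines $b,b'<1$ (from $\lfloor B\rfloor,\lfloor B'\rfloor\le 0$), the hypothesis $s+b\le s'+b'$, and the elementary identity $\bigl(\lfloor (m+1)b'\rfloor-\lfloor (m+1)b\rfloor\bigr)+\bigl(\{(m+1)b'\}-\{(m+1)b\}\bigr)=(m+1)(b'-b)$. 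The same bookkeeping shows that $\operatorname{coeff}_P(L-L')=0$ at every prime $P$ where $S+B$ and $S'+B'$ have equal coefficients. This already yields $\OO_X(L')\subseteq\OO_X(L)$ as reflexive sheaves, hence the inclusion $H^0(X,\OO_X(L'))\subseteq H^0(X,\OO_X(L))$.

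Next, using that $C^0_{\Lambda}(X,S+B;L)$ is unchanged if $\Lambda$ is replaced by a $\Q$-divisor with the same support and sufficiently small coefficients (as recorded after Definition~\ref{definition:c0}), I would keep $\Lambda'$ fixed and shrink $\Lambda$ so that its coefficients at primes of $\Supp\Lambda$ not contained in $\Supp(S'-S)$ — hence, by hypothesis, in $\Supp\Lambda'$ — are strictly below those of $\Lambda'$, and so that all coefficients of $\Lambda$ are below the finitely many positive numbers $(m+1)\bigl(1+\operatorname{coeff}_P(B')-\operatorname{coeff}_P(B)\bigr)$ attached to the primes $P$ at which the two boundaries differ. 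With this choice the key point is the divisor inequality
\[
(1-p^e)(K_X+\Phi'+\Lambda')+p^eL'\ \le\ (1-p^e)(K_X+\Phi+\Lambda)+p^eL
\]
for every sufficiently divisible $e$ (for which both sides are integral). Writing the difference as $(p^e-1)(\Phi'+\Lambda'-\Phi-\Lambda)+p^e(L-L')$ and substituting the formulas above together with the relation between the integral and fractional parts of $(m+1)b$ and $(m+1)b'$, the coefficient at each prime $P$ becomes an explicit affine function of $p^e$; at the primes where $S+B=S'+B'$ it is nonnegative because $\operatorname{coeff}_P(L-L')=0$ and $\operatorname{coeff}_P(\Lambda'-\Lambda)\ge 0$ there, and at the finitely many remaining primes it is nonnegative for $e\gg 0$ by the choice of $\Lambda$.

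Finally I would conclude by a diagram chase. The inequalities $L'\le L$ and the displayed one produce, for every sufficiently divisible $e$, a commutative square whose horizontal arrows are the Frobenius trace maps of $(X,\Phi'+\Lambda')$ and $(X,\Phi+\Lambda)$, namely $F^e_*\OO_X\bigl((1-p^e)(K_X+\Phi'+\Lambda')+p^eL'\bigr)\to\OO_X(L')$ and $F^e_*\OO_X\bigl((1-p^e)(K_X+\Phi+\Lambda)+p^eL\bigr)\to\OO_X(L)$, and whose vertical arrows are the natural inclusions of reflexive sheaves; commutativity is the standard compatibility of the Frobenius trace with multiplication by effective divisors (see \cite{schwede14,hx13,HW17}). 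Taking $H^0$ and intersecting the images of the top arrows over all such $e$, the chase shows that the image of $C^0_{\Lambda'}(X,S'+B';L')=S^0(X,\Phi'+\Lambda';L')$ under $H^0(\OO_X(L'))\hookrightarrow H^0(\OO_X(L))$ lies in $\bigcap_e\operatorname{Im}\bigl(H^0(\text{bottom arrow})\bigr)=S^0(X,\Phi+\Lambda;L)=C^0_{\Lambda}(X,S+B;L)$, as required.

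I expect the main obstacle to be the twisted divisor inequality of the second paragraph: it is sensitive to the interaction between the rounding operations defining $\Phi$ and $L$ and the auxiliary divisor $\Lambda$ (which is present only to make $K_X+\Phi+\Lambda$ of index prime to $p$), and it is precisely to retain enough room in this comparison — in particular at the primes of $\Supp(S'-S)$ and at those where $B$ and $B'$ differ — that $\Lambda$ must be taken with very small coefficients and with support inside $\Supp\Lambda'\cup\Supp(S'-S)$.
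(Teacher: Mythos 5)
Your proposal is correct and takes essentially the same route as the paper: you derive the same identity $L-L'=\Phi-\Phi'+(m+1)(S'+B'-S-B)\geq 0$, shrink $\Lambda$ so that in effect $\Lambda\leq \Lambda'+(m+1)(S'+B'-S-B)$, and your explicit trace-map square is just the unwound form of the paper's chain of inclusions $S^0(X,\Phi'+\Lambda';L')\subseteq S^0(X,\Phi'+\Lambda'+(L-L');L)=S^0(X,\Phi+\Lambda'+(m+1)(S'+B'-S-B);L)\subseteq S^0(X,\Phi+\Lambda;L)$, i.e.\ the standard monotonicity properties of $S^0$ whose proofs are precisely your diagram chase. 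As a small simplification, grouping the difference of the twisted divisors as $(L-L')+(p^e-1)\bigl(\Lambda'+(m+1)(S'+B'-S-B)-\Lambda\bigr)$ shows your key inequality holds for every admissible $e$ once $\Lambda\leq\Lambda'+(m+1)(S'+B'-S-B)$, so the ``$e\gg 0$'' affine-in-$p^e$ bookkeeping is unnecessary.
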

Note that it would be too restrictive to assume that $\Supp \Lambda \subseteq \Supp \Lambda'$. Indeed, $\Lambda'$ as in Setting \ref{setting:s0_integral_adjunction} has no common components with $S'$, while $\Lambda$ has no common components with $S$, but will often have common components with $S'-S$.  
\begin{proof}
Let $\Phi$ and $\Phi'$ be defined for $(X,S+B)$ and $(X,S'+B')$ as in Setting \ref{setting:s0_integral_adjunction}. By (\ref{eq:s0_integral}), we have
\begin{align*}
L - L' &= \Phi - \Phi' + (m+1)(S'+B'-S-B) \\
		&= S-S'+\lfloor (m+1)(S'+B') \rfloor - \lfloor (m+1)(S+B) \rfloor,
\end{align*}
and so $L - L' \geq 0$.

 We may assume that $\Lambda \leq \Lambda' + (m+1)(S'+B'-S-B)$. Then
\begin{align*}
S^0(X, \Phi'+\Lambda'; L') &\subseteq S^0(X, \Phi' + \Lambda' + (L - L'); L) \\
				  &= S^0(X, \Phi + \Lambda' + (m+1)(S'+B'-S-B); L) \\
				  &\subseteq S^0(X, \Phi+\Lambda; L). \qedhere
\end{align*}
\end{proof}

\subsection{Proof of Theorem \ref{thm:lift_complements}}
We are ready to show that $m$-complements of pl-flipping contractions exist for $m \in \{1,2,3,4,6\}$. With notation as in Setting \ref{setting:s0_integral_adjunction}, note that 
\[
\lfloor mA \rfloor = \lfloor -m(K_X + S+B) \rfloor = -m(K_X+S+B^*)
\]
for $B^* := \frac{1}{m} \lceil mB \rceil \geq B$. When $B$ has standard coefficients, then the defect is zero, $B^* = \frac{1}{m} \lfloor (m+1)B \rfloor$, and $L = \lfloor mA \rfloor = -m(K_X+S+B^*)$.

\begin{proof}[Proof of Theorem \ref{thm:lift_complements}]

We may assume that $Z$ is affine. 
Let $\tilde S$ be the normalisation of $S$. By Lemma \ref{lem:integral_s0_adjunction}, restricting sections gives a surjective map
\[
C^0_{\Lambda}(X, S+B; -m(K_X+S+B^*)) \to C^0_{\Lambda_{\tilde S}}(\tilde S, B_{\tilde S}; -m(K_{\tilde S}+B^*_{\tilde S})),
\] 
where $K_{\tilde S} + B_{\tilde S} = (K_X+S+B)|_{\tilde S}$, $B^*_{\tilde S} = \frac{1}{m}\lceil mB_{\tilde S} \rceil$, and $\Lambda$ is as in Setting \ref{setting:s0_integral_adjunction} with $\Supp \Lambda$ containing $\mathrm{Exc}(f)$ and the non-snc locus of $(X,S+B)$. Set $\Lambda_{\tilde S} := \Lambda|_{\tilde S}$, and note that it satisfies the assumptions of Setting \ref{setting:s0_integral_adjunction} for $(\tilde S, B_{\tilde S})$.

By Lemma \ref{lem:s0_complements_surfaces}, there exists $ \Gamma_{\tilde S} \in |{-}m(K_{\tilde S}+B^*_{\tilde S})| $ such that $(\tilde S, B^c_{\tilde S})$ is an $m$-complement of $(\tilde S, B_{\tilde S})$ for $B^c_{\tilde S} = B^*_{\tilde S} + \frac{1}{m}\Gamma_{\tilde S}$, and which moreover lifts to
\[
\Gamma \in |{-}m(K_X+S+B^*)|.
\]
Set $B^c = B^* + \frac{1}{m}\Gamma$. Then $m(K_X + S + B^c) \sim 0$ and $(K_X + S + B^c)|_{\tilde S} = K_{\tilde S} + B^c_{\tilde S}$. By inversion of adjunction (\cite[Corollary 1.5]{HW18}) applied to $(X,S + (1-\epsilon)B^c)$ for $0 < \epsilon \ll 1$, we get that $(X,S+B^c)$ is lc in a neighbourhood of ${\rm Exc}\, f$, and hence it is an $m$-complement of $(X,S+B)$.
\end{proof} 

In the above proof we used the following lemma.
\begin{lemma} \label{lem:s0_complements_surfaces} Let $(X,B)$ be a two dimensional klt pair with standard coefficients defined over a perfect field of characteristic $p>2$ and let $f \colon X \to Z$ be a projective birational map such that $-(K_X+B)$ is ample. Then there exists $m \in \{1,2,3,4,6\}$ and
\[
s \in C^0_{\Lambda}(X, B; -m(K_X+B^*)) \subseteq H^0(X, -m(K_X+B^*)),
\] 
such that $(X, B^* + \frac{1}{m}\Gamma)$ is an $m$-complement of $(X,B)$ in a neighbourhood of $\mathrm{Exc}(f)$ where $B^* := \frac{1}{m}\lceil mB \rceil$ and $\Gamma$ is the divisor corresponding to $s$. Here $\Lambda$ is as in Setting \ref{setting:s0_integral_adjunction}.
\end{lemma}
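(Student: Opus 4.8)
The plan is to reduce the statement to the existence of an $m$-complement of $(X,B)$ that is \emph{Frobenius-compatible} in the sense that the defining section lies in $C^0$, and to produce that complement by a careful analysis of the surface MMP from Lemma \ref{lemma:surface_mmp_lemma} together with a global $F$-regularity or $F$-splitting statement on the minimal model. First I would invoke Lemma \ref{lemma:surface_mmp_lemma} to get the curve $C$ and the birational morphisms $g\colon Y\to X$ and $h\colon Y\to W$ over $Z$, so that $(W,C_W+B_W)$ is plt with $-(K_W+C_W+B_W)$ ample over $Z$; by Proposition \ref{prop:hacon_xu_surfaces} and Remark \ref{rem:surfaces_char_5} the only obstruction to global $F$-regularity is the exceptional case $B_{C_W}=\tfrac12 P_1+\tfrac23 P_2+\tfrac45 P_3$. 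In the $F$-regular case, standard $S^0$-techniques (e.g. the argument of \cite[Theorem 3.1, Proposition 2.3]{hx13}) show that the restriction of $C^0$-sections to $C_W$ is surjective and that $C_W$ carries an $m$-complement with $m\le 4$ lifting to $W$, hence to $X$; here one uses Lemma \ref{lem:integral_s0_birational} to move the computation between $X$, $Y$ and $W$, and Lemma \ref{lem:integral_s0_grows} to control the perturbing divisor $\Lambda$.

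In the exceptional case one works with $m=6$: the curve $C_W\cong\mathbb P^1$ with different $\tfrac12 P_1+\tfrac23 P_2+\tfrac45 P_3$ has a unique $6$-complement, namely $\tfrac12 P_1+\tfrac23 P_2+\tfrac56 P_3$, and $(\mathbb P^1,\tfrac12 P_1+\tfrac23 P_2+\tfrac56 P_3)$ with $\mathrm{GCD}(p,6)=1$ is globally $F$-split; by \cite[Corollary 3.10]{SS} this upgrades to global $F$-regularity of $(C_W,B_{C_W})$. Either way, $(C_W,B_{C_W})$ is relatively globally $F$-regular over $Z$, so $(W,C_W+B_W)$ is relatively purely globally $F$-regular over $Z$ by $F$-adjunction (\cite[Lemma 2.10]{HW17}). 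Then I would run the $S^0$-adjunction of Lemma \ref{lem:integral_s0_adjunction}-type (its proof applies to $(W,C_W+B_W)\to(C_W,B_{C_W})$ since $C_W$ is snc in $W$) to lift a section $s_{C_W}\in C^0$ cutting out the complement on $C_W$ to a section $s_W\in C^0_{\Lambda_W}(W,C_W+B_W;-m(K_W+C_W+B_W^{*}))$, using purely global $F$-regularity to guarantee surjectivity of the restriction; the divisor $\Gamma_W=\{s_W=0\}$ gives a $6$-complement $(W,C_W+B_W^{c})$ of $(W,C_W+B_W)$.

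Next I would transfer this complement from $W$ back to $X$. Pull $\Gamma_W$ back to $Y$ via $h$, obtaining $\Gamma_Y$ and a sub-boundary $aC+B_Y^{c}$ with $6(K_Y+aC+B_Y^{c})\sim_Z 0$; by Lemma \ref{lem:integral_s0_birational}, applied to the plt zero-defect pair $(Y, C+B_Y)$ and the map $g$, we have $C^0_{\Lambda_Y}(Y,C+B_Y;L_Y)=C^0_{\Lambda}(X,B;L)$ after identifying $X$ with the relevant birational model (since $g$ either extracts $C$ or is the identity, and $(X,B)$ is klt so $\lfloor B_Y\rfloor\le 0$). Pushing forward to $X$ and using Lemma \ref{lem:integral_s0_grows} to absorb the change of boundary $C+B_Y\leadsto B$ and the corresponding change of $\Lambda$, the section $s_W$ descends to the desired $s\in C^0_{\Lambda}(X,B;-m(K_X+B^{*}))\subseteq H^0(X,-m(K_X+B^{*}))$. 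Finally, $(X,B^{*}+\tfrac1m\Gamma)$ is an $m$-complement of $(X,B)$ in a neighbourhood of $\mathrm{Exc}(f)$: $m(K_X+B^{*}+\tfrac1m\Gamma)\sim 0$ by construction, and log canonicity near $\mathrm{Exc}(f)$ follows from that of $(W,C_W+B_W^{c})$ by the negativity lemma applied to $g$ and $h$ (the discrepancies only go up under $g_*$, and $h$ is a $-(K_Y+C+B_Y)$-MMP so it preserves the relevant log canonical places).

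The main obstacle, I expect, is the bookkeeping of the perturbing divisors $\Lambda$ across the three models $X$, $Y$, $W$: Setting \ref{setting:s0_integral_adjunction} forces $\Lambda$ to have no common component with the relevant reduced divisor and to make $K+\Phi+\Lambda$ of index prime to $p$, and these constraints differ on $(X,B)$ (no $\lfloor\cdot\rfloor=1$ part), on $(Y,C+B_Y)$, and on $(W,C_W+B_W)$; one must repeatedly invoke Lemma \ref{lem:integral_s0_grows} and the Noetherian stabilisation remark after Definition \ref{definition:c0} to ensure the $C^0$-spaces match up and that the support conditions $\Supp\Lambda\subseteq\Supp\Lambda'\cup(S'-S)$ are met at each step. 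The other delicate point is verifying, in the exceptional $p=5$ case, that the unique $6$-complement on $C_W$ genuinely lifts — i.e. that the relevant $S^0$ on $C_W$ is nonzero and hits that complement — which is exactly where global $F$-splitting of $(\mathbb P^1,\tfrac12 P_1+\tfrac23 P_2+\tfrac56 P_3)$ and \cite[Corollary 3.10]{SS} are used.
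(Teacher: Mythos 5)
Your overall skeleton (run the surface MMP of Lemma \ref{lemma:surface_mmp_lemma}, compare $C^0$-spaces on $X$, $Y$, $W$ via Lemmas \ref{lem:integral_s0_birational} and \ref{lem:integral_s0_grows}, lift a complement from the curve $C_W$, and descend it crepantly back to $X$) matches the paper. However, the treatment of the exceptional case $B_{C_W}=\tfrac12P_1+\tfrac23P_2+\tfrac45P_3$ in characteristic $5$ — which is the only case that matters for the whole article — contains a genuine gap. You claim that $(\mathbb P^1,\tfrac12P_1+\tfrac23P_2+\tfrac56P_3)$ is globally F-split because $\gcd(p,6)=1$, and that \cite[Corollary 3.10]{SS} then upgrades $(C_W,B_{C_W})$ to globally F-regular. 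Both steps fail. The index being prime to $p$ does not give F-splitting: this log Calabi--Yau pair corresponds to the $j=0$ elliptic curve, which is supersingular in characteristic $5$ (concretely, F-splitting would require $p\nmid\binom{20}{8}$ for $e=2$, but $\binom{20}{8}\equiv 0 \bmod 5$ by Lucas), so the pair is \emph{not} globally F-split. (The paper's use of \cite[Lemma 2.9]{CTW} in Lemma \ref{lem:surface_char_5_with_special_complement} is different: there $B^c_C$ contains a point with coefficient one, so F-purity comes from F-adjunction at that point — a hypothesis you do not have here.) Moreover, the conclusion you are aiming for cannot be true: if $(C_W,\tfrac12P_1+\tfrac23P_2+\tfrac45P_3)$ were globally F-regular in characteristic $5$, then by F-adjunction $(X,B)$ would be relatively globally F-regular (Proposition \ref{prop:hacon_xu_surfaces}), contradicting Remark \ref{rem:surfaces_char_5}; indeed the flip would then exist by Theorem \ref{thm:hx_flips} and the entire lifting machinery would be unnecessary.

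You have also misplaced where Frobenius positivity is actually needed. The surjectivity of the restriction $C^0_{\Lambda_W}(W,C_W+B_W;\,\cdot\,)\to C^0_{\Lambda_C}(C,B_C;\,\cdot\,)$ in Lemma \ref{lem:integral_s0_adjunction} does not require pure F-regularity of $(W,C_W+B_W)$: it is the standard $S^0$-adjunction on a log resolution, using only that $L-(K+\Phi+\Lambda)=-(m+1)(K+S+B+\Lambda^m)$ is ample plus Serre vanishing. What one must show instead is that the target space on the curve is all of $H^0$, i.e.\ that the section cutting out the $m$-complement of $(C,B_C)$ is Frobenius stable with respect to the boundary $\Phi_C=\{(m+1)B_C\}$ — \emph{not} with respect to $B_C$ itself. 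This is the paper's key point (Lemma \ref{lem:gfr_p1_complements}): in the exceptional case with $m=6$ one has $\{7B_C\}=\tfrac12P_1+\tfrac23P_2+\tfrac35P_3$, which \emph{is} globally F-regular for $p>3$, so $C^0_{\Lambda_C}(C,B_C;-m(K_C+B_C^*))=H^0(C,-m(K_C+B_C^*))$ and every complement on $C$ lifts, with no F-regularity statement about $(C,B_C)$ or $(W,C_W+B_W)$ needed. Without this (or some replacement for your false F-splitting claim), your argument does not produce the section $s$ in the exceptional case. The remaining pieces of your proposal (the $C^0$-bookkeeping across models, and deducing log canonicity of the lifted complement near $\mathrm{Exc}(f)$ — which in the paper is done by inversion of adjunction along $C_W$ plus Koll\'ar--Shokurov connectedness rather than only the negativity lemma) are essentially in line with the paper.
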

\begin{proof}

By Lemma \ref{lemma:surface_mmp_lemma}, there exist an irreducible, exceptional over $Z$, curve $C$ on a blow-up of $X$ and projective birational maps $g \colon Y \to X$ and $h \colon Y \to W$ over $Z$ such that
\begin{enumerate}
	\item $g$ extracts $C$ or is the identity if $C \subseteq X$,
	\item $(Y, C+ B_Y)$ is plt,
	\item $(W, C_W + B_W)$ is plt and $-(K_{W}+C_W+B_W)$ is ample over $Z$,
	\item $B^+_Y - B_Y \geq 0$,
\end{enumerate} 
where $K_{Y}+bC + B_Y = g^*(K_X+B)$ for $C \not \subseteq \Supp B_Y$, $C_W := h_*C \neq 0$, $B_W := h_*B_Y$, and $K_Y + C + B^+_Y = h^*(K_W+C_W+B_W)$.

We have 
\begin{align*}
C_{\Lambda}^0(X, B; L) &= C^0_{\Lambda_Y}(Y, bC+B_Y; L_Y)\\
			 &\supseteq C^0_{\Lambda^+_Y}(Y, C + B^+_Y; L^+_Y) \\
			 &= C^0_{\Lambda_W}(W, C_W + B_W; L_W),
\end{align*}
where $L$, $L_Y$, $L^+_Y$, and $L_W$ are defined as in Setting \ref{setting:s0_integral_adjunction}  and the defects $D$ and $D_W$ vanish as $B$ and $C_W+B_W$ have standard coefficients.
 The first and the third equality hold by Lemma \ref{lem:integral_s0_birational}  and the middle inclusion holds by Lemma \ref{lem:integral_s0_grows}, since $C+B^+_Y\geq bC+B_Y$. Here, the perturbation divisors were chosen in the following way. First, we set $\Lambda_Y := g^*\Lambda$. Second we pick $\Lambda_W$ for $(W,C_W+B_W)$ as in Setting \ref{setting:s0_integral_adjunction}. By the construction in Remark \ref{rem:integral_s0_perturbations2}, we can assume that $\Lambda_W$ contains $g(\Supp (\Lambda_Y-\Lambda_Y \wedge C) \cup \mathrm{Exc}(h))$ and the non-snc locus of $(W,C_W+B_W)$ in its support. Last, set $\Lambda^+_Y := h^*\Lambda_W$.

Note that $L = -m(K_X+B^*)$ and $L_W = -m(K_W+ C_W + B^*_W)$ for $B^*_W = \frac{1}{m} \lceil mB_W \rceil$. By Lemma \ref{lem:integral_s0_adjunction}, restricting sections thus gives a surjective map
\[
C^0_{\Lambda_W}(W, C_W+B_W; -m(K_W + C_W + B^*_W)) \to C^0_{\Lambda_C}(C, B_C; -m(K_C+B_C^*)),
\]
where $C$ is identified with $C_W$ and $K_C + B_C = (K_W + C_W + B_W)|_C$. As usual $B_C^* := \frac{1}{m}\lceil mB_C \rceil$ and $\Lambda_C := \Lambda_W|_{C}$.\\

Let $m \in \{1,2,3,4,6\}$ be the minimal number such that $(C, B_C)$ admits an $m$-complement. By Lemma \ref{lem:gfr_p1_complements}, $(C,\{(m+1)B_C\})$ is globally F-regular, and so
\[
C^0_{\Lambda_C}(C, B_C; -m(K_C+B_C^*)) = H^0(C, -m(K_C + B^*_C)).
\]
In particular, there exists an lc $m$-complement $(C,B^c_C)$ of $(C,B_C)$ for some $m \in \{1,2,3,4,6\}$ (and hence of $(C,B^*_C)$ as $mB^*_C = \lceil mB_C \rceil$) which can be lifted to $W$. More precisely, there exists a non-zero section 
\[
s \in C^0_{\Lambda_W}(W, C_W+B_W; -m(K_W + C_W + B^*_W))
\]
with associated divisor $\Gamma$ such that $m(K_W + C_W + B^c_W) \sim 0$ and 
\[
(K_W + C_W + B^c_W)|_C = K_C + B^c_C,
\]
where $B^c_W := B^*_W + \frac{1}{m}\Gamma$. By inversion of adjunction, $(W,C_W+B^c_W)$ is log canonical along $C_W$. Note that 
\[
K_W + C_W + \epsilon B_W + (1-\epsilon)B^c_W
\]
is thus plt along $C_W$ and $\Q$-equivalent over $Z$ to $\epsilon(K_W+C_W+B_W)$, and hence by the Koll\'ar-Shokurov connectedness principle (cf.\ \cite[Theorem 5.2]{tanaka16_excellent}), it is plt for any $0 < \epsilon < 1$. Hence $(W,C_W+B^c_W)$ is lc, and thus an $m$-complement of $(W,C_W + B_W)$.

Let $K_Y + C + B^c_Y = h^*(K_W + C_W + B^c_W)$ and $B^c := g_*(C + B^c_Y)$. Then $(X,B^c)$ is an $m$-complement of $(X,B)$ which by the above inclusions of $C^0$ corresponds to a section in $C^0_{\Lambda}(X, B; -m(K_X+B^*))$.
\end{proof}

\begin{remark} \label{rem:lift_for_non_f_regular} With notation as in Theorem \ref{thm:lift_complements}, if $(X,S+B)$ is not purely relatively F-regular and $p=5$, then $m=6$. Indeed, under these assumptions, $(\tilde S, B_{\tilde S})$ is not relatively F-regular by F-adjunction, and hence, in the proof of Lemma \ref{lem:s0_complements_surfaces}, we have that  $B_C = \frac{1}{2}P_1 + \frac{2}{3}P_2 + \frac{4}{5}P_3$ for distinct points $P_1$, $P_2$, and $P_3$, by Remark \ref{rem:surfaces_char_5}. The smallest $m$ for which this $(C,B_C)$ admits an $m$-complement is $m=6$.
\end{remark}

\begin{lemma} \label{lem:gfr_p1_complements} Let $(\mathbb{P}^1, B)$ be a log pair with standard coefficients and $\deg B < 2$ defined over a perfect field of characteristic $p>2$. Let $m \in \{1,2,3,4,6\}$ be the minimal number such that $(\mathbb{P}^1, B)$ admits an $m$-complement. Then $(\mathbb{P}^1, \{(m+1)B\})$ is globally F-regular.
\end{lemma}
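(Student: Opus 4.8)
The plan is to reduce to a finite, explicit case analysis. Since $(\mathbb P^1,B)$ has standard coefficients and $\deg B<2$, the support of $B$ consists of finitely many points with coefficients of the form $1-\tfrac1n$. The pair admits an $m$-complement for some $m\in\{1,2,3,4,6\}$ precisely when $\sum(1-\tfrac1{n_i})\le 2$ with the ``small'' configurations — this is the classical fact that $-(K_{\mathbb P^1}+B)$ ample forces $B$ supported on at most three points, and the minimal $m$ is read off from the Brieskorn-type list: $B$ supported on $\le 1$ point gives $m=1$; on two points, $m\le 2$; and on three points $\tfrac1{n_1}+\tfrac1{n_2}+\tfrac1{n_3}>1$ the triple $(n_1,n_2,n_3)$ is (up to permutation) one of $(2,2,n)$ giving $m\mid 2$ (or $m\mid 2\cdot\mathrm{something}$), $(2,3,3)$ giving $m=3$ (resp.\ $m=6$ in borderline subcases), $(2,3,4)$ giving $m=4$, $(2,3,5)$ giving $m=6$. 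So after listing the possible $(n_1,n_2,n_3)$ together with the corresponding minimal $m$, one is left with finitely many explicit divisors $\Phi:=\{(m+1)B\}$ on $\mathbb P^1$, each supported on $\le 3$ points, and it remains to check global F-regularity of each.

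Next I would compute $\Phi$ explicitly in each case: writing $B=\sum(1-\tfrac1{n_i})P_i$, the coefficient of $P_i$ in $\{(m+1)B\}$ is $\{(m+1)(1-\tfrac1{n_i})\}=\{-(m+1)/n_i\}$. For the minimal-$m$ configurations this produces a short list; e.g.\ for $(2,3,5)$ with $m=6$ one gets $\{-7/2\}=\tfrac12$, $\{-7/3\}=\tfrac23$, $\{-7/5\}=\tfrac35$, i.e.\ $\Phi=\tfrac12P_1+\tfrac23P_2+\tfrac35P_3$; for $(2,3,4)$ with $m=4$ one gets $\tfrac12P_1+\tfrac23P_2+\tfrac14P_3$ (note $\{-5/4\}=\tfrac34$, so actually $\tfrac12+\tfrac23+\tfrac34$), and so on. In each case $\deg\Phi<2$ since each coefficient is a proper fraction and there are at most three of them summing to less than $2$ (this needs a small check — for $(2,3,5)$, $\tfrac12+\tfrac23+\tfrac35=\tfrac{53}{30}<2$; similarly for the others), so $(\mathbb P^1,\Phi)$ is again a log Fano pair.

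Then I would invoke the known criterion for global F-regularity of $(\mathbb P^1, a_1P_1+a_2P_2+a_3P_3)$: by \cite[Theorem 4.2]{watanabe91} such a pair is globally F-regular in characteristic $p>2$ unless it is one of the finitely many ``bad'' triples, the relevant one here being $(\tfrac12,\tfrac23,\tfrac45)$ in characteristic $5$ (cf.\ Remark \ref{rem:surfaces_char_5}), together with the two-point and one-point cases which are always globally F-regular. The key point is that none of the divisors $\Phi$ arising as $\{(m+1)B\}$ for the \emph{minimal} $m$ equals $(\tfrac12,\tfrac23,\tfrac45)$: indeed the pair with different $\tfrac12P_1+\tfrac23P_2+\tfrac45P_3$ has minimal complement $m=6$, and $\{7\cdot(\tfrac12,\tfrac23,\tfrac45)\}=(\tfrac12,\tfrac23,\tfrac35)$, which by \cite[Theorem 4.2]{watanabe91} (or Fedder's criterion, cf.\ the commented-out Lemma on $(\tfrac13,\tfrac34,\tfrac34)$) is globally F-regular for all $p>2$. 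One checks the remaining cases on the list in the same manner, using Fedder's criterion directly for the few $p=3,5$ borderline triples.

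The main obstacle is the bookkeeping: correctly enumerating, for each admissible standard-coefficient configuration $B$, the \emph{minimal} $m\in\{1,2,3,4,6\}$ and then the resulting $\Phi=\{(m+1)B\}$, and verifying in each case that $\Phi$ avoids the finitely many non-globally-F-regular triples of \cite[Theorem 4.2]{watanabe91} in characteristics $3$ and $5$. This is a finite check but one must be careful that minimality of $m$ is exactly what rules out the dangerous triple $(\tfrac12,\tfrac23,\tfrac45)$ appearing as $\Phi$ — which is the whole point, since for that differently one is forced up to $m=6$ and the $7$-fold fractional part lands on the F-regular triple $(\tfrac12,\tfrac23,\tfrac35)$.
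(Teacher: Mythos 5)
Your overall strategy coincides with the paper's: reduce to $B$ supported on at most three points, list the possible configurations $\bigl(\tfrac12,\tfrac12,1-\tfrac1n\bigr)$, $\bigl(\tfrac12,\tfrac23,\tfrac23\bigr)$, $\bigl(\tfrac12,\tfrac23,\tfrac34\bigr)$, $\bigl(\tfrac12,\tfrac23,\tfrac45\bigr)$ with minimal $m\in\{2,3,4,6\}$, compute $\{(m+1)B\}$, and check each resulting pair. But the execution has two genuine problems, both concentrated at $p=3$, which the hypothesis $p>2$ allows. First, the fractional parts in the $m=4$ case are wrong: for $B=\tfrac12P_1+\tfrac23P_2+\tfrac34P_3$ one has $\{5B\}=\tfrac12P_1+\tfrac13P_2+\tfrac34P_3$, not $\tfrac12P_1+\tfrac23P_2+\tfrac14P_3$ and not your corrected $\tfrac12P_1+\tfrac23P_2+\tfrac34P_3$. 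This is not cosmetic: $\bigl(\tfrac12,\tfrac23,\tfrac34\bigr)$ is the $E_7$ configuration, which is \emph{not} globally F-regular in characteristic $3$, so with your list the case check would break down; the correct triple $\bigl(\tfrac12,\tfrac13,\tfrac34\bigr)$ is handled in the paper by raising $\tfrac34$ to $1$ and applying F-adjunction.

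Second, your statement that Watanabe's Theorem 4.2 yields global F-regularity for all $p>2$ except the single triple $\bigl(\tfrac12,\tfrac23,\tfrac45\bigr)$ at $p=5$ is false: in characteristic $3$ the triples $\bigl(\tfrac12,\tfrac23,\tfrac23\bigr)$, $\bigl(\tfrac12,\tfrac23,\tfrac34\bigr)$, $\bigl(\tfrac12,\tfrac23,\tfrac45\bigr)$ all fail, and Watanabe's result does not directly apply to the non-standard-coefficient triples your construction produces, such as $\bigl(\tfrac12,\tfrac23,\tfrac35\bigr)$, $\bigl(\tfrac12,\tfrac13,\tfrac34\bigr)$, $\bigl(\tfrac12,\tfrac12,\tfrac{n-3}{n}\bigr)$. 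In particular, your key assertion that $\bigl(\tfrac12,\tfrac23,\tfrac35\bigr)$ is globally F-regular for all $p>2$ ``by Watanabe'' is exactly where the gap sits: for $p\geq 5$ one may increase $\tfrac35$ to $\tfrac34$ and quote Watanabe, but at $p=3$ the comparison triple is the non-F-regular $E_7$ one, so no coefficient-increasing argument through the standard list works; the paper closes precisely this case by an explicit Fedder-type computation at $p=3$ (take $e=3$, exponents $(13,18,16)$, and note $\binom{13}{9}$ is prime to $3$). Your fallback ``use Fedder for the few $p=3,5$ borderline triples'' is the right instinct, but as written the proposal asserts F-regularity in places the cited theorem does not cover; a complete proof must correct the fractional-part list and actually carry out the $p=3$ verification for $\bigl(\tfrac12,\tfrac23,\tfrac35\bigr)$ rather than deduce it from Watanabe.
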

\begin{proof}
If $B$ is supported at two or fewer points, then so is $\{(m+1)B\}$, and hence $(\mathbb{P}^1, \{(m+1)B\})$ is globally F-regular. Indeed, one can always increase one of the coefficients to one and apply global F-adjunction.

Thus, we can assume that $B = a_1P_1 + a_2P_2 + a_3P_3$ for distinct points $P_1$, $P_2$, $P_3$ and 
$
(a_1,a_2,a_3) \in \{(\frac{1}{2}, \frac{1}{2}, 1 - \frac{1}{n}), (\frac{1}{2}, \frac{2}{3}, \frac{2}{3}), (\frac{1}{2}, \frac{2}{3}, \frac{3}{4}), (\frac{1}{2}, \frac{2}{3}, \frac{4}{5}) \},
$
where $n \in \mathbb N$ is arbitrary. These are two, three, four, and six complimentary, respectively. 

Therefore $\{(m+1)B\} = b_1P_1 + b_2P_2 + b_3P_3$ for
\[ (b_1,b_2,b_3) \in \{ (\frac{1}{2}, \frac{1}{2}, \frac{1}{2}), (\frac{1}{2}, \frac{1}{2}, \frac{n-3}{n}), (0, \frac{2}{3}, \frac{2}{3}), (\frac{1}{2}, \frac{1}{3}, \frac{3}{4}), (\frac{1}{2}, \frac{2}{3}, \frac{3}{5}) \},\]
where $n\geq 3$.

To solve the first two cases, it is enough to show that $(\mathbb{P}^1, \frac{1}{2}P_1 + \frac{1}{2}P_2 + (1-\frac{1}{n})P_3)$ is globally F-regular which follows by \cite[Theorem 4.2]{watanabe91}. For the next two cases, we can argue as in the first paragraph: by increasing the biggest coefficient to one (obtaining $(0, \frac{2}{3}, 1)$, $(\frac{1}{2}, \frac{1}{3}, 1)$) and applying F-adjunction. When $p\geq 5$, the last case follows by increasing $\frac{3}{5}$ to $\frac{3}{4}$ and applying \cite[Theorem 4.2]{watanabe91} again.

We are left to show the last case for $p=3$. By Fedder's criterion, it is enough to check that $(x+y)^{c_1}x^{c_2}y^{c_3}$ contains a monomial $x^iy^j$ for some $i,j < p^e-1$ and $e>0$, where $c_r := \lceil (p^e-1)b_i \rceil$ and $r \in \{1,2,3\}$. Take $e=3$. Then, we have $(c_1,c_2,c_3) = (13, 18, 16)$ and
\[
(x+y)^{13}x^{18}y^{16} = \ldots + {13 \choose 9} x^{22}y^{25} + \ldots, 
\]
where $3$ does not divide ${13 \choose 9} = \frac{10 \cdot 11 \cdot 12 \cdot 13}{4!}$.
\end{proof}

\section{Flips admitting a qdlt complement}
The goal of this section is to show the existence of flips for flipping contractions admitting a qdlt $k$-complement, where $k\in \{1,2,3,4,6\}$.
\begin{proposition}[{cf.\ Proposition \ref{proposition:building_blocks}}]  \label{proposition:building_blocks_qdlt} Let $(X,\Delta)$ be a $\Q$-factorial qdlt three-dimensional pair with standard coefficients over a perfect field $k$ of characteristic $p>3$. Let $f \colon X\to Z$ be a $(K_X+\Delta)$-flipping contraction such that $\rho(X/Z)=1$ and let $\Sigma$ be a flipping curve. Assume that there exists a qdlt $6$-complement $(X,\Delta^c)$ of $(X,\Delta)$ such that $\Sigma \cdot S < 0$ for some irreducible divisor $S \subseteq \lfloor \Delta^c \rfloor$. Then the flip $f^+ \colon X^+\to Z$ exists.
\end{proposition}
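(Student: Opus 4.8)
# Proof Proposal for Proposition \ref{proposition:building_blocks_qdlt}

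The plan is to reduce to the three cases indicated in the introduction after Proposition \ref{proposition:building_blocks}, now phrased for a qdlt complement. First I would set up the adjunction data: let $\tilde S \to S$ be the normalisation and write $K_{\tilde S} + B_{\tilde S} = (K_X + S + B)|_{\tilde S}$ and $K_{\tilde S} + B^c_{\tilde S} = (K_X + \Delta^c)|_{\tilde S}$, where $\Delta = S + B$ (the hypothesis $\Sigma \cdot S < 0$ together with $\rho(X/Z) = 1$ forces $S$ to meet the flipping locus, and after shrinking $Z$ we may assume $-S$ is $f$-ample; since $(X,\Delta)$ is qdlt and $S \subseteq \lfloor \Delta^c \rfloor$, one checks $(X, S + B)$ is plt near $\Sigma$ after possibly perturbing, so that Theorem \ref{thm:hx_flips} is applicable once we establish relative global F-regularity of $(\tilde S, B_{\tilde S})$). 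Let $C = \mathrm{Exc}(f)$, which I will take irreducible for the sketch (the reducible case is handled componentwise, or by the argument below applied after a flop as in Lemma \ref{lemma:flopping_dlt}). The trichotomy is: (1) $(X,\Delta^c)$ is plt near $C$; (2) $C \cdot E < 0$ for some $E \subseteq \lfloor B^c \rfloor$ (where $B^c := \Delta^c - S$); (3) $C \cdot E \geq 0$ for some $E \subseteq \lfloor B^c\rfloor$ meeting $C$.

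In Case (1): since $(X,\Delta^c)$ is plt along $C$, adjunction gives that $(\tilde S, B^c_{\tilde S})$ is klt along the image of $C$, hence $(\tilde S, B^c_{\tilde S})$ restricted to a neighbourhood of $g(C) \subseteq f(S)$ is a klt $6$-complement of $(\tilde S, B_{\tilde S})$ relative to the contraction $\tilde S \to f(S)$, and $-(K_{\tilde S}+B_{\tilde S})$ is nef but (because $\Sigma\cdot S < 0$, so $\Sigma$ is $(K_X+S+B)|_{\tilde S}$-negative) not numerically trivial. Proposition \ref{proposition:no_klt_complements} then says $(\tilde S, B_{\tilde S})$ is relatively globally F-regular over $f(S)$. (Strictly, $B_{\tilde S}$ may not have standard coefficients, since we only assume $\Delta$ does; but $B_{\tilde S}$ has standard coefficients by adjunction on surfaces for standard-coefficient pairs, cf. the computation of the different, so this is fine — this is the one point to verify carefully.) By Theorem \ref{thm:hx_flips}, the flip exists.

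In Case (2): here $kS \sim lE$ near $C$ for suitable $k,l \in \N$ (both are $\Q$-linearly trivial multiples of a common $f$-exceptional divisor since $\rho(X/Z)=1$ and both are $f$-negative on $\Sigma$ with the same sign up to scaling — one must check the signs match, which holds because $S$ and $E$ are both supported on the non-klt locus lying over the flip and $C\cdot S, C\cdot E < 0$), so the pencil spanned by the corresponding sections defines a rational map, and the flip $X^+ \to Z$ is realised as the closure of the graph; termination/finite generation of the relevant section ring is immediate since we have an explicit pencil. In Case (3): perturb to assume $S$ and $E$ are the only log canonical centres of $(X,\Delta^c)$; applying F-adjunction to $S$ and then to $S \cap E$ (which is a curve since the pair is qdlt), together with the fact that $6(K_X+\Delta^c)\sim 0$ with $\gcd(p,6)=1$, shows $(X, S + B^c - \epsilon E)$ is relatively F-split over $Z$ for $0 < \epsilon \ll 1$; pushing a little further (as in Lemma \ref{lem:surface_char_5_with_special_complement}, applied to $\tilde S$) upgrades this to relative pure F-regularity of $(X, S+B)$, and Theorem \ref{thm:hx_flips} finishes.

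The main obstacle I expect is bookkeeping in Case (3), specifically extracting relative \emph{pure} F-regularity (not merely F-splitting) of $(\tilde S, B_{\tilde S})$ from the $6$-complement data when $E$ is present: one has to run the surface MMP of Lemma \ref{lemma:surface_mmp_lemma} on $\tilde S$, track how the non-exceptional curve $E|_{\tilde S}$ meets the extracted curve $C$, and invoke Lemma \ref{lem:surface_char_5_with_special_complement} in the case where the boundary on $C$ is exactly $\tfrac12 P_1 + \tfrac23 P_2 + \tfrac45 P_3$ — the delicate point being that the extra curve $E|_{\tilde S}$ supplies precisely the room needed to push the coefficient at $P_3$ up and recover F-regularity. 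A secondary technical point, flagged above, is verifying that the coefficients of $B_{\tilde S}$ (and of the various restricted boundaries) remain standard under adjunction, so that Propositions \ref{proposition:no_klt_complements} and \ref{prop:hacon_xu_surfaces} and Lemma \ref{lem:gfr_p1_complements} apply verbatim; where they do not, one falls back on Remark \ref{rem:surfaces_char_5} and Lemma \ref{lem:surface_char_5_with_special_complement}.
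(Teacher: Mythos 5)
Your proposal takes essentially the same route as the paper: the same perturbation making the pair plt with $S$ of coefficient one, the same trichotomy on the sign of the intersection with a divisor of $\lfloor \Delta^c\rfloor$, and the same key inputs — Proposition \ref{proposition:no_klt_complements} plus Theorem \ref{thm:hx_flips} in Case (1), the pencil spanned by $kS\sim lE$ in Case (2) (this is Proposition \ref{proposition:flip_case2}), and Lemma \ref{lem:surface_char_5_with_special_complement} applied on $\tilde S$ in Case (3) (this is Proposition \ref{proposition:flip_case3}, which in fact bypasses the intermediate F-splitting discussion and simply restricts the $6$-complement to $\tilde S$). The only step you assert without justification is that $E|_{\tilde S}$ is non-exceptional over $Z$ in Case (3) — exactly the hypothesis needed for Lemma \ref{lem:surface_char_5_with_special_complement} — which the paper verifies by a short negativity computation: if $E|_{\tilde S}$ were exceptional, then $0>(E|_{\tilde S})^2 = E\cdot\sum\lambda_i\Sigma_i\geq 0$, where one uses $\rho(X/Z)=1$ to propagate $E\cdot\Sigma\geq 0$ to all flipping curves $\Sigma_i$.
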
 
\begin{proof}
Write $\Delta = aS + D + B$, where $1\geq a \geq 0$, the divisor $D$ is integral, $S \not \subseteq \Supp(D+B)$, and $\lfloor B \rfloor = 0$. By replacing $\Delta$ by $S + (1-\frac{1}{k})D + B$ for $k \gg 0$, we can assume that $(X,\Delta)$ is plt. As explained in the introduction we split the proof into three cases.
\begin{enumerate}
\item $(X,\Delta^c)$ is plt along the flipping locus, or
\item $\Sigma \cdot E < 0$ for a divisor $E \subseteq \lfloor \Delta^c \rfloor$ different from $S$, or
\item $\Sigma \cdot E \geq 0$ for a divisor $E \subseteq \lfloor \Delta^c \rfloor$ intersecting the flipping locus.
\end{enumerate} 

 Case (1) and Case (3) follow from Proposition \ref{proposition:flip_case1} and Proposition \ref{proposition:flip_case3}, respectively, applied to $(X,\Delta)$. Case (2) follows from Proposition \ref{proposition:flip_case2} applied to $(X,\Delta +bE)$ where $b\geq 0$ is such that ${\rm mult}_{E}(\Delta + bE)=1$.
\end{proof}

\begin{proposition} \label{proposition:flip_case1} Let $(X,S+B)$ be a three-dimensional $\Q$-factorial plt pair over a perfect field $k$ of characteristic $p>3$ with $S$ irreducible and $B$ having standard coefficients. Let $f \colon X \to Z$ be a pl-flipping contraction such that $\rho(X/Z)=1$. Assume that there exists a plt $6$-complement $(X,S+B^c)$ of $(X,S+B)$ over $Z$. Then, the flip exists.
\end{proposition}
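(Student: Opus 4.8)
The plan is to reduce Proposition \ref{proposition:flip_case1} to the input theorem on the existence of flips, namely Theorem \ref{thm:hx_flips}, by verifying that $(X,S+B)$ is relatively purely F-regular over $Z$, equivalently that the normalisation $(\tilde S, B_{\tilde S})$ is relatively globally F-regular over $f(S)$. So the whole content of the proof is to establish this F-regularity statement using the plt $6$-complement $(X,S+B^c)$.

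First I would perform adjunction to $S$: write $K_{\tilde S}+B_{\tilde S}=(K_X+S+B)|_{\tilde S}$ and $K_{\tilde S}+B^c_{\tilde S}=(K_X+S+B^c)|_{\tilde S}$ on the normalisation $g\colon\tilde S\to S$ (which exists and is well-behaved since $(X,S+B)$ is plt, so $S$ is normal up to a universal homeomorphism by \cite[Theorem 1.2]{HW18}). The contraction $f$ restricts to a projective birational morphism $\tilde S\to \tilde T$ onto the normalisation of $f(S)$, with $-(K_{\tilde S}+B_{\tilde S})$ relatively nef (indeed relatively semi-ample: it is $\sim_{\Q}$ the restriction of the $f$-anti-ample $-S$-combination, and $\rho(X/Z)=1$ forces $-(K_X+S+B)|_{\tilde S}$ to be relatively nef). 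Since $(X,S+B^c)$ is a plt $6$-complement, adjunction gives that $(\tilde S,B^c_{\tilde S})$ is klt, $B^c_{\tilde S}\geq B_{\tilde S}$, and $6(K_{\tilde S}+B^c_{\tilde S})\sim_{\tilde T}0$. Hence $(\tilde S,B_{\tilde S})$ is a two-dimensional klt pair with standard coefficients, log Fano over $\tilde T$, admitting a \emph{klt} $6$-complement.

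Now comes the key step, which is also the main obstacle: I would invoke Proposition \ref{proposition:no_klt_complements}. That proposition says precisely that if a two-dimensional klt pair with standard coefficients, log Fano over a base via a birational contraction (with $-(K+B)$ relatively nef but not numerically trivial), is \emph{not} relatively globally F-regular, then every $6$-complement is non-klt. Contrapositively, since $(\tilde S,B_{\tilde S})$ does admit a klt $6$-complement $(\tilde S,B^c_{\tilde S})$, it must be relatively globally F-regular over $\tilde T$ — unless $-(K_{\tilde S}+B_{\tilde S})$ is numerically trivial over $\tilde T$. In that boundary case $f$ does not contract any curve inside $S$, so $f|_S$ is finite onto its image, and one argues directly: either $\Sigma\not\subseteq S$ contradicts the hypothesis $\Sigma\cdot S<0$ (a flipping curve on which $S$ is negative must be contained in $S$), or, if $S$ contains the flipping curve then $-(K_{\tilde S}+B_{\tilde S})$ cannot be relatively trivial, a contradiction; either way this degenerate case does not occur. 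So $(\tilde S,B_{\tilde S})$ is relatively globally F-regular over $f(S)\subseteq Z$.

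Finally, having established the relative global F-regularity of $(\tilde S,B_{\tilde S})$ over $f(S)$, I apply Theorem \ref{thm:hx_flips} directly to the pl-flipping contraction $f\colon X\to Z$ to conclude that the flip $f^+\colon X^+\to Z$ exists. (Equivalently one phrases this via F-adjunction as relative pure F-regularity of $(X,S+B)$ and cites \cite{hx13} as in Step (1) of the introduction.) The one point requiring care, and the place I expect the real work to be, is confirming the hypotheses of Proposition \ref{proposition:no_klt_complements} are met — in particular checking that $-(K_{\tilde S}+B_{\tilde S})$ is relatively nef and handling the numerically-trivial exclusion — and making sure that "plt $6$-complement of $(X,S+B)$" restricts under adjunction to a genuine klt $6$-complement of $(\tilde S,B_{\tilde S})$ (i.e. that the different inequality $B^c_{\tilde S}\geq B^*_{\tilde S}$ and the linear equivalence $6(K_{\tilde S}+B^c_{\tilde S})\sim 0$ both survive the restriction); this is where \cite[Corollary 1.5]{HW18} and standard adjunction for complements are needed.
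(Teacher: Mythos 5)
Your proposal is correct and follows essentially the same route as the paper: restrict to the normalisation $\tilde S$, observe that the plt $6$-complement induces a klt $6$-complement $(\tilde S,B^c_{\tilde S})$, invoke Proposition \ref{proposition:no_klt_complements} (in contrapositive form) to get relative global F-regularity of $(\tilde S,B_{\tilde S})$, and conclude by Theorem \ref{thm:hx_flips}. The only place where you are more laboured than necessary is the not-numerically-trivial hypothesis: since $-S$ is $f$-ample the flipping curves lie in $S$, and since $-(K_X+S+B)$ is $f$-ample its restriction $-(K_{\tilde S}+B_{\tilde S})$ is relatively ample (not merely nef) over $f(S)$, so the degenerate case you discuss cannot arise.
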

\begin{proof}
Write $K_{\tilde S} + B_{\tilde S} = (K_X+S+B)|_{\tilde S}$ and $K_{\tilde S} + B^c_{\tilde S} = (K_X+S+B^c)|_{\tilde S}$ for the normalisation $\tilde S$ of $S$. The pair $(\tilde S, B^c_{\tilde S})$ is a klt $6$-complement, and so $(\tilde S, B_{\tilde S})$ {is relatively F-regular} by Proposition \ref{proposition:no_klt_complements}. In particular, the flip exists by  Theorem \ref{thm:hx_flips}. 
\end{proof}

The following proposition addresses Case (2).
\begin{proposition} \label{proposition:flip_case2} Let $(X,\Delta)$ be a three-dimensional $\Q$-factorial qdlt pair  over a perfect field $k$ of characteristic $p>0$, let $f \colon X \to Z$ be a flipping contraction such that $\rho(X/Z)=1$, and let $\Sigma$ be a flipping curve. Assume that there exist distinct irreducible divisors $S, E \subseteq \lfloor \Delta \rfloor$ such that $S \cdot \Sigma < 0$ and $E \cdot \Sigma < 0$. Then the flip of $\Sigma$ exists.
\end{proposition}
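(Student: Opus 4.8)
The plan is to construct the flip explicitly as the ample model of a divisor supported on $S$ and $E$, using the fact that $S \cdot \Sigma < 0$ and $E \cdot \Sigma < 0$ and $\rho(X/Z) = 1$. Since both $S$ and $E$ are $f$-anti-ample on the single extremal ray $\R[\Sigma]$, and $\rho(X/Z)=1$, the divisors $S$ and $E$ are numerically proportional over $Z$: there exist $k, l \in \N$ such that $kS \equiv_Z lE$, and since $K_X+\Delta$ is $\Q$-Cartier and $f$ is a flipping contraction one can further arrange $kS \sim_{\Q,Z} lE$ (possibly after passing to a neighbourhood of $f(\Sigma)$, so that relative $\Q$-linear equivalence follows from numerical equivalence as $Z$ is affine and the obstruction lies in a torsion-free group, or by absorbing the discrepancy into a principal divisor). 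Then $D := kS$ satisfies $D|_S \sim_{\Q} (kS)|_S$ and the key point is that the pencil (or just the two-dimensional space of sections) spanned by the section cutting out $kS$ and the section cutting out $lE$ defines a rational map $\phi \colon X \dashrightarrow \p^1_Z$ whose image, suitably normalised, produces $X^+$.

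First I would reduce to $Z$ affine and work in a neighbourhood of $\mathrm{Exc}(f)$, so that $kS$ and $lE$ are both effective Weil divisors with $kS \sim lE$; write $R := \bigoplus_{m \geq 0} f_*\OO_X(mkS)$, the section ring of $S$ over $Z$. The finite generation of $R$ is what gives the flip: if $R$ is a finitely generated $\OO_Z$-algebra then $X^+ := \Proj_Z R$ together with the natural birational contraction $f^+ \colon X^+ \to Z$ is small (since $kS$ is $f$-anti-ample, $S$ does not meet the flipped locus in the "wrong" way), $\rho(X^+/Z) = 1$, and $K_{X^+}+\Delta^+$ is $f^+$-ample by the negativity lemma applied to the common resolution — indeed $-S^+$ is $f^+$-ample and $-S$ is $f$-ample forces $K_{X^+}+\Delta^+$ to be $f^+$-ample once one checks the discrepancy comparison. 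So the real content is: (i) $kS \sim lE$ for suitable $k,l$, and (ii) $R$ is finitely generated.

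For (ii), the trick is that $S \cap E$ is nonempty and contains $\Sigma$ (both divisors meet the flipping curve), so the restriction $kS|_{\tilde S}$ to the normalisation $\tilde S$ of $S$ is again anti-ample-ish over $Z$ in the relevant sense, and we can run a surface argument. More precisely, the relation $kS \sim lE$ restricted to $\tilde S$ expresses $(kS)|_{\tilde S}$ as $l E|_{\tilde S}$, an effective divisor whose support is a curve ($\subseteq S \cap E$) contracted by $\tilde S \to f(S)$; hence $f_*\OO_X(mkS)$ is built from sections on $\tilde S$ supported on this contracted curve, and finite generation of the section ring on the surface $\tilde S$ (where the MMP and finite generation are known, e.g.\ by \cite{tanaka12}, or directly since we're looking at a divisor supported on a contracted curve) propagates upward. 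Concretely, I would use the exact sequence $0 \to \OO_X((m-1)kS) \to \OO_X(mkS) \to \OO_{\tilde S}(mkS|_{\tilde S}) \to \cdots$ (up to the universal-homeomorphism subtlety for $S$ versus $\tilde S$, handled as in \cite{HW18}) and induct on $m$, reducing generation in degree $m$ to the surface.

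The main obstacle I expect is step (i) — promoting the numerical proportionality $kS \equiv_Z lE$ to actual $\Q$-linear equivalence $kS \sim_{\Q,Z} lE$ — together with the bookkeeping needed when $X$ is only $\Q$-factorial (so $S$, $E$ are $\Q$-Cartier, not Cartier) and $S$ is non-normal. The linear equivalence should follow because over the affine base $Z$, after shrinking, $\mathrm{Pic}^0$ and torsion obstructions vanish, so numerically trivial-over-$Z$ $\Q$-Cartier divisors are $\Q$-linearly trivial over $Z$; but one must be careful that $kS - lE$, a priori only $\Q$-Cartier, is genuinely $\Q$-principal. An alternative route that sidesteps (i): directly show $\bigoplus_m f_*\OO_X(\lfloor mkS\rfloor)$ is finitely generated by the surface-restriction induction above without ever invoking the relation to $E$, using only that $S$ is $f$-anti-ample and $S|_{\tilde S}$ has support inside the exceptional locus; then the existence of $E$ with $E \cdot \Sigma < 0$ is used only to guarantee $S \cap E \supseteq \Sigma$ so that $S|_{\tilde S}$ really is exceptional-supported and the surface section ring is visibly finitely generated. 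I would write the proof along this alternative line, flagging the linear-equivalence approach as the motivation.
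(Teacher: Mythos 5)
The first half of your proposal is, in substance, the paper's proof: after shrinking $Z$ to a small affine neighbourhood of $Q=f(\Sigma)$ one chooses $k,l$ with $kS\sim_Z lE$ Cartier (the upgrade from numerical proportionality, which you flag as the main obstacle, is handled exactly as you suggest, by shrinking the affine base), one maps $X$ to $\mathbb{P}^1_Z$ by the pencil spanned by these two divisors, and one takes the normalisation $X^+$ of the closure of the image. The point you do not make, and which the paper needs, is that the qdlt hypothesis forces $S\cap E=\mathrm{Exc}(f)$ near the flipping locus (connected components of $S\cap E$ are irreducible curves, and since $\rho(X/Z)=1$ no non-exceptional component can meet $\mathrm{Exc}(f)$); hence the base locus of the pencil is exactly $\mathrm{Exc}(f)$, $X^+\to Z$ is small and an isomorphism over $Z\setminus\{Q\}$, the strict transform of $kS$ is the restriction of a section of $\mathbb{P}^1_Z$ and so is $\Q$-Cartier and relatively ample, and $\bigoplus_m H^0(X,\OO_X(mS))=\bigoplus_m H^0(X^+,\OO_{X^+}(m\pi^*S'))$ is visibly finitely generated; the flip then exists by \cite[Corollary 6.4]{km98} since $K_X+\Delta\sim_{\Q,Z}aS$ with $a>0$. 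No vanishing theorem and no negativity-lemma ampleness argument is needed.

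The genuine gap is in the route you say you would actually write. First, it is self-defeating: $S|_{\tilde S}$ is only a divisor class, and the effective representative of $(kS)|_{\tilde S}$ supported on a contracted curve comes precisely from the relation $kS\sim lE$ together with $S\cap E=\mathrm{Exc}(f)$; knowing merely that $\Sigma\subseteq S\cap E$ gives nothing of the sort, and you never invoke the qdlt hypothesis that makes $S\cap E$ exceptional. Second, even granting such a representative, ``finite generation propagates upward'' is not formal: what must be finitely generated is the restricted algebra, i.e.\ the images of the maps $H^0(X,\OO_X(mkS))\to H^0(\tilde S,\cdot)$ (finite generation of the full surface algebra does not control a subalgebra of images), and identifying these images requires surjectivity, hence an $H^1$-vanishing for the kernels $\OO_X(mkS-S)$ — in characteristic $p>0$ this is exactly the hard point. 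A restricted-algebra argument of this kind can be pushed through, but only with substantially more input (pure F-regularity of $(X,S+B)$ along the flipping locus via F-adjunction through the curve $S\cap E$, exactness of the Weil-divisorial restriction sequences, and surface Kawamata–Viehweg vanishing), none of which you supply; also your displayed sequence with kernel $\OO_X((m-1)kS)$ is not the ideal-sheaf sequence of $S$. The pencil construction is designed precisely to avoid all of this, so you should write the proof along your ``motivational'' line, not the alternative one.
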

\begin{proof}
We may assume that $Z$ is a sufficiently small affine neighbourhood of $Q := f(\Sigma)$. Let $k,l \in \N$ be such that $kS \sim _Z lE$ are Cartier and consider a pencil $h \colon X \dashrightarrow \mathbb{P}^1_Z$ given by the linear system in $|kS|$ induced by these two divisors. We set $X'$ to be the closure of the image of $X$ under $h$.

Since $(X,S+E)$ is qdlt and $\mathrm{Exc}(f) \subseteq S \cap E$, we get that $S \cap E = \mathrm{Exc}(f)$. Thus the induced map $g \colon X' \to Z$ is an isomorphism over $Z\, \backslash\, Q$, and $g$ is a small birational morphism. If $S'$ is the strict transform of $S$, then $kS'$ is the restriction of a section of $\mathbb{P}^1_Z$, and so $S'$ is $\Q$-Cartier and relatively ample. Let $\pi \colon X^+ \to X'$ be the normalisation of $X'$. Then, $X\dasharrow X^+$ is a small birational morphism of normal varieties, and we have that
\[
\bigoplus_{m \in \mathbb{Z}_{\geq 0}} H^0(X, mS) = \bigoplus_{m \in \mathbb{Z}_{\geq 0}} H^0(X^+, m\pi^*S')
\] 
is finitely generated. Since $K_X+\Delta \sim_{\Q} aS$ for $a \in \Q_{>0}$, the flip of $X$ exists by \cite[Corollary 6.4]{km98}. 
\end{proof}

Now, we deal with Case (3). Note that we will apply this proposition later in the case when $B$ does not have standard coefficients.
\begin{proposition} \label{proposition:flip_case3} 
Let $(X,S+B)$ be a three-dimensional $\Q$-factorial qdlt pair over a perfect field $k$ of characteristic $p>3$ with $S$ irreducible, let $f \colon X \to Z$ be a flipping contraction such that $\rho(X/Z)=1$, $-(K_X+S+B)$ is relatively ample, and $-S$ is relatively ample. Let $\Sigma$ be a flipping curve. Assume that there exists a $6$-complement $(X,S+E+B^c)$ of $(X,S+B)$ such that $E$ is irreducible, $E \cdot \Sigma \geq 0$, and $E \cap \Sigma \neq \emptyset$. Then, the flip of $\Sigma$ exists.

\end{proposition}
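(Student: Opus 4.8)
The plan is to reduce, by means of Theorem \ref{thm:hx_flips}, to the assertion that $(\tilde S, B_{\tilde S})$ is relatively globally F-regular over $T := f(S)$, where $\tilde S$ is the normalisation of $S$ and $K_{\tilde S} + B_{\tilde S} = (K_X+S+B)|_{\tilde S}$. First I would reduce to the case that $(X,S+B)$ is plt: replacing $B$ by $B-\eta\lfloor B\rfloor$ for $0<\eta\ll 1$ does not change the flipping contraction $f$ (as $\rho(X/Z)=1$), keeps $-(K_X+S+B)$ and $-S$ relatively ample, and keeps $(X,S+E+B^c)$ a $6$-complement, so it suffices to flip the perturbed pair. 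Then $(\tilde S, B_{\tilde S})$ is a two-dimensional klt pair with $-(K_{\tilde S}+B_{\tilde S})$ relatively ample over $T$ (hence relatively nef and not numerically trivial), so the birational morphism $\tilde S \to T$ is of the kind to which Lemma \ref{lemma:surface_mmp_lemma}, Lemma \ref{lem:surface_char_5_with_special_complement} and Proposition \ref{proposition:no_klt_complements} apply.

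Next I would restrict the $6$-complement to $\tilde S$. Writing $K_{\tilde S} + C_{\tilde S} + B^c_{\tilde S} = (K_X+S+E+B^c)|_{\tilde S}$, the divisor $C_{\tilde S} := (S\cap E)|_{\tilde S}$ occurs with coefficient $1$ (it is $\geq 1$ by the adjunction inequality and $\leq 1$ by log canonicity of the complement), and $(\tilde S, C_{\tilde S}+B^c_{\tilde S})$ is a $6$-complement of $(\tilde S, B_{\tilde S})$. The key step is to produce an irreducible component $\Gamma$ of $C_{\tilde S}$ that is non-exceptional over $T$ and meets the exceptional locus of $\tilde S \to T$ (i.e.\ the preimage of the flipping locus). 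Granting this, Lemma \ref{lem:surface_char_5_with_special_complement} yields that $(\tilde S, B_{\tilde S})$ is relatively globally F-regular over $T$, and Theorem \ref{thm:hx_flips} then produces the flip. Equivalently, one may argue by contradiction: if $(\tilde S, B_{\tilde S})$ were not relatively F-regular, then by Proposition \ref{proposition:no_klt_complements} its $6$-complement would have a unique non-klt valuation, exceptional over $T$, contradicting the non-exceptional non-klt divisor $\Gamma$.

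To find $\Gamma$: since $f$ is small, $E$ is not $f$-exceptional, so a component of $S\cap E$ is exceptional over $T$ exactly when it is contained in $\mathrm{Exc}(f)$. As $S\cdot\Sigma<0$ forces $\Sigma\subseteq S$, the hypothesis $E\cap\Sigma\neq\emptyset$ gives a point $q\in S\cap E\cap\Sigma$; the component $C_1$ of $S\cap E$ through $q$ then meets $\mathrm{Exc}(f)$, and its strict transform $\Gamma$ in $\tilde S$ does the job, provided $C_1\not\subseteq\mathrm{Exc}(f)$. Thus the only problematic configuration is the degenerate one in which $S\cap E$ is, near the flipping locus, contained in $\mathrm{Exc}(f)$; this forces $\Sigma\subseteq E$ and $S\cap E=\mathrm{Exc}(f)$ there.

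I expect this degenerate case to be where essentially all of the difficulty lies. In it, $\rho(X/Z)=1$ together with $S\cdot\Sigma<0$ and $E\cdot\Sigma\geq 0$ forces $E$ to be numerically trivial over $Z$ (otherwise, writing $E\equiv\mu S$ over $Z$, one would get $\mu<0$, hence $-E$ relatively ample and $E\cdot\Sigma<0$, a contradiction); in particular $E=f^*(f_*E)$ and $K_X+S+B^c$ is numerically trivial over $Z$. I would then handle this case directly, in the spirit of Proposition \ref{proposition:flip_case2}: exhibit a finitely generated section ring attached to $S$ and $E$ from which the flip is obtained, or show that this configuration is incompatible with the qdlt-ness of the complement and so does not occur. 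Once the degenerate case is disposed of, the remaining cases are closed by the surface-level input of Lemma \ref{lem:surface_char_5_with_special_complement} (equivalently Proposition \ref{proposition:no_klt_complements}) as above.
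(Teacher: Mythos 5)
Your first two steps coincide with the paper's proof: perturb $\lfloor B\rfloor$ to make $(X,S+B)$ plt, restrict the $6$-complement to the normalisation $\tilde S$, and then try to feed a coefficient-one, non-exceptional curve meeting the exceptional locus into Lemma \ref{lem:surface_char_5_with_special_complement} (equivalently Proposition \ref{proposition:no_klt_complements}) so that Theorem \ref{thm:hx_flips} produces the flip. The genuine gap is that you never dispose of your ``degenerate case'', and that case is exactly what the paper's proof eliminates by a short numerical argument you are missing. Since $\rho(X/Z)=1$ and $E\cdot\Sigma\geq 0$, one has $E\cdot\Sigma_i\geq 0$ for \emph{every} flipping curve $\Sigma_i$. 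Now suppose, as in your degenerate configuration, that no non-exceptional component of $E|_{\tilde S}$ meets the exceptional locus of $\tilde S\to f(S)$ over $f(\Sigma)$; then (after shrinking $Z$ around $f(\Sigma)$, or equivalently intersecting with the exceptional part, which is disjoint from the remaining components) $E|_{\tilde S}$ is a nonzero effective exceptional divisor, so $(E|_{\tilde S})^2<0$ by negativity of the intersection form, while the projection formula gives $(E|_{\tilde S})^2=E\cdot(E\cap S)=E\cdot\sum\lambda_i\Sigma_i\geq 0$ with $\lambda_i>0$ --- a contradiction. Hence the degenerate case never occurs, the curve $\Gamma$ you want always exists, and no auxiliary construction of the flip is needed.

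Your two fallback suggestions would not close the gap. An argument ``in the spirit of Proposition \ref{proposition:flip_case2}'' is not available, since that construction requires two divisors both \emph{negative} on the flipping curve, whereas here $E\cdot\Sigma\geq 0$, and you give no actual finite-generation argument. Excluding the configuration ``by qdlt-ness of the complement'' is also not an option: the complement $(X,S+E+B^c)$ in this proposition is only assumed log canonical, and the paper applies the proposition with complements that are not known to be qdlt (e.g. in Proposition \ref{prop:single_extraction} and in the proof of Theorem \ref{theorem:almost_flips}). Finally, your parenthetical claim that in the degenerate case $E$ is numerically trivial over $Z$ is not justified as written: writing $E\equiv\mu S$ over $Z$, your argument rules out $\mu>0$ but not $\mu<0$ (in which case $E$ is relatively ample and $E\cdot\Sigma>0$, consistent with the hypotheses); this is moot once the degenerate case is ruled out as above, but as stated it is another hole in the proposed treatment of that case.
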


\begin{proof}
Let $\tilde S$ be the normalisation of $S$. By perturbing the coefficients of $\lfloor B\rfloor $, we may assume that $(X,S+B)$ is plt. The pair $(\tilde S, B_{\tilde S})$ admits a $6$-complement $(\tilde S, E|_{\tilde S} + B^c_{\tilde S})$, where $K_{\tilde S} + B_{\tilde S} = (K_X+S+B)|_{\tilde S}$ and $K_{\tilde S} + E|_{\tilde S} + B^c_{\tilde S} = (K_X+S+E+B^c)|_{\tilde S}$.  

 We claim that $E|_{\tilde S}$ is not exceptional over $Z$. Indeed, otherwise
 \[
 	0 > (E|_{\tilde S})^2 =  E \cdot (E\cap S) =  E \cdot \sum \lambda _i \Sigma _i \geq 0, 
 \]
 for some flipping curves $\Sigma _i$ and some $\lambda_i > 0$, which is a contradiction. We have used the fact that as $\rho (X/Z)=1$, if $E\cdot \Sigma \geq 0$, then $E\cdot \Sigma _i\geq 0$ for every flipping curve $\Sigma _i$.

By Lemma \ref{lem:surface_char_5_with_special_complement}, the pair $(\tilde S, B_{\tilde S})$ is relatively F-regular over a neighbourhood of $f(\Sigma)$ in $f(S)$, and so the flip exists by Theorem \ref{thm:hx_flips}.

\end{proof}

\section{Divisorial extractions}

In \cite{HW18}, we have shown that dlt modifications exist. In our proof of the existence of flips, it is important to construct minimal qdlt modifications of flipping contractions. To this end, we need to extract a single divisorial place, and the following proposition shows that this can be done for $6$-complements.
\begin{proposition} \label{prop:single_extraction} Let $(X,\Delta)$ be a $\mathbb Q$-factorial three-dimensional lc pair defined over a perfect field of characteristic $p>3$ such that  $X$ is klt and $6(K_X + \Delta) \sim 0$. Let $E$ be a non-klt valuation of $(X,\Delta)$ over $X$. Then there exists a projective birational morphism $g \colon Y \to X$ such that $E$ is its exceptional locus.
\end{proposition}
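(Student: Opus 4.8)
The plan is to realize $E$ as the exceptional divisor of a morphism obtained by running a carefully chosen relative MMP. First I would reduce to a local problem: since the statement is about extracting a single divisorial place, I can shrink $X$ around $\mathrm{center}_X(E)$. Choose an effective $\mathbb Q$-divisor $H$ on $X$, passing through $\mathrm{center}_X(E)$, such that $(X,\Delta+H)$ is log canonical with $E$ still a non-klt place, and such that $E$ is (up to further shrinking) the unique non-klt place of $(X,\Delta+H)$ whose center has codimension $\geq 2$; concretely one takes $H$ supported away from the generic point of $\mathrm{center}_X(E)$ but containing all other "competing" centers, using that there are only finitely many divisorial lc places. Here it is important that $X$ is klt, so that for small coefficients $(X,\Delta + H)$ can be made to have $E$ as an isolated non-klt place.

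Next I would take a $\mathbb Q$-factorial dlt modification $\pi \colon W \to X$ of $(X,\Delta+H)$, which exists by \cite[Corollary 1.4]{HW18}, and write $K_W + \Delta_W + H_W = \pi^*(K_X+\Delta+H)$ with $\Delta_W + H_W = \pi_*^{-1}(\Delta + H) + \mathrm{Exc}(\pi)$ effective and $\lfloor \Delta_W + H_W\rfloor$ containing $E$. The divisor $E$ appears in $\lfloor \Delta_W+H_W\rfloor$ since it is an lc place. Now run a $(K_W + \pi_*^{-1}\Delta + E)$-MMP over $X$, equivalently (since $-H_W$ together with the other exceptional divisors is $\pi$-ample-ish) an MMP that contracts exactly the exceptional divisors other than $E$. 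Precisely: $K_W + \pi_*^{-1}\Delta + E + (H_W \text{ plus the remaining exceptional divisors with their discrepancy coefficients}) = \pi^*(K_X+\Delta)$ plus an effective $\pi$-exceptional divisor $\sum a_i E_i$ with $a_i > 0$ over all exceptional $E_i \neq E$ (using that $X$ is klt, so all these discrepancies are genuinely positive, and that $E$ has coefficient $1$). By the negativity lemma this MMP contracts precisely the $E_i$'s and nothing else, and terminates because each step strictly decreases a finite invariant (the MMP over $X$ for a klt base terminates by the three-dimensional MMP already available from \cite{HW18,HW17}). Let $g \colon Y \to X$ be the output; then $\mathrm{Exc}(g) = E$ as desired.

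The main obstacle I anticipate is twofold. First, making the choice of $H$ (or the auxiliary boundary) precise enough that after dlt modification the divisor to be kept is exactly $E$ and all others get positive discrepancy coefficients — this is where the klt hypothesis on $X$ and the bound on Cartier index ($6(K_X+\Delta)\sim 0$, hence things stay $\mathbb Q$-Gorenstein with controlled index) are used. One must be careful that $\Delta+H$ can indeed be arranged lc with $E$ an lc place and with the competing places pushed into the klt locus; shrinking $X$ and a standard perturbation argument handle this, but it requires that there are only finitely many lc places of $(X,\Delta)$ with center of codimension $\geq 2$, which holds for threefolds by existence of log resolutions in dimension three. Second, termination and the fact that the MMP contracts nothing extra: this is controlled by the negativity lemma exactly as in the proof of Lemma \ref{lemma:articulation_points} and as in \cite[Theorem 1.1]{HW17}, so I would invoke those. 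I expect the write-up to be short once $H$ is set up correctly.
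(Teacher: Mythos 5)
Your overall skeleton (pass to a dlt-type modification on which $E$ appears, arrange the boundary so that the relative log canonical divisor is numerically an effective exceptional divisor supported exactly on the divisors other than $E$, run the MMP over $X$, and conclude by the negativity lemma) is the same as the paper's. The genuine gap is in the sentence where you dispose of the MMP itself: you assert that the flips and termination are "already available from \cite{HW18,HW17}". They are not, and this is exactly where all the work in the paper's proof lies. The relative MMP results of \cite{HW17,HW18} apply to pairs whose boundary contains \emph{all} exceptional divisors with coefficient one (this is how they are invoked in Lemma \ref{lemma:articulation_points}); here the divisor $E$ you want to keep must appear with coefficient $a<1$, so the needed flips are not covered, and the general existence of klt flips in characteristic $5$ is precisely what Theorem \ref{theorem:flips} is proving — citing it would be circular. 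The paper instead constructs each flip by hand: any $(K_{Y_n}+\Delta'_n)$-negative extremal ray is negative on some coefficient-one exceptional divisor $E_i$, the proof of \cite[Lemma 3.1]{HW18} produces an exceptional divisor $E'$ positive on the ray, and — crucially — the crepant pullback $(Y_n,\Delta_n)$ of $(X,\Delta)$ is lc with $6(K_{Y_n}+\Delta_n)\sim_{\pi_n}0$ and contains $E'$ with coefficient one, so it serves as the $6$-complement required by Proposition \ref{proposition:flip_case3}. Your write-up never uses the hypothesis $6(K_X+\Delta)\sim 0$ in any substantive way, which is the tell-tale sign of the missing step: without it there is no known mechanism to produce these flips in characteristic $5$.

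Two smaller points. First, your auxiliary divisor $H$ is both problematic and unnecessary: if $H\geq 0$ contains $\mathrm{center}_X(E)$ in its support then $\mathrm{ord}_E(H)>0$, so $a_E(X,\Delta+H)<a_E(X,\Delta)=0$ and $(X,\Delta+H)$ is no longer lc with $E$ an lc place, contradicting your requirements; moreover the cutting trick only isolates \emph{some} non-klt place over the chosen locus, not the prescribed valuation $E$. The paper avoids all of this by taking a dlt modification of $(X,\Delta)$ itself on which $E$ appears (\cite[Corollary 1.4]{HW18}) and separating $E$ from the other lc places simply by replacing $\Delta$ with $(1-\epsilon)\Delta$, which uses the klt-ness of $X$ to make every other exceptional coefficient drop strictly below one while keeping $E$ with positive coefficient. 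Second, termination is handled in the paper by special termination rather than by appeal to a general termination statement; this is minor compared with the existence of the flips, but it should also not be waved through by citation.
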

\begin{proof}
Let $\pi \colon Y \to X$ be a dlt modification of $(X,\Delta)$ such that $E$ is a divisor on $Y$ (see \cite[Corollary 1.4]{HW18}). Let $\mathrm{Exc}(\pi) = E + E_1 + \ldots  + E_m$. Write
\begin{align*}
K_Y &+ \Delta_Y = \pi^*(K_X+\Delta) \\
K_Y &+ (1-\epsilon)\pi^{-1}_*\Delta + aE + a_1E_1 + \ldots + a_mE_m = \pi^*(K_X + (1-\epsilon)\Delta),
\end{align*}
where $a,a_1,\ldots, a_m < 1$ as $X$ is klt, and set
\[
\Delta' = (1-\epsilon)\pi^{-1}_*\Delta + aE + E_1 + \ldots + E_m. 
\]
By taking $0 < \epsilon \ll 1$, we can assume that $a>0$. Note that
\begin{equation} \label{eq:mmp_extraction}
K_Y + \Delta' \sim_{\Q, X} (1-a_1)E_1 + \ldots + (1-a_m)E_m,
\end{equation} {so that the $(K_Y + \Delta')$-MMP over $X$ will not contract $E$ and the contracted loci are always contained in the support of the strict transform of $(1-a_1)E_1 + \ldots + (1-a_m)E_m$.}
The negativity lemma implies that the output of a $(K_Y + \Delta')$-MMP over $X$ is the sought-for extraction of $E$. Hence, it is enough to show that we can run such an MMP.

By induction, we can assume that we have constructed the $n$-th step of the MMP $h \colon Y \dashrightarrow Y_n$ and we need to show that we can construct the $(n+1)$-st step. Let $\pi_n \colon Y_n \to X$ be the induced morphism, $\Delta'_{n} := h_* \Delta'$, and $\Delta_{n} = h_*\Delta_{Y}$. By abuse of notation, we denote the strict transforms of $E,E_1,\ldots, E_m$ by the same symbols. 

The cone theorem is valid by \cite{keel99} (cf.\ \cite[Theorem 2.4]{HW18}). Let $R$ be a $K_{Y_n} + \Delta'_n$ negative extremal ray. By (\ref{eq:mmp_extraction}), we have that $R \cdot E_i < 0$ for some $i \geq 1$. Thus the contraction $f \colon Y_n \to Y'_n$ of $R$ exists by \cite[Theorem 1.2 and Proposition 2.6]{HW18}. 

If $f$ is divisorial, then we set $Y_{n+1} := Y'_n$. If $f$ is a flipping contraction, then the proof of \cite[Lemma 3.1]{HW18} applied to $(Y_n, \Delta_{n})$ over $X$ implies the existence of a  divisor $E' \subseteq \mathrm{Exc}(\pi_n)$ such that $R \cdot E' > 0$. 
Since $(Y_n, \Delta_n')$ is dlt, $(Y_n, \Delta_n)$ is lc, $6(K_{Y_n} + \Delta_{n}) \sim_{\pi_n} 0$, and $E' \leq \Delta_n$, we can apply Proposition \ref{proposition:flip_case3} to infer the existence of the flip of $f$. 

The termination of this MMP follows by the usual special termination argument.
\end{proof}

Let $(X,S+B)$ be a three-dimensional plt pair with different $B_S$ and let $(X,S+B^c)$ be a $k$-complement with different $B^c_S$. Assume for simplicity that $S$ is normal.  Then $(S,B^c_S)$ is a $k$-complement of $(S,B_S)$. Assume that $(S,B^c_S)$ admits a unique non-klt place, i.e.\ it has a dlt modification with an irreducible exceptional curve. Such complements are of fundamental importance in this article due to Proposition \ref{proposition:no_klt_complements}. By inversion of adjunction, $(X,S+B^c)$ has a unique log canonical centre strictly contained in $S$ but infinitely many log canonical places over this center. Thus, its dlt modifications might be very complicated with many exceptional divisors. The following corollary shows that this problem may be solved by allowing qdlt singularities: under the above assumptions it stipulates that there exists a qdlt modification with an irreducible exceptional divisor.
\begin{corollary} \label{cor:qdlt_modification} Let $(X,S+B)$ be a $\mathbb Q$-factorial three-dimensional plt pair defined over a perfect field of characteristic $p>3$ where $X$ is klt and  $S$ is a prime divisor. Assume that $(X,S+B)$ admits a $6$-complement $(X,S+B^c)$ such that   $(\tilde S, B^c_{\tilde S})$ has a unique non-klt place, where $K_{\tilde S} + B^c_{\tilde S} = (K_X+S+B^c)|_{\tilde S}$ and $\tilde S$ is the normalisation of $S$. 

Then $(X,S+B^c)$ is qdlt in a neighbourhood of $S$, or $\lfloor B^c \rfloor$ is disjoint from $S$ and there exists a projective birational map $\pi \colon Y \to X$ such that $(Y,S_Y+B^c_Y)$ is qdlt over a neighbourhood of $S$, the exceptional divisor $E$ is irreducible {and $E \subseteq \lfloor B_Y^c \rfloor$}, where $K_Y + S_Y + B_Y^c = \pi^*(K_X+S+B^c)$.
\end{corollary}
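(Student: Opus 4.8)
The plan is to reduce the statement to an application of Proposition~\ref{prop:single_extraction}, after dealing with the easy case separately. First I would dispose of the case where $(X,S+B^c)$ is already qdlt in a neighbourhood of $S$ --- then there is nothing to prove. So assume $(X,S+B^c)$ is not qdlt near $S$. By inversion of adjunction (Lemma~\ref{lem:qdlt_inv_adjunction}), since $(\tilde S, B^c_{\tilde S})$ has a non-klt place, the pair $(X,S+B^c)$ must have a log canonical centre contained in $S$ of codimension at least two. The hypothesis that $(\tilde S,B^c_{\tilde S})$ has a \emph{unique} non-klt place, combined with F-adjunction / the structure of plt pairs, should force $(X,S+B^c)$ to have a unique non-klt valuation $E$ over $X$ with centre contained in $S$; moreover, since this valuation does not restrict to $S$ itself (as $S$ is plt, hence normal, with $(\tilde S, B_{\tilde S})$ klt before complementing), the divisor $\lfloor B^c\rfloor$ near $S$ can only be $S$ itself, i.e.\ $\lfloor B^c\rfloor$ is disjoint from $S$ away from $S$. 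This gives the dichotomy in the statement.

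Next, having isolated the unique non-klt valuation $E$ of $(X,S+B^c)$ over $X$, I would apply Proposition~\ref{prop:single_extraction} to $(X,S+B^c)$: since $X$ is klt, $(X,S+B^c)$ is lc with $6(K_X+S+B^c)\sim 0$ (it is a $6$-complement), and $E$ is a non-klt valuation, the proposition produces a projective birational morphism $\pi\colon Y\to X$ whose exceptional locus is precisely $E$. Writing $K_Y+S_Y+B^c_Y=\pi^*(K_X+S+B^c)$, the exceptional divisor $E$ appears with coefficient one in $S_Y+B^c_Y$ (it is a non-klt, i.e.\ log canonical, place), so $E\subseteq\lfloor B^c_Y\rfloor$ once we note $E\neq S_Y$. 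It remains to check that $(Y,S_Y+B^c_Y)$ is qdlt in a neighbourhood of $S$ (equivalently, over a neighbourhood of $S_Y$). For this I would use that $(Y,S_Y+B^c_Y)$ is crepant over $(X,S+B^c)$ and has exactly the two relevant divisorial centres $S_Y$ and $E$ near $S_Y$, meeting along an irreducible curve $C=S_Y\cap E$; restricting to the normalisation $\tilde S_Y$ of $S_Y$ via adjunction, $(K_Y+S_Y+E+B^c_Y)|_{\tilde S_Y}=K_{\tilde S_Y}+C_{\tilde S_Y}+(\text{rest})$, and the uniqueness-of-non-klt-place hypothesis for $(\tilde S,B^c_{\tilde S})$ translates, after the crepant pullback, into $(\tilde S_Y, C_{\tilde S_Y}+\cdots)$ being plt. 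Then Lemma~\ref{lem:qdlt_inv_adjunction} (inversion of adjunction for qdlt) applies and gives that $(Y,S_Y+E+B^c_Y)=(Y,S_Y+B^c_Y)$ is qdlt in a neighbourhood of $S_Y$.

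The main obstacle I anticipate is establishing cleanly that the non-klt valuation $E$ of $(X,S+B^c)$ over $X$ is \emph{unique} (so that Proposition~\ref{prop:single_extraction} extracts a single divisor and the modification has irreducible exceptional locus) and that this valuation is not the strict transform of $S$. The uniqueness on $X$ is not the same statement as uniqueness on $\tilde S$: a priori $(X,S+B^c)$ could acquire extra non-klt places over $X$ that are invisible on $S$. To handle this I would argue that any non-klt valuation of $(X,S+B^c)$ has centre meeting $S$ (otherwise, if its centre were disjoint from $S$, one localises away from $S$ and uses that $B$ --- and hence $B^c$ away from $S$ --- can be taken with small enough floor, or uses that we only care about a neighbourhood of $S$), and then by the Koll\'ar--Shokurov connectedness theorem (as used elsewhere in the excerpt, e.g.\ \cite[Theorem 5.2]{tanaka16_excellent}) applied to $-(K_X+S+B^c)\sim_{\Q}0$ over the relevant base, the non-klt locus is connected; combined with adjunction to $S$, where there is a unique non-klt place, this pins down $E$ uniquely and forces $E\neq S_Y$. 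Once that bookkeeping is in place, the rest is a direct appeal to Proposition~\ref{prop:single_extraction} and Lemma~\ref{lem:qdlt_inv_adjunction}.
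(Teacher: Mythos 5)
There is a genuine gap, and it sits exactly at the point the corollary was designed to address. You assert that the uniqueness of the non-klt place of the surface pair $(\tilde S, B^c_{\tilde S})$, "combined with F-adjunction / the structure of plt pairs", forces $(X,S+B^c)$ to have a \emph{unique} non-klt valuation over $X$ with centre in $S$. This is false in general: the paper points out, right before the statement, that $(X,S+B^c)$ has a unique log canonical \emph{centre} strictly contained in $S$ but typically \emph{infinitely many} log canonical \emph{places} over that centre -- this is precisely why its dlt modifications can have many exceptional divisors and why one passes to qdlt modifications at all. Your proposed fallback, the Koll\'ar--Shokurov connectedness theorem, gives connectedness of the non-klt locus (equivalently of the lc centres), not uniqueness of divisorial valuations of log discrepancy zero, so it cannot repair the claim. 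Consequently your appeal to Proposition \ref{prop:single_extraction} extracts \emph{some} lc place, but with no control over which one; for an arbitrary choice the resulting pair $(Y,S_Y+B^c_Y)$ may well fail to be qdlt over a neighbourhood of $S$, because there can remain lc centres that meet $E$ but are disjoint from $S_Y$ (a codimension-two centre contained only in the single divisor $E$ violates the qdlt condition). Your verification via Lemma \ref{lem:qdlt_inv_adjunction} only yields qdlt-ness in a neighbourhood of $S_Y$, which is strictly weaker than qdlt-ness over a neighbourhood of $S$, and it says nothing about the part of $E$ away from $S_Y$.

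The paper's proof supplies exactly the two ingredients your argument is missing. First, the divisor to be extracted is not "the" non-klt valuation but a carefully chosen one: an exceptional divisor $E_1$ of a dlt modification of $(X,S+B^c)$ that is \emph{not an articulation point} of the dual complex $D(\Delta_Y^{c,=1})$ (e.g.\ a node at maximal edge-distance from $S$). Second, after extracting $E_1$ by Proposition \ref{prop:single_extraction} and proving qdlt-ness near $S_1$ as you do (crepant pullback to $\tilde S_1$ is plt by the uniqueness hypothesis, then Lemma \ref{lem:qdlt_inv_adjunction}), one must exclude lc centres of $(X_1,S_1+E_1+B^c_1)$ meeting $E_1$ but disjoint from $S_1$; this is done by showing that such a centre would make the strict transform of $E_1$ an articulation point on a common log resolution, and then Lemma \ref{lemma:articulation_points} (whose proof runs an MMP in place of weak factorisation) transfers this back to $D(\Delta_Y^{c,=1})$, contradicting the choice of $E_1$. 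Separately, your treatment of the dichotomy is also incomplete: the statement requires that in the non-qdlt case $\lfloor B^c\rfloor$ is disjoint from $S$, which the paper obtains by proving the contrapositive -- if $\lfloor B^c\rfloor$ meets $S$, then its restriction to $\tilde S$ must be the unique non-klt place, so $(\tilde S,B^c_{\tilde S})$ is plt and Lemma \ref{lem:qdlt_inv_adjunction} gives qdlt-ness directly -- whereas your sketch of this step is not a proof.
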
 
 In particular, this corollary implies that if $(X,S+B^c)$ is not qdlt, then the log canonical centres in a neighbourhood of $S$ are the generic points of $\pi(S_Y \cap E)$, $\pi(E)$, and $S$ itself. Note that $S_Y \cap E$ must be irreducible as $(\tilde S, B^c_{\tilde S})$ has a unique log canonical place. Now there are two possibilities: either $\pi(E)\subseteq S$ in which case $(X,S+B)$ admits a unique log canonical centre $\pi(E)=\pi(S_Y\cap E)$ (a point or a curve), or $\pi(E) \not \subseteq S$ is a curve intersecting $S$ at the point $\pi(S_Y \cap E)$. Moreover, if $(X,S+B^c)$ is qdlt, then the proof below shows that $\lfloor B^c \rfloor$ is irreducible in a neighbourhood of $S$ and intersects $S$ at its unique non-klt place (which is a curve).
\begin{proof}
We work in a sufficiently small open neighbourhood of $S$. First, suppose that $\lfloor B^c \rfloor$ is non-empty and intersects $S$. Under this assumption the unique log canonical centre of $(\tilde S,B^c_{\tilde S})$ must be an irreducible curve given as $\lfloor B^c \rfloor|_{\tilde S}$. In particular,  $\lfloor B^c \rfloor$ is irreducible (cf.\ Remark \ref{remark:generic_point_of_stratum}), the pair $(\tilde S, B^c_{\tilde S})$ is plt, and $(X,S+B^c)$ is qdlt by Lemma \ref{lem:qdlt_inv_adjunction}.

Thus, we can assume that $\lfloor B^c \rfloor = 0$, and so the dlt modification $\pi \colon Y\to X$ is nontrivial. Set $K_Y + \Delta^c_Y = \pi^*(K_X+S+B^c)$ and pick an irreducible exceptional divisor $E_1$ which is not an articulation point of $D(\Delta_Y^{c,=1})$ (for example pick any divisor with the farthest distance edge-wise in $D(\Delta_Y^{c,=1})$ from the node corresponding to $S$). Let $g\colon X_1 \to X$ be the extraction of $E_1$ (see Proposition \ref{prop:single_extraction}) and write 
\[
K_{X_1}+S_1+E_1+B_1^c=g^*(K_X+S+B^c)
\]
where $S_1,B^c_1$ are the strict transforms of $S,B^c$, respectively. {Note that $S_1$ intersects $E_1$.}

We claim that $(X_1,S_1+E_1+B_1^c)$ is qdlt in a neighbourhood of $S_1$. To this end we note that 
\[
K_{\tilde S_1} + B_{\tilde S_1}^c := (K_{X_1} + S_1 + E_1 + B_1^c)|_{\tilde S_1} = g^*(K_{\tilde S} + B^c_{\tilde S}),
\]
where $\tilde S_1$ is the normalisation of $S_1$. Since $(\tilde S, B^c_{\tilde S})$ admits a unique non-klt place, we get that $(\tilde S_1, B_{\tilde S_1}^c)$ is plt. In particular, Lemma \ref{lem:qdlt_inv_adjunction} implies that $(X_1,S_1+E_1+B_1^c)$ is qdlt in a neighbourhood of $S_1$. 

Therefore, it is enough to show that $(X_1,S_1+E_1+B_1^c)$ does not admit a log canonical centre which is disjoint from $S_1$ and intersects $E_1$. By contradiction, assume that it does admit such a log canonical centre. Let $h \colon W \to X_1$ be a projective birational morphism which factors through $Y$ 
\[
g \circ h \colon W \xrightarrow{h_Y} Y \xrightarrow{\pi} X,
\]
and  such that $g \circ h$ is a log resolution of $(X,S+B)$. Write $K_W + \Delta^c_W= h^*(K_{X_1}+S_1+E_1+B_1^c)$. Since $S_1 \cap E_1$ is disjoint from the other log canonical centres, the strict transform $E_{W,1}$ of $E_1$ is an articulation point of $D({\Delta_W^{c,=1}})$. Since $K_W + \Delta^c_W = h_Y^*(K_Y + \Delta^c_Y)$, Lemma \ref{lemma:articulation_points} implies that $E_1$ is an articulation point of $D(\Delta_Y^{c,=1})$ which is a contradiction. In particular, $S_1$, $E_1$, and the irreducible curve $S_1 \cap E_1$ are the only log canonical centres of $(X_1,S_1+E_1+B_1^c)$.
\end{proof}

\section{Existence of flips}
In this section we prove the main theorem. We start by showing the following result.
\begin{theorem} \label{theorem:almost_flips} Let $(X,\Delta)$ be a $\Q$-factorial klt three-dimensional pair with standard coefficients defined over a perfect field $k$ of characteristic $p=5$. If $f \colon X\to Z$ is a flipping contraction, then the flip $f^+ \colon X^+\to Z$ exists.\end{theorem}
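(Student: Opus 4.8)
The plan is to reduce Theorem \ref{theorem:almost_flips} to the building-block result of Proposition \ref{proposition:building_blocks_qdlt} by means of a carefully chosen auxiliary MMP. First I would perform the standard reductions: by \cite[Theorem 6.3]{birkar13} we may assume $\Delta$ has standard coefficients, and by perturbation plus reduction to pl-flips (adding a general member of a suitable linear system and using $\Q$-factoriality and $\rho(X/Z)=1$) we may assume $\Delta = S+B$ with $(X,S+B)$ plt, $S$ irreducible, $-(K_X+S+B)$ and $-S$ both $f$-ample, and $f$ a pl-flipping contraction. Writing $K_{\tilde S}+B_{\tilde S}=(K_X+S+B)|_{\tilde S}$ on the normalisation $\tilde S$ of $S$, if $(\tilde S,B_{\tilde S})$ is relatively globally F-regular over $f(S)$ we are done immediately by Theorem \ref{thm:hx_flips}; so we may assume it is not.

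The heart of the argument is then the following. By Theorem \ref{thm:lift_complements} there is an $m$-complement $(X,S+B^c)$ of $(X,S+B)$ for some $m\in\{1,2,3,4,6\}$ in a neighbourhood of $\mathrm{Exc}(f)$, and since $(\tilde S,B_{\tilde S})$ is not relatively F-regular, Remark \ref{rem:lift_for_non_f_regular} forces $m=6$ and, via Remark \ref{rem:surfaces_char_5} and Proposition \ref{proposition:no_klt_complements}, $(\tilde S,B^c_{\tilde S})$ has a unique non-klt place. Now apply Corollary \ref{cor:qdlt_modification}: either $(X,S+B^c)$ is already qdlt near $S$ (take $\pi=\mathrm{id}$), or there is a projective birational $\pi\colon Y\to X$ with irreducible exceptional divisor $E\subseteq\lfloor B^c_Y\rfloor$ such that $(Y,S_Y+B^c_Y)$ is qdlt near $S$, where $K_Y+S_Y+B^c_Y=\pi^*(K_X+S+B^c)$. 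Write also $K_Y+S_Y+B_Y=\pi^*(K_X+S+B)$. The divisor $B_Y$ need not be effective; to fix this, replace $B_Y$ by a convex combination $B_Y^t := (1-t)B_Y+tB^c_Y$ for small $t>0$, which is effective, has $(Y,S_Y+B_Y^t)$ qdlt (it is $\leq S_Y+B^c_Y$ in the relevant sense and bigger than a klt perturbation), and still satisfies $-(K_Y+S_Y+B_Y^t)$ ample over $Z$ near $\mathrm{Exc}$; its flip over $Z$, if it exists, has the same Proj as that of $K_Y+S_Y+B_Y$, hence pushes down to the flip of $f$ by the negativity lemma since $E$ is not contracted.

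Next I would run a $(K_Y+S_Y+B_Y^t)$-MMP over $Z$. Since $\pi$ extracts only the divisor $E$ and $E$ cannot be contracted by this MMP (its discrepancy would have to drop, contradicting the negativity lemma as in Proposition \ref{prop:single_extraction}), any divisorial contraction in the MMP produces a small birational model of $X$ over $Z$ with strictly larger discrepancies, hence not $X$ itself, so it must be the flip; thus it suffices to show every step of this MMP exists. Each step is a flip or a divisorial contraction of a $(K_{Y}+S_Y+B_Y^t)$-negative extremal ray $R$; the existence of the contraction follows from \cite{keel99} together with \cite[Theorem 1.2, Proposition 2.6]{HW18}. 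For a flipping contraction, the key point is that the step is $(K_Y+S_Y+B^c_Y)$-trivial over $Z$ up to the linear combination, so $B^c_Y$ remains a $6$-complement on the target, and because $\lfloor B_Y^{c}\rfloor$ contains $S_Y$ (and possibly $E$), the flipping curve $\Sigma$ satisfies $\Sigma\cdot S_Y<0$ or $\Sigma\cdot E<0$ for a component of $\lfloor B^c_Y\rfloor$; we then invoke Proposition \ref{proposition:building_blocks_qdlt} (whose hypotheses allow non-standard coefficients on $B_Y^t$, handled by the $+bE$ trick in its proof) to produce the flip. To keep the qdlt-ness of $(\cdot,S_\bullet+B^c_\bullet)$ available after each step we use that divisorial contractions automatically terminate the process into the flip, while flops/flips of the $(K+\Delta^c)$-trivial steps preserve qdlt-ness by Lemma \ref{lemma:flopping_dlt}, except in the one degenerate case $S'_1\cap S'_2=\emptyset$ of that lemma, where Proposition \ref{proposition:flip_case2} (the pencil construction, using $\Sigma\cdot S<0$ and $\Sigma\cdot E<0$) builds the flip directly. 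Termination of the auxiliary MMP is by special termination, since each step lies over the fixed flipping locus and increases discrepancies.

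The main obstacle I anticipate is bookkeeping the qdlt-ness of the complement $(\cdot,S_\bullet+B^c_\bullet)$ along the MMP: the steps are $(K+\Delta^c)$-relatively trivial so they are flops for the complement, and one must check Lemma \ref{lemma:flopping_dlt} applies at each flop (it is stated for two divisorial centres $S_1,S_2$ with $\Sigma\cdot S_1<0$), identify the exceptional case, and verify that in that exceptional case one really does land in the hypotheses of Proposition \ref{proposition:flip_case2}. A secondary subtlety is ensuring $B_Y^t$ stays effective with the right positivity and that passing to the convex combination does not destroy the property that the unique non-klt place of the surface complement is preserved; this should follow because the relevant surface pair $(\tilde S_\bullet, B^c_{\tilde S_\bullet})$ is unchanged. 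Once these are in hand, the negativity lemma identifies the output with $X^+$ and the proof concludes.
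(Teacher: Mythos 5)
Your overall strategy is the paper's: reduce to pl-flips, dispose of the F-regular case by Theorem \ref{thm:hx_flips}, lift a $6$-complement via Theorem \ref{thm:lift_complements} and Remark \ref{rem:lift_for_non_f_regular}, pass to the qdlt modification of Corollary \ref{cor:qdlt_modification}, and run an auxiliary MMP over $Z$ whose steps are supplied by Proposition \ref{proposition:building_blocks_qdlt}, Proposition \ref{proposition:flip_case3} and Lemma \ref{lemma:flopping_dlt}. But the decisive step --- identifying the output of that MMP with the flip --- is wrong as you set it up. You assert that $E$ cannot be contracted, ``as in Proposition \ref{prop:single_extraction}''; that argument does not transfer, because here $K_{X_1}+\Delta_1\sim_{Z,\Q}\lambda\mu S_1+\lambda' E_1$ with $E_1$ allowed in the support, and in fact contracting the strict transform of $E$ is precisely how the flip is obtained: since $E$ is the unique divisor exceptional over $Z$, any divisorial contraction of the MMP over $Z$ must contract it (so your sentence is internally inconsistent --- if $E$ were never contracted there would be no divisorial contractions at all). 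Under your reading the MMP could simply terminate in a relative minimal model with $E_n$ still present, which is not small over $Z$ and not the flip, and you offer no argument for this case. The paper's Claim \ref{c-3} is exactly about this: when $K_{X_n}+\Delta_n$ becomes nef it rules out $g(E_1)\subseteq S$ by a negativity computation, and otherwise proves $K_{X_n}+\Delta_n\sim_{Z,\Q}\lambda\mu S_n$ is semiample (Keel's theorem, \cite[Theorem 1.1]{CT17}, abundance for surfaces over imperfect fields, and normality of $S_n$ up to universal homeomorphism) so that the associated fibration contracts $E_n$. Likewise, in the degenerate case $S_{n+1}\cap E_{n+1}=\emptyset$ of Lemma \ref{lemma:flopping_dlt} you appeal to Proposition \ref{proposition:flip_case2}, whose hypotheses ($\Sigma\cdot S<0$ and $\Sigma\cdot E<0$, forcing the flipping locus into $S\cap E$) cannot hold when the two divisors are disjoint; the paper instead shows $S_{n+1}$ is nef with $\mathbb{E}(S_{n+1})\subseteq E_{n+1}$ and contracts $E_{n+1}$ directly by semiampleness.

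Even granting a contraction $\phi\colon X_n\to X^+$ of $E_n$, ``not $X$ itself, so it must be the flip'' is a gap: with $\rho(X^+/Z)\leq 1$ the divisor $K_{X^+}+\Delta^+$ could a priori be relatively anti-ample, numerically trivial, or ample; the discrepancy increase excludes only the first, and excluding the numerically trivial case requires an argument --- the paper shows $K_{X^+}+\Delta^+\sim_{Z,\Q}\lambda\mu S^+$ would then be semiample and descend $S^+$ to $Z$, contradicting that the image of $S$ in $Z$ is not $\Q$-Cartier. Two smaller slips: the effectivity fix needs the convex combination weighted towards the complement, as in $\Delta_1=\lambda(S_1+B_1+aE_1)+(1-\lambda)(S_1+E_1+B_1^c)$ with $0<\lambda\ll 1$, whereas your $B_Y^t=(1-t)B_Y+tB_Y^c$ with small $t$ stays non-effective whenever $a<0$; and this combination is only relatively anti-nef (proportional to $g^*(K_X+S+B)$ over $Z$), not relatively ample as you claim.
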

\begin{proof}
 We will assume throughout that $Z$ is a sufficiently small affine neighbourhood of $Q := f(\mathrm{Exc}(f))$.\ We say that a $\Q$-Cartier divisor $D$ is ample if it is relatively ample over $Z$.

By Shokurov's reduction to pl-flips, it suffices to show the existence of pl-flips. Let $(X,S+B)$ be a plt pair with standard coefficients and $f \colon X\to Z$ a pl-flipping contraction. In particular $-S$ and $-(K_X+S+B)$ are $f$-ample,  and so $\mathrm{Exc}(f) \subseteq S$.  By Theorem \ref{thm:hx_flips}, the flip exists unless $(\tilde S,B_{\tilde S})$ is not globally F-regular over $T=f(S)$ where $K_{\tilde S}+B_{\tilde S}=(K_X+S+B)|_{\tilde S}$ and $\tilde S$ is the normalisation of $S$. Thus, we can assume that $(\tilde S,B_{\tilde S})$ is not globally F-regular over $T$. 

 Theorem \ref{thm:lift_complements} shows the existence of an $m$-complement $(X,S+B^c)$ of $(X,S+B)$. Since $(X,S+B)$ is not relatively purely F-regular, Remark \ref{rem:lift_for_non_f_regular} implies that $m=6$.  Let $(\tilde S, B^c_{\tilde S})$ be the induced 6-complement of $(\tilde S,B_{\tilde S})$.  By Proposition \ref{proposition:no_klt_complements}, the pair $(\tilde S,B^c_{\tilde S})$ has a unique place $C$ of log discrepancy zero which is exceptional over $T$.

If $(X,S+B^c)$ is qdlt, then the flip exists by Proposition \ref{proposition:building_blocks_qdlt}. Thus, by Corollary \ref{cor:qdlt_modification}, we may assume that $\lfloor B^c \rfloor = 0$ and there exists a qdlt modification $g \colon X_1 \to X$  of $(X, S + B^c)$ with an irreducible exceptional divisor $E_1$. Let $f_1 \colon X_1 \to Z$ be the induced map to $Z$, and write $K_{X_1} + S_1 + B_1 + aE_1 = g^*(K_X+S+B)$, and $K_{X_1} + S_1 + B^c_1 + E_1 = g^*(K_X+S+B^c)$. In particular, $S_1 \cap E_1$ is the unique log canonical place of $(\tilde S, B_{\tilde S})$, and so there are two possibilities: either $g(E_1)\subseteq S$ and $f_1(E_1)=Q$, or $g(E_1) \not \subseteq S$ is a curve intersecting $S$.

We would like to run a $(K_{X_1}+S_1+B_1+aE_1)$-MMP. It could possibly happen that $a<0$ so we take $0< \lambda \ll 1$ and set
\[
\Delta_1 := \lambda(S_1 + B_1 + aE_1) + (1-\lambda)(S_1 + E_1 + B_1^c),
\]
so that $K_{X_1} + \Delta_1 \sim_{\mathbb{Q}, Z} \lambda(K_{X_1} + S_1 + B_1 + aE_1)$, and $(X_1, \Delta_1)$ is plt. 

Since $\rho (X/Z)=1$ and both $-(K_X+S+B)$ and $-S$ are ample over $Z$, it follows that $K_X+S+B\sim _{Z,\Q}\mu S$ for some $\mu>0$ and so 
\begin{equation} \label{eq:support_of_the_mmp_divisor} 
K_{X_1} + \Delta_1 \sim_{Z,\Q} \lambda(K_{X_1}+S_1+aE_1+B_1)\sim _{Z,\Q} \lambda \mu S_1+\lambda ' E_1, 
\end{equation}
where $\lambda' \geq 0$. Note that $\lambda'>0$ if $g(E_1) \subseteq S$ and $\lambda'=0$ if $g(E_1) \not \subseteq S$.
\begin{claim}\label{c-3} {There exists a sequence of $(K_{X_1}+\Delta_1)$-flips $X_1 \dasharrow \ldots \dasharrow X_n$ over $Z$ such that either $X_n$ admits a $(K_{X_n}+\Delta_n)$-negative contraction of $E_n$ of relative Picard rank one, or $K_{X_n}+\Delta_n$ is semiample with the associated fibration contracting $E_n$. Here $\Delta_n$ and $E_n$ are strict transforms of $\Delta_1$ and $E_1$, respectively.} 
\end{claim}
In the course of the proof we will show that the qdlt-ness of $(X_1,S_1+E_1+B_1^c)$ is preserved (see Lemma \ref{lemma:flopping_dlt}) except possibly at the very last step before the contraction takes place. Therefore, all the flips in this MMP exist by Proposition \ref{proposition:building_blocks_qdlt}.
\begin{proof} 
Let $f_i \colon X_i \to Z$ be the induced map to $Z$. Since we work over a sufficiently small neighbourhood of $Q \in Z$, we can assume that all the flipped curves are contracted to $Q$ under $f_i$, and so $X_1 \dashrightarrow X_n$ is an isomorphism over $Z \, \backslash \, \{Q\}$.   Let $(X_i, \Delta_i)$ and $(X_i, S_i + E_i + B^c_i)$ be the appropriate strict transforms. The latter pair is a $6$-complement of $(X_i,S_i+E_i+B_i)$, where the strict transforms $B_i$ of $B_1$ have standard coefficients. Note that $E_1$ is not contracted as $X_1 \dasharrow \ldots \dasharrow X_n$ is a sequence of flips, thus inducing an isomorphism on the generic point of $E_1$.  

Suppose that $K_{X_n}+\Delta_n$ is nef. There are two cases: either $g(E_1) \subseteq S$ and $f_1(E_1)=Q$, or $g(E_1) \not \subseteq S$. We claim that the former cannot happen. Indeed, assume that $f_1(E_1)=Q$ and let $\pi_1 \colon W \to X_1$ and  $\pi_n \colon W \to X_n$ be a common resolution of $X_1$ and $X_n$ such that $\pi_1$ and $\pi_n$ are isomorphisms over $Z \, \backslash \, \{Q\}$. Since $K_{X_n} + \Delta_n$ is nef and $K_{X_1} + \Delta_1$ is anti-nef (but not numerically trivial) over $Z$, 
\[
\pi_n^*(K_{X_n} + \Delta_n) - \pi_1^*(K_{X_1} + \Delta_1)
\]
is exceptional, nef, and anti-effective over $Z$ by the negativity lemma. Moreover, its support must be equal to the whole exceptional locus over $Z$  as it is non-empty and contracted to $Q$ under the map to $Z$ (cf.\ \cite[Lemma 3.39(2)]{km98}). This is impossible, because $E_1$ is not contained in its support while $f_1(E_1)=Q$. 

Now, assuming that $g(E_1) \not \subseteq S$ is a curve intersecting $S$, we will show that $K_{X_n} +\Delta_n \sim_{\Q,Z} \lambda \mu S_n$ is semiample. Let $G := f_n^{-1}(P)$ for a (non-necessarily closed) point $P \in Z$. By \cite[Theorem 1.1]{CT17} it is enough to show that $S_n|_G$ is semiample. Since $X_1 \dashrightarrow X_n$ is an isomorphism over $Z \, \backslash \, \{Q\}$, $S_1 = g^*S$, and $S$ is semiample over $Z \, \backslash \, \{Q\}$,  we get that $S_n|_G$ is semiample when $P \neq Q$. Thus, we may assume that $P=Q$. By \cite[Theorem]{keel99}, it is enough to verify that $S_n|_{\mathbb{E}(S_n|_G)}$ is semiample. Since $G$ is one-dimensional, every connected component of $\mathbb{E}(S_n|_G) \subseteq G$ is either entirely contained in $S_n$ or is disjoint from it. In particular, it is enough to show that $S_n|_{S_n}$, or equivalently $(K_{X_n}+\Delta_n)|_{S_n}$, is semiample. Recall that $S_n \subseteq \lfloor \Delta_n \rfloor$, and so $K_{\tilde S_n} + \Delta_{\tilde S_n} = (K_{X_n}+\Delta_n)|_{\tilde S_n}$ is semiample by \cite[Theorem 1.1]{tanakaimperfect}, where $\tilde S_n$ is the normalisation of $S_n$. Since $\tilde S_n \to S_n$ is a universal homeomorphism (see \cite[Theorem 1.2]{HW18}), $(K_{X_n}+\Delta_n)|_{S_n}$ is semiample and so is $K_{X_n} +\Delta_n$. Since  $(K_{X_n}+\Delta_n)|_{E_n}$ is relatively numerically trivial over $Z \, \backslash \, \{Q\}$ (as so is $(K_{X_1}+\Delta_1)|_{E_1}$), we get that the associated semiample fibration contracts $E_n$.\\

From now on, $K_{X_n}+\Delta_n$ is not nef. In order to run the MMP, we assume that $(X_n,S_n + E_n + B_n^c)$ is qdlt by induction. The cone theorem is valid by \cite{keel99} (cf.\ \cite[Theorem 2.4]{HW18}). Pick $\Sigma_n$ a $(K_{X_n}+\Delta_n)$-negative extremal curve. By (\ref{eq:support_of_the_mmp_divisor}), we have $K_{X_n} + \Delta_n \sim _{Z,\Q} \lambda \mu S_n+\lambda ' E_n$, and so $\Sigma_n \cdot S_n < 0$ or $\Sigma_n \cdot E_n < 0$. The contraction of $\Sigma_n$ exists by \cite[Theorem 1.2 and Proposition 2.6]{HW18} applied to $(X_n,\Delta_n)$ in the former case and to $(X_n, S_n+E_n + B_n)$ in the latter (\cite[Theorem 1.2 and Proposition 2.6]{HW18} assumes that the singularities are dlt, but we can immediately reduce the qdlt case to the plt case by making the coefficients smaller).

If the corresponding contraction is divisorial, then we are done  as it must contract $E_n$. Hence, we can assume that $\Sigma_n$ is a flipping curve. If $E_n \cdot \Sigma_n \leq 0$, then $-(K_{X_n}+ S_n + B_n + E_n)$ has standard coefficients, is qdlt, and ample over the contraction of $\Sigma_n$, so the flip exists by Proposition \ref{proposition:building_blocks_qdlt} as $(X_n,S_n+E_n+B^c_n)$ is a $6$-complement. If $E_n \cdot \Sigma_n > 0$, then the flip exists by Proposition \ref{proposition:flip_case3} applied to $(X_n, \Delta_n)$. \\

To conclude the proof we shall  show that $(X_{n+1}, S_{n+1} + E_{n+1} + B^c_{n+1})$ is qdlt unless $X_{n+1}$ admits a contraction of $E_{n+1}$.  By Lemma \ref{lemma:flopping_dlt}, we can suppose that $S_{n+1} \cap E_{n+1} = \emptyset$ and aim for showing that the sought-for contraction exists. 

Let $\Sigma'$ be a curve which is exceptional over $Q\in Z$, contained neither in $S_{n+1}$ nor $E_{n+1}$, but intersecting $S_{n+1}$ (it exists by connectedness of the exceptional locus over $Q\in Z$, and the fact that both $S_{n+1}$ and $E_{n+1}$ intersect this exceptional locus), and let $C \subseteq E_{n+1}$ be any exceptional curve such that $C \cdot E_{n+1} <0$ (it exists by the negativity lemma as $E_{n+1}$ is exceptional over $Z$). We claim that $C' \cdot S_{n+1}  > 0$ for every exceptional curve $C' \not \subseteq E_{n+1}$. To this end, assume by contradiction that there  exists $C' \not \subseteq E_{n+1}$ satisfying $C' \cdot S_{n+1} \leq 0$. Since $\rho(X_{n+1}/Z)=2$, we get that
\[
C' \equiv  aC + b\Sigma',
\]
for $a, b \in \R$. Given $C \cdot S_{n+1} = 0$ and $\Sigma' \cdot S_{n+1} > 0$, we have $b \leq 0$. As $C' \cdot E_{n+1}\geq 0$, $C \cdot E_{n+1} < 0$, and $\Sigma' \cdot E_{n+1} \geq 0$, we have $a \leq 0$. Therefore, for an ample divisor $A$ we have
\[
0 < C' \cdot A = (aC + b\Sigma') \cdot A \leq 0 
\] 
which is a contradiction. 

Since $S_{n+1} \cap {E_{n+1}}$ is empty, $S_{n+1}$ is thus nef and $\mathbb{E}(S_{n+1}) \subseteq E_{n+1}$ (see \cite{CT17} for the definition of $\mathbb{E}$ in the relative setting). Hence $S_{n+1}$ is semiample by \cite[Proposition 2.20]{CT17} and induces a  contraction of $E_{n+1}$. It does not contract $\Sigma'$, and so is of relative Picard rank one. Moreover, 
\[
(K_{X_{n+1}}+\Delta_{n+1}) \cdot C \sim_{Z,\Q} \mu\lambda S_{n+1} \cdot C + \lambda' E_{n+1} \cdot C = \lambda' E_{n+1} \cdot C \leq 0,
\]
and so either $\lambda'=0$ and $K_{X_{n+1}}+\Delta_{n+1}\sim_{Z,\Q} \mu\lambda S_{n+1}$ is semiample with the associated fibration contracting $E_{n+1}$, or $\lambda' > 0$, $(K_{X_{n+1}}+\Delta_{n+1}) \cdot C < 0$, and so the above contraction is a $(K_{X_{n+1}}+\Delta_{n+1})$-negative Mori contraction of relative Picard rank one.

\end{proof}

Let $\phi \colon X_{n} \to X^+$ be the contraction of $E_n$ as in the claim, let $\Delta^+ := \phi_* \Delta_{n}$, let $S^+ := \phi_* S_n$, and let $B^+ := \phi_*B_n$. The projection onto $Z$ factors through a small contraction $\pi^+ \colon X^+ \to Z$ and  $\rho(X^{+}/Z)\leq 1$. Recall that 
\[
K_{X_n}+ \Delta_n \sim_{Z,\Q} \lambda(K_{X_n}+S_n+aE_n+B_n)\sim _{Z,\Q} \lambda \mu S_n+\lambda ' E_n. 
\]
Since $\phi$ is either $(K_{X_n}+S_n+aE_n+B_n)$-negative of Picard rank one, or $(K_{X_n}+S_n+aE_n+B_n)$-trivial, the discrepancies of $(X^+, S^+ + B^+)$ are not smaller than those of $(X_n,S_n+aE_n+B_n)$. Moreover, since $K_{X_1}+S_1+aE_1+B_1$ is anti-nef over $Z$ and not numerically trivial, at least one step of the $(K_{X_1}+\Delta_1)$-MMP (equivalently, $(K_{X_1}+S_1+aE_1+B_1)$-MMP) has been performed (that is, $n \geq 2$, or $\phi$ is a $(K_{X_n} + \Delta_n)$-negative contraction of $E_n$). In particular, there exists a divisorial valuation for which the discrepancy of $(X^+,S^+ + B^+)$ is higher than the discrepancy of $(X_1,S_1+aE_1+B_1)$, which in turn coincide with the discrepancy of $(X,S+B)$.

Therefore, $K_{X^+} + \Delta^+$ cannot be relatively anti-ample, because then $(X^+, S^+ + B^+)$ would be isomorphic to $(X, S+B)$,  which is impossible as the MMP has increased the discrepancies. If $K_{X^+}+\Delta^+$ is relatively numerically trivial, then we claim that $K_{X^+}+\Delta^+ \sim_{Z, \Q} 0$. Indeed,
\[
K_{X^+} + \Delta^+ \sim_{Z,\Q} \lambda \mu S^+,
\]
for $\lambda, \mu > 0$, and since $S^+$ intersects the exceptional locus, we must in fact have that $\Supp \mathrm{Exc}(\pi^+) \subseteq S^+$. By \cite[Proposition 2.20]{CT17}, it is thus enough to show that $K_{\tilde S^+} + \Delta_{\tilde S^+} = (K_{X^+} + \Delta^+)|_{\tilde S^+}$ is semiample, where $\tilde S^+ \to S^+$ is the normalisation of $S^+$, which in turn follows from \cite[Theorem 1.1]{tanakaimperfect}. Here, we used the fact that $\tilde S^+ \to S^+$ is a universal homeomorphism (see \cite[Theorem 1.2]{HW18}). As a consequence, $S^+$ descends to $Z$. This is impossible as its image (equal to the image of $S$) in $Z$ is not $\Q$-Cartier. 

Therefore, $K_{X^+} + \Delta^+$ is relatively ample, and so
$X^+ \to Z$ is the flip of $X\to Z$ by \cite[Corollary 6.4]{km98}.
\end{proof}

\subsection{The proof of Theorem \ref{theorem:flips}}

Given Theorem \ref{theorem:almost_flips}, the following proof follows the same strategy as in \cite[Theorem 6.3]{birkar13}. For the convenience of the reader, we provide a brief sketch of Birkar's argument in the projective case.
\begin{proof}[Proof of Theorem \ref{theorem:flips}]
First, we can assume that every component $S$ of $\Supp \Delta$ is relatively antiample. Further, let $\zeta(\Delta)$ be the number of components of $\Delta$ with coefficients not in the set  $\Gamma := \{1\} \cup \{1 - \frac{1}{n} \mid n >0 \}$. If $\zeta(\Delta)=0$, then the flip exists by Theorem \ref{theorem:almost_flips}. By induction, we can assume that the flip exists for all flipping contractions of log pairs $(X', \Delta')$ such that $\zeta(\Delta') < \zeta(\Delta)$.

By replacing $\Delta$ with $\Delta - \frac{1}{l}\lfloor \Delta \rfloor$ for $l\gg 0$, we can assume that $(X,\Delta)$ is klt without changing $\zeta(\Delta)$. Write $\Delta = aS + B$, where $S \not \subseteq \Supp B$ and $a \not \in \Gamma$. Let $\pi \colon W \to X$ be a log resolution of $(X,S+B)$ with exceptional divisor $E$ and set $B_W := \pi^{-1}_*B + E$. Since $K_X + \Delta \equiv_Z \mu S$ for some $\mu > 0$, we have that
\begin{align*}
K_W+S_W+B_W &= \pi^*(K_X+\Delta) + (1-a)S_W + F\\
 			&\equiv_Z (1-a+\mu)S_W + F',
\end{align*}
where $S_W := \pi^{-1}_*S$, and $F$, $F'$ are effective $\Q$-divisors  exceptional over $X$.

Run a $(K_W+S_W+B_W)$-MMP over $Z$. By induction all flips exist in this MMP  as $\zeta(S_W+B_W)<\zeta(\Delta)$. Moreover, by the above equation, every extremal ray is negative on $(1-a+\mu)S_W + F'$ and hence on an irreducible component of $\lfloor S_W + B_W \rfloor$. In particular, all contractions exist by \cite[Theorem 1.2 and Proposition 2.6]{HW18}. The cone theorem is valid by a result of Keel (see e.g.\ \cite[Theorem 2.4]{HW18}). Let $h \colon W \dasharrow Y$ be an output of this MMP and let $S_Y$, $B_Y$, and $F_Y$ be the strict transforms of $S_W$, $B_W$, and $F$, respectively. 

Now, run a $(K_Y + aS_Y + B_Y)$-MMP over $Z$ with scaling of $(1-a)S_Y$. In particular, if $R$ is an extremal ray, then $R \cdot S_Y > 0$ and 
\[
(K_Y+B_Y)\cdot R < 0.
\]
As $\zeta(B_Y) < \zeta(\Delta)$, all the flips in this MMP exist by induction. By the same argument as in the above paragraph, the cone theorem is valid in this setting and all contractions exist. Let $(X^+, aS^+ + B^+)$ be an output of this MMP. We claim that this is the flip of $(X,aS+B)$.

To this end, we notice that the negativity lemma applied to a common resolution $\pi_1 \colon W' \to X$ and $\pi_2 \colon W' \to X^+$ implies that
\[
\pi_1^*(K_X+aS+B) - \pi_2^*(K_{X^+} + aS^+ + B^+) \geq 0.
\]       
Since $(X,aS+B)$ is klt, this shows that $\lfloor B^+ \rfloor = 0$ and all the divisors in $E$ were contracted. In particular, $X \dashrightarrow X^+$ is an isomorphism in codimension one. We claim that $K_{X^+}+aS^++B^+$ is relatively ample over $Z$ and so $(X^+,aS^++B^+)$ is the flip of $X$. 

To this end, we note that $\rho(X^+/Z)=1$ (cf.\ \cite[Lemma 1.6]{VHK7}). Indeed, 
\[
\rho(W/X^+) + \rho(X^+/Z) = \rho(W/Z) = \rho(W/X) + \rho(X/Z)
\]
and $\rho(W/X)=\rho(W/X^+)$ is equal to the number of exceptional divisors. Since $\rho(X/Z)$ is equal to one, so is $\rho(X^+/Z)$. In particular, to conclude the proof of the theorem it is enough to show that $K_{X^+}+aS^++B^+$ cannot be relatively numerically trivial over $Z$. Assume by contradiction, that it is relatively numerically trivial. Then
\[
\pi_1^*(K_X+aS+B) - \pi_2^*(K_{X^+} + aS^+ + B^+)
\]   
is exceptional and relatively numerically trivial over $X$. Thus, it is empty by the negativity lemma which contradicts the fact that it is exceptional and non-numerically trivial over $Z$. 
\end{proof}

Theorem \ref{thm:minimal_models}, Theorem \ref{thm:bpf}, and Theorem \ref{thm:cone} now follow by exactly the same proof as \cite[Theorem 1.5 and 1.7]{BW14}, \cite[Theorem 1.2]{BW14}, and \cite[Theorem 1.1]{BW14}, respectively.

\section*{Acknowledgements}{}
We would like to thank Paolo Cascini, James M$^{c}$Kernan, Karl\linebreak Schwede, and Hiromu Tanaka for comments and helpful suggestions.

The first author was partially supported by NSF research grants no: DMS-1300750, DMS-1840190, DMS-1801851 and by a grant
from the Simons Foundation; Award Number: 256202. He would also like
to thank the Mathematics Department and the Research Institute for Mathematical Sciences,
located Kyoto University and the Mathematical Sciences Research Institute in Berkeley were some of this research was conducted. The second author was supported by the Engineering and Physical Sciences Research Council [EP/L015234/1] during his PhD at Imperial College London, by the National Science Foundation under Grant No.\ DMS-1638352 at the Institute for Advanced Study in Princeton, and by the National Science Foundation under Grant No.\ DMS-1440140 while the author was in residence at the Mathematical Sciences Research Institute in Berkeley, California, during the Spring 2019 semester.

\bibliographystyle{amsalpha}
\bibliography{final}

\end{document}